\theoremstyle{plain}
\newtheorem{theorem}{Theorem}
\newtheorem{lemma}[theorem]{Lemma}
\newtheorem{prop}[theorem]{Proposition}
\newtheorem{corollary}[theorem]{Corollary}
\theoremstyle{definition}
\newtheorem{definition}[theorem]{Definition}
\theoremstyle{remark}
\newtheorem{remark}[theorem]{Remark}
\newtheorem{claim}{Claim}
\newcommand{\lab}{\left|}
\newcommand{\rab}{\right|}
\newcommand{\bdry}{\partial_\infty}
\newcommand{\lp}{\left(}
\newcommand{\rp}{\right)}
\newcommand{\e}{\epsilon}
\newcommand{\R}{\mathbb{R}}
\newcommand{\T}{\mathbb{T}}
\newcommand{\Sp}{\mathbb{S}}
\newcommand{\D}{\mathbb{D}}
\newcommand{\Z}{\mathbb{Z}}
\newcommand{\N}{\mathbb{N}}
\newcommand{\cl}{\overline}
\newcommand{\ti}{\textit}
\DeclareMathOperator{\dist}{\textup{\text{dist}}}
\DeclareMathOperator{\diam}{\textup{\text{diam}}}
\DeclareMathOperator{\id}{\textup{\text{id}}}
\DeclareMathOperator{\Vol}{\textup{\text{Vol}}}
\DeclareMathOperator{\inter}{\textup{\text{int}}}
\DeclareMathOperator{\Ner}{\textup{\text{Ner}}}
\DeclareMathOperator{\conv}{\textup{\text{conv}}}
\DeclareMathOperator{\bc}{\textup{\text{bc}}}
\DeclareMathOperator{\const}{\textup{\text{const}}}
\DeclareMathOperator{\ALC}{\textup{\text{ALC}}}
\DeclareMathOperator{\LLC}{\textup{\text{LLC}}}
\numberwithin{equation}{section}
\numberwithin{theorem}{section}
\begin{document}

\title[Discrete length-volume inequalities]{Discrete length-volume inequalities and lower volume bounds in metric spaces}
\author{Kyle Kinneberg}

\subjclass[2010]{Primary: 52C17; Secondary: 30L99}
\date{\today}

\maketitle

\begin{abstract}
A theorem of W. Derrick ensures that the volume of any Riemannian cube $([0,1]^n,g)$ is bounded below by the product of the distances between opposite codimension-1 faces. In this paper, we establish a discrete analog of Derrick's inequality for weighted open covers of the cube $[0,1]^n$, which is motivated by a question about lower volume bounds in metric spaces. Our main theorem generalizes a previous result of the author in \cite{Kin14}, which gave a combinatorial version of Derrick's inequality and was used in the analysis of boundaries of hyperbolic groups. As an application, we answer a question of Y. Burago and V. Zalgaller about length-volume inequalities for pseudometrics on the unit cube.
\end{abstract}

\section{Introduction} \label{introsec}
There is a deep and well-studied relationship in metric geometry between the volume of a space and the lengths of curves that, in some way, generate it. An early example of such a relationship is due to K. Loewner (unpublished, but see \cite{Pu52} for a discussion) and deals with conformal structures on the torus $\T^2$.

\begin{theorem} \label{Loewner}
Let $(\T^2, g)$ be the 2-dimensional torus, equipped with a Riemannian metric $g$, let $\ell(g)$ denote the infimal length of a closed curve on $\T^2$ that is not homotopically trivial, and let $\Vol(g)$ denote the volume of $\T^2$ with respect to the metric $g$. Then $\Vol(g) \geq \tfrac{\sqrt{3}}{2}\ell(g)^2$,
and equality holds if and only if $(\T^2,g)$ is isometric to the flat torus $\R^2/\Lambda$, where $\Lambda$ is the lattice generated by $(1,0)$ and $(1/2,\sqrt{3}/2)$.
\end{theorem}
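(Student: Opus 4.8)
The plan is to reduce to the flat case using the uniformization theorem and then to run a Fubini--Cauchy--Schwarz argument along a foliation of the torus by parallel closed geodesics. Since every conformal structure on $\T^2$ is that of a complex torus $\C/\Lambda$, the conformal class of $g$ contains a flat metric: write $\T^2 = \R^2/\Lambda$ with $g_0$ the induced flat metric and $g = f^2 g_0$ for a positive smooth function $f$ on $\T^2$. After rescaling $g_0$, assume that the shortest nonzero vector of $\Lambda$ has length $1$, and fix such a vector $v_1 \in \Lambda$; put $A_0 = \Vol(\T^2, g_0)$, the covolume of $\Lambda$. Since $v_1$ is a shortest vector, the $g_0$-length of the shortest noncontractible closed curve equals $1$, realized by the straight closed geodesic in direction $v_1$, and one has $\Vol(g) = \int_{\T^2} f^2 \, dA_0$.

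The next step is to set up the foliation. Choosing Euclidean coordinates $(x,y)$ aligned so that $v_1 = (1,0)$, the lines $\{y = t\}$ descend to closed $g_0$-geodesics $\gamma_t$ of $g_0$-length $1$, and as $t$ ranges over an interval of length $A_0$ these leaves tile $\T^2$ exactly once (this uses $v_1 \in \Lambda$, that $v_1$ is primitive, and $A_0 = \det[v_1, v_2]$ for a second generator $v_2$). Since each $\gamma_t$ is noncontractible, $\ell(g) \leq L_g(\gamma_t) = \int_{\gamma_t} f \, dx$; integrating in $t$ and applying Fubini on a sheared fundamental domain (using periodicity of $f$) gives
\[
\ell(g) \, A_0 \;\leq\; \int_{\T^2} f \, dA_0 .
\]
By Cauchy--Schwarz, $\left( \int_{\T^2} f \, dA_0 \right)^2 \leq A_0 \int_{\T^2} f^2 \, dA_0 = A_0 \, \Vol(g)$, and combining the two estimates yields $\Vol(g) \geq A_0 \, \ell(g)^2$.

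It then remains to bound $A_0$ from below using only that $1$ is the length of a shortest lattice vector. Writing a second generator as $v_2 = (a,b)$ with $b > 0$ and, after subtracting a multiple of $v_1 = (1,0)$, with $|a| \leq \tfrac12$, minimality of $v_1$ forces $a^2 + b^2 = |v_2|^2 \geq 1$, hence $b^2 \geq \tfrac34$ and $A_0 = b \geq \tfrac{\sqrt3}{2}$; this is Hermite's inequality in dimension two, with equality precisely when $a = \pm\tfrac12$ and $b = \tfrac{\sqrt3}{2}$, i.e.\ when $\Lambda$ is hexagonal. Therefore $\Vol(g) \geq \tfrac{\sqrt3}{2} \ell(g)^2$. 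For the equality statement I would trace the three inequalities back: equality in Cauchy--Schwarz forces $f$ to be constant, so $g$ is homothetic to $g_0$; equality $A_0 = \tfrac{\sqrt3}{2}$ forces $\Lambda$ to be hexagonal; and conversely every rescaled flat hexagonal torus attains equality, since then all leaves $\gamma_t$ have the same $g$-length, which is exactly the systole. The genuinely nontrivial ingredient here is the uniformization theorem; after that, the points requiring care are the bookkeeping in the foliation/Fubini step and verifying that the three inequalities are simultaneously sharp only for the flat hexagonal torus (up to rescaling), which is the lattice $\R^2/\Lambda$ of the statement.
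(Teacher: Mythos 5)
The paper does not actually prove Theorem~\ref{Loewner}; it only states it as background motivation and refers the reader to~\cite{Pu52}, so there is no in-paper proof to compare against. On its own merits, your argument is correct and is the standard proof of Loewner's inequality: uniformize to write $g=f^2 g_0$ with $g_0$ flat on $\R^2/\Lambda$ (normalizing the shortest lattice vector to length~$1$, so that the flat systole equals~$1$), foliate by horizontal translates of that shortest closed geodesic, integrate and use Fubini on the rectangular fundamental domain $[0,1)\times[0,A_0)$ to get $\ell(g)\,A_0\le\int f\,dA_0$, apply Cauchy--Schwarz to get $\Vol(g)\ge A_0\,\ell(g)^2$, and finally invoke Hermite's bound $A_0\ge\tfrac{\sqrt3}{2}$ from the reduction $|a|\le\tfrac12$, $a^2+b^2\ge1$. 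The equality analysis is also handled correctly: equality in Cauchy--Schwarz forces $f$ constant, equality in Hermite forces the hexagonal lattice, and you rightly note that the conclusion is ``isometric up to scaling'' to the hexagonal flat torus (the theorem's phrasing silently assumes the systole is normalized, which your homothety remark accounts for). The only genuinely nontrivial external ingredient, as you say, is the uniformization theorem for tori; everything else is elementary and the Fubini bookkeeping on the sheared quotient is done carefully enough.
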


Loewner's inequality is only the beginning of a very rich body of work that has sought to understand similar phenomena for more general spaces and in dimensions greater than two. We refer to \cite[Chapter 4]{Grom99} for a broad survey of methods and results in this area. Of particular interest to us is the following theorem, originally proved by W. Derrick \cite[Theorem 3.4]{Der69}. Here we state it in the form cited in \cite{Grom99}.

\begin{theorem} \label{Derrick}
Let $([0,1]^n,g)$ be the $n$-dimensional unit cube, equipped with a Riemannian metric $g$. Let $F_k$ and $F_k'$ denote the pairs of opposite codimension-1 faces of $[0,1]^n$, for $1 \leq k \leq n$, and let $d_k$ be the distance between $F_k$ and $F_k'$ with respect to the metric determined by $g$. Then $\Vol(g) \geq d_1\cdots d_n$.
\end{theorem}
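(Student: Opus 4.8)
The plan is to prove Derrick's inequality by a degree-theoretic/coarea argument. First I would use the distance functions to build a candidate map onto the cube. For each $k$, let $u_k \colon [0,1]^n \to [0, d_k]$ be the function $u_k(x) = \dist_g(x, F_k)$, which is $1$-Lipschitz with respect to the Riemannian distance $d_g$; note $u_k$ vanishes on $F_k$ and, since any path from $F_k$ to $F_k'$ has $g$-length at least $d_k$, we have $u_k \equiv d_k$ on $F_k'$ (more precisely $u_k \geq d_k$ there, and we can cap it off). Assembling these, set $\Phi = (u_1/d_1, \ldots, u_n/d_n) \colon [0,1]^n \to [0,1]^n$. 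The key topological point is that $\Phi$ restricted to the boundary $\partial [0,1]^n$ is homotopic (through boundary-preserving maps) to the identity, because $\Phi$ sends $F_k$ into $\{x_k = 0\}$ and $F_k'$ into $\{x_k = 1\}$; hence $\Phi$ has degree $1$ and is in particular surjective.

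Next I would bring in the area formula / change of variables. Since each $u_k$ is Lipschitz, $\Phi$ is Lipschitz, so by Rademacher's theorem it is differentiable almost everywhere and the area formula applies:
\begin{equation*}
\int_{[0,1]^n} |\mathrm{Jac}_g \Phi|\, d\vol_g \;\geq\; \Vol_{\mathrm{Leb}}(\Phi([0,1]^n)) \;=\; \Vol_{\mathrm{Leb}}([0,1]^n) \;=\; 1,
\end{equation*}
where the first inequality uses surjectivity (degree $1$) and the Euclidean volume on the target, and $\mathrm{Jac}_g \Phi$ denotes the Jacobian of $\Phi$ measured with the metric $g$ in the domain and the standard metric in the target. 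So it remains to show the pointwise bound $|\mathrm{Jac}_g \Phi(x)| \leq \frac{1}{d_1 \cdots d_n}$ almost everywhere, i.e. $|\mathrm{Jac}_g(u_1, \ldots, u_n)(x)| \leq 1$.

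The pointwise Jacobian estimate is the main obstacle, and it is a linear-algebra fact: at a point of differentiability, each $du_k$ has dual norm (with respect to $g$) at most $1$, since $u_k$ is $1$-Lipschitz. The Jacobian of the map $(u_1, \ldots, u_n)$ at $x$ — computed as the volume-distortion factor from $(T_x [0,1]^n, g_x)$ to $\R^n$ with its standard inner product — equals $|du_1 \wedge \cdots \wedge du_n|_{g_x}$, the norm of this $n$-covector in the exterior algebra induced by $g_x$. By the Hadamard-type inequality for exterior products, $|du_1 \wedge \cdots \wedge du_n|_{g_x} \leq |du_1|_{g_x} \cdots |du_n|_{g_x} \leq 1$. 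Plugging this into the area-formula inequality gives $\Vol(g) = \int 1\, d\vol_g \geq d_1 \cdots d_n \int |\mathrm{Jac}_g \Phi|\, d\vol_g \geq d_1 \cdots d_n$, as desired. The points requiring care are: (i) justifying that the degree-$1$ / surjectivity argument is valid for a merely Lipschitz map (one can smooth $\Phi$ slightly, or invoke the version of the area formula with multiplicity and a degree argument valid for Lipschitz maps); and (ii) handling the mild issue that $u_k$ may exceed $d_k$ somewhere in the interior, which is fixed by replacing $u_k$ with $\min(u_k, d_k)$, still $1$-Lipschitz, without affecting the boundary behavior.
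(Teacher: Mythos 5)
Your argument follows essentially the same route as the paper's outline: you use the distance functions to the faces $F_k$, the Hadamard inequality on the exterior product of their differentials to bound the Jacobian by $1$, a coarea/area-formula step in place of the paper's $\int f^*\omega$ computation, and a topological surjectivity (degree) argument in place of the paper's Lemma~\ref{poly} to conclude that the image covers the appropriate rectangle. The cosmetic differences — normalizing and capping $u_k$ at $d_k$ so the target is $[0,1]^n$, and invoking a boundary homotopy to the identity rather than the Brouwer-type supporting-half-space lemma — do not change the substance of the proof.
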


Let us outline the main ideas in the proof of Derrick's inequality; we will see them reappear in combinatorial form later in this paper. For each $k$, let $f_k \colon [0,1]^n \rightarrow \R$ be defined by $f_k(x) = \dist_{g}(x,F_k)$, where $\dist_{g}$ denotes the distance with respect to the metric determined by $g$. It is clear that $f_k(F_k) = \{0\}$ for each $k$, and the definition of $d_k$ ensures that $f_k(F_k') \subset [d_k,\infty)$. Moreover, each $f_k$ is 1-Lipschitz. For simplicity, let us assume that $f_k$ is smooth (in reality, one would approximate $f_k$ by smooth functions that are $(1+\e)$-Lipschitz), so the differential satisfies $||df_k || \leq 1$. Now define $f \colon [0,1]^n \rightarrow \R^n$ by $f(x) = (f_1(x),\ldots,f_n(x))$. If $\omega = dy_1 \wedge \cdots \wedge dy_n$ denotes the canonical volume form on $\R^n$, then its pull-back is $f^*\omega = df_1 \wedge \cdots \wedge df_n$. Hadamard's inequality then gives 
$$||f^*\omega|| \leq ||df_1|| \cdots ||df_n|| \leq 1.$$
Letting $\nu$ denote the Riemannian volume form on $([0,1]^n,g)$, we have
\begin{equation} \label{volg}
\Vol(g) \geq \int_{[0,1]^n} ||f^*\omega|| d\nu \geq  \lab \int_{[0,1]^n} f^*\omega \rab \geq \int_{f([0,1]^n)} \omega.
\end{equation}
As $f_k(F_k) = \{0\}$ and $f_k(F_k') \subset [d_k,\infty)$ for each $k$, standard topological arguments ensure that the rectangle $[0,d_1] \times \cdots \times [0,d_n]$ is contained in $f([0,1]^n)$ (see Lemma \ref{poly} below). Consequently, the product $d_1 \cdots d_n$ is a lower bound for the right-hand side of \eqref{volg}, and this gives the desired inequality.

In \cite[Section 4.3]{Kin14}, the present author proved a combinatorial version of Derrick's inequality for open covers of the unit cube $[0,1]^n$. This result was used in the construction of a metric with certain regularity properties on the boundary of a Gromov hyperbolic metric space. The set-up was as follows. Let $\mathcal{U} = \{U_i\}_{i \in I}$ be an open cover of $[0,1]^n$, and again let $F_k$ and $F_k'$ denote the pairs of opposite codimension-1 faces. We say that $U_{i_1},\ldots,U_{i_m}$ is a \ti{chain} if $U_{i_j} \cap U_{i_{j+1}} \neq \emptyset$ for each $j$. Moreover, such a chain is said to connect two sets $A$ and $B$ if $U_{i_1} \cap A \neq \emptyset$ and $U_{i_m} \cap B \neq \emptyset$.

\begin{theorem}[{\cite[Proposition 4.4]{Kin14}}] \label{combcube}
Let $d_k$ denote the smallest number of sets $U_i$ in a chain that connects $F_k$ and $F_k'$. Then $\#\mathcal{U} \geq d_1\cdots d_n$.
\end{theorem}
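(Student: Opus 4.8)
The plan is to prove the combinatorial shadow of the proof of Derrick's inequality recalled above: the chain‑distance functions play the role of the $f_k$, the partition‑of‑unity map built from them plays the role of $f$, and—because the chain distances are integer‑valued—the volume estimate is replaced by a count of lattice points.

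I would begin with the set‑up. If $\mathcal{U}$ is infinite then $\#\mathcal{U}=\infty$ and there is nothing to prove, so assume $\mathcal{U}$ finite; empty members play no part in chains and only decrease $\#\mathcal{U}$, so assume each $U_i\neq\emptyset$. For $1\le k\le n$ let $\phi_k(i)$ be the least number of sets in a chain $U_{j_1},\dots,U_{j_m}=U_i$ with $U_{j_1}\cap F_k\neq\emptyset$. Since $[0,1]^n$ is connected, any two members of $\mathcal{U}$ are joined by a chain, so each $\phi_k(i)$ is finite and $d_k<\infty$. Three facts follow at once: $\phi_k(i)=1$ if $U_i\cap F_k\neq\emptyset$; $\phi_k(i)\ge d_k$ if $U_i\cap F_k'\neq\emptyset$ (the chain realizing $\phi_k(i)$ then also connects $F_k$ to $F_k'$); and $|\phi_k(i)-\phi_k(j)|\le1$ if $U_i\cap U_j\neq\emptyset$ (append one set to a chain). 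Fix a partition of unity $\{\rho_i\}_{i\in I}$ subordinate to $\mathcal{U}$ (possible since $[0,1]^n$ is normal and $\mathcal{U}$ is finite) and set $h=(h_1,\dots,h_n)\colon[0,1]^n\to\R^n$, $h_k(x)=\sum_{i}\rho_i(x)\,\phi_k(i)$—the analogue of the map $f$ in Derrick's argument. Since $\rho_i(x)>0$ forces $x\in U_i$, the first two facts give $h_k\equiv1$ on $F_k$ and $h_k\ge d_k$ on $F_k'$.

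Writing $\Phi=(\phi_1,\dots,\phi_n)\colon I\to\Z^n$, the goal is the inclusion $\Phi(I)\supseteq\{1,\dots,d_1\}\times\cdots\times\{1,\dots,d_n\}$, which (as $\Phi(i)$ depends only on the set $U_i$) gives $\#\mathcal{U}\ge\#\Phi(I)\ge d_1\cdots d_n$. So fix a target $(a_1,\dots,a_n)$ with $1\le a_k\le d_k$. Applying Lemma~\ref{poly}—concretely, the Poincar\'e--Miranda theorem—to $x\mapsto h(x)-(a_1,\dots,a_n)$, whose $k$‑th coordinate is $\le0$ on $F_k$ and $\ge0$ on $F_k'$, produces a point $x^\ast$ with $h(x^\ast)=(a_1,\dots,a_n)$. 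For each $k$, then, $a_k=\sum_i\rho_i(x^\ast)\phi_k(i)$ is a convex combination of the integers $\{\phi_k(i):\rho_i(x^\ast)>0\}$; the corresponding $U_i$ all contain $x^\ast$, so they pairwise intersect and by the third fact those integers lie in $\{\ell,\ell+1\}$ for a single integer $\ell$—and a convex combination of numbers in $\{\ell,\ell+1\}$ equals the integer $a_k$ only if each of them equals $a_k$. Hence any index $i_0$ with $\rho_{i_0}(x^\ast)>0$ satisfies $\phi_k(i_0)=a_k$ for every $k$, i.e.\ $\Phi(i_0)=(a_1,\dots,a_n)$; this proves the inclusion, and the theorem.

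The step I expect to need the most care is the appeal to Lemma~\ref{poly}: one must be sure the version used really supplies, for \emph{every} integer vector of $\prod_k[1,d_k]$, a preimage under $h$—including the endpoints $a_k\in\{1,d_k\}$ and the case $d_k=1$, where the inequalities on $F_k$ and $F_k'$ hold with equality. Phrasing the step as finding a zero of $h-(a_1,\dots,a_n)$, rather than as producing a non‑degenerate box in the image of $h$, is precisely what makes this uniform; it is also worth noting that this is the one place where the integrality of the $\phi_k$ is used in an essential way (through the rigidity of the convex combination). The remaining ingredients—existence of the subordinate partition of unity, finiteness of $I$, and the elementary convex‑combination rigidity—are routine.
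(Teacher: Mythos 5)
Your proof is correct and takes a genuinely different route from the paper's. The paper obtains Theorem \ref{combcube} as the $w\equiv 1$ specialization of Theorem \ref{LV}, whose proof is geometric: each $U_i$ is assigned an $n$-rectangle $R_i\subset\R^n$ positioned by chain distances, a map $[0,1]^n\to\bigcup_i R_i$ is built by passing through the nerve $\Ner(\mathcal{U})$ and its barycentric subdivision, Lemma \ref{poly} shows the image contains $\prod_k[0,d_k]$, and the bound follows by comparing Lebesgue volumes. Your argument bypasses the nerve and the rectangles entirely: you interpolate the integer-valued chain-distance functions $\phi_k$ with a partition of unity, use a degree argument to locate a preimage of each lattice point of $\{1,\dots,d_1\}\times\cdots\times\{1,\dots,d_n\}$ under $h=\sum_i\rho_i\Phi(i)$, and then exploit the rigidity of a convex combination of two consecutive integers (forced by the $1$-Lipschitz property of $\phi_k$ along incidences) to push each lattice point back to an index $i_0$ with $\Phi(i_0)=a$, giving $\#\mathcal{U}\ge\#\Phi(I)\ge d_1\cdots d_n$. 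This is shorter and more elementary, and --- as you observe --- phrasing the topological step as finding a single zero rather than a full-dimensional box handles $d_k=1$ uniformly. The trade-off is that the convex-combination step is an integrality argument, so it does not extend to the weighted Theorems \ref{singleweight} and \ref{LV}; that is precisely why the paper builds the rectangle machinery. One small precision on the step you flagged: Lemma \ref{poly} applied literally to $P=[0,1]^n$ and $f=h-a$ would require $(h-a)_k\ge 1$ on $F_k'$ (so that $f(x)$ lies in the supporting half-space $\pi_k^{-1}([1,\infty))$), whereas you only have $\ge 0$; the statement you actually invoke is the Poincar\'e--Miranda theorem, which is equivalent to Brouwer and can be deduced from Lemma \ref{poly} by a perturbation and limiting argument, but is not a direct instance of it.
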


Note that although this result is analogous to Derrick's theorem, it does not parallel the Riemannian inequality. Indeed, the sets $U_i$ in $\mathcal{U}$ are essentially treated as if they all had diameter 1. The primary purpose of this paper is to extend the preceding result to a weighted version, which is closer to Theorem \ref{Derrick} and also generalizes Theorem \ref{combcube}. 

To this end, let $\mathcal{U} = \{U_i\}_{i \in I}$ be an open cover of $[0,1]^n$ as before, and let $w \colon I \rightarrow [0,\infty)$ be a corresponding weight function. One should think of $w(i)$ as the weight associated to the set $U_i$. Together, $\mathcal{U}$ and $w$ give us a discrete notion of distance on $[0,1]^n$. Namely, we define
$$\dist_w(A,B) = \inf \left\{ \sum_{j=1}^m w(i_j) : 
\begin{array}{l}
U_{i_1},\ldots,U_{i_m} \text{ is a chain} \\
\text{that connects } A \text{ and } B
\end{array} \right\}.$$
The sum $w(i_1) + \ldots + w(i_m)$ is said to be the length of the corresponding chain. By path connectedness of $[0,1]^n$ and compactness of paths, it is easy to see that any two points in the cube can be connected by a chain. In particular, $\dist_w(A,B)$ is finite for $A,B \subset [0,1]^n$. We should note that a chain might be disconnected topologically, as we have made no assumption on the connectedness of the sets in $\mathcal{U}$. Our main theorem in this paper is the following.

\begin{theorem} \label{singleweight}
Let $\mathcal{U}$ be an open cover of $[0,1]^n$, let $w$ be a corresponding weight function, and let $d_k = \dist_w(F_k,F_k')$ for each $1\leq k \leq n$. Then 
$$\sum_{i \in I} w(i)^n \geq d_1 \cdots d_n.$$
\end{theorem}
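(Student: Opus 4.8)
My plan is to mimic the proof of Derrick's inequality, replacing the smooth $1$-Lipschitz functions $f_k(x)=\dist_g(x,F_k)$ by the discrete distance functions built from the cover and the weight. Concretely, for each $k$ define $g_k\colon[0,1]^n\to\R$ by $g_k(x)=\dist_w(F_k,\{x\})$. By definition $g_k$ vanishes on $F_k$ and takes values $\geq d_k$ on $F_k'$. The key elementary observation is a discrete Lipschitz estimate: if $x,y$ lie in a common set $U_i$ of the cover, then $|g_k(x)-g_k(y)|\leq w(i)$, since one can extend any chain realizing $g_k$ near $x$ by appending $U_i$. So $g_k$ is ``$w$-Lipschitz'' with respect to the cover. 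Then I would form $g=(g_1,\dots,g_n)\colon[0,1]^n\to\R^n$ and, as in \eqref{volg}, argue via Lemma \ref{poly} that the image $g([0,1]^n)$ contains the box $[0,d_1]\times\cdots\times[0,d_n]$, so that $g$ is surjective onto this box of volume $d_1\cdots d_n$.

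The heart of the matter is to replace the integral-geometric inequality $\Vol(g)\geq\int\|f^*\omega\|$ by a \emph{counting} inequality: I want to show that $\sum_{i\in I}w(i)^n$ is at least the $n$-dimensional Lebesgue measure (or Hausdorff content) of $g([0,1]^n)$. To do this I would first replace the open cover by a finite subcover (using compactness of $[0,1]^n$), so the sum is finite and we may assume $I$ is finite. Then the point is that $g$ maps each $U_i$ into a set of small diameter: by the discrete Lipschitz estimate, $\diam g(U_i)\leq w(i)$ in the $\ell^\infty$ metric on $\R^n$ (each coordinate function oscillates by at most $w(i)$ on $U_i$), hence $g(U_i)$ is contained in an axis-parallel cube of side $w(i)$ and Lebesgue measure $w(i)^n$. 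Since the $U_i$ cover $[0,1]^n$, the sets $g(U_i)$ cover $g([0,1]^n)\supset[0,d_1]\times\cdots\times[0,d_n]$, and subadditivity of Lebesgue measure gives
$$\sum_{i\in I}w(i)^n\;\geq\;\sum_{i\in I}\Vol\big(g(U_i)\big)\;\geq\;\Vol\Big(\bigcup_{i\in I}g(U_i)\Big)\;\geq\;\Vol\big([0,d_1]\times\cdots\times[0,d_n]\big)=d_1\cdots d_n.$$

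The step I expect to be most delicate is the topological claim that $g([0,1]^n)$ covers the box $[0,d_1]\times\cdots\times[0,d_n]$. The functions $g_k$ are continuous (indeed $w$-Lipschitz functions on a path-connected compact space are continuous, provided one checks continuity carefully near points where the local cover sets are small), but they need not be ``honest'' coordinate-type functions, so one must invoke a degree-theoretic or Poincaré--Miranda-type argument — precisely the content of Lemma \ref{poly} referenced in the excerpt, applied with the target rectangle $[0,d_1]\times\cdots\times[0,d_n]$ and the boundary conditions $g_k|_{F_k}\equiv 0$, $g_k|_{F_k'}\geq d_k$. A secondary technical point is justifying that one may pass to a finite subcover without changing (or only decreasing) the quantities $d_k$; in fact shrinking the cover can only increase each $d_k$, which is the favorable direction, so it suffices to prove the inequality for finite covers. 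If the weight function is not bounded this causes no trouble since after passing to a finite subcover the sum $\sum_i w(i)^n$ is automatically finite (and if it is infinite there is nothing to prove). Finally, one should handle the degenerate case where some $w(i)=0$: such sets can be absorbed into neighbors or simply contribute $0$ to the sum and $0$ to the diameter bound, so the argument goes through unchanged.
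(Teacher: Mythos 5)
Your plan has the right overall shape --- build $n$ discrete distance functions, show by Lemma \ref{poly} that the resulting map covers the box $[0,d_1]\times\cdots\times[0,d_n]$, and then bound the volume of the image by $\sum_i w(i)^n$ using subadditivity --- but there is a genuine gap at the continuity step, and without it the topological claim is actually false, not merely unproven.

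The function $g_k(x)=\dist_w(F_k,\{x\})$ is in general not continuous. Your discrete Lipschitz bound $|g_k(x)-g_k(y)|\leq w(i)$ for $x,y\in U_i$ is correct, but it only controls the \emph{oscillation} of $g_k$ on $U_i$ by $w(i)$, and $w(i)$ need not be small; the bound permits jumps of size up to $w(i)$. Concretely, take the trivial cover $\mathcal{U}=\{[0,1]^n\}$ with a single weight $w>0$. Then $g_k$ equals $0$ on $F_k$ and equals $w$ everywhere else, so each $g_k$ jumps at $F_k$; the image of $g=(g_1,\dots,g_n)$ is only the finite set of $2^n$ vertices of $[0,w]^n$, and so $g([0,1]^n)$ does not contain the box $[0,d_1]\times\cdots\times[0,d_n]=[0,w]^n$. (A one-dimensional cover by $U_1=[0,0.6)$ and $U_2=(0.4,1]$ with $w\equiv 1$ shows such jumps can also occur away from the faces.) In fact $g_k$ is only upper semicontinuous, so Lemma \ref{poly}, which requires continuity, cannot be invoked, and the conclusion $g([0,1]^n)\supset\prod_k[0,d_k]$ fails.

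This is precisely the obstacle the paper's machinery is built to overcome, and it takes more than the raw distance functions. The paper associates to each $U_i$ a rectangle $R_i=\prod_k[d_k(i),d_k(i)+w(i)]$ of side $w(i)$, where $d_k(i)$ is essentially the discrete distance from $F_k$ to $U_i$; it proves that nonempty intersections among the $U_i$ force nonempty intersections among the $R_i$; and it then constructs a genuinely continuous map $f=\psi\circ\phi\colon[0,1]^n\to\bigcup_i R_i$, where $\phi$ is the Lipschitz partition-of-unity map into the nerve $\Ner(\mathcal{U})$ and $\psi$ is affine on the first barycentric subdivision, sending each barycenter to a carefully chosen corner of $R_{i_0}\cap\cdots\cap R_{i_j}$. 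Only then does one get the boundary conditions $f(F_k)\subset\{x_k=0\}$ and, after a reduction to a ``non-spanning'' cover, $f(F_k')\subset\{x_k\geq d_k\}$ needed to apply Lemma \ref{poly}; your final subadditivity step is then exactly the one the paper uses. In short, the nerve and partition of unity supply the continuous interpolation between cover cells that the pointwise distance functions $g_k$ lack, and that interpolation is where the real content of the proof lies.
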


In fact, we will prove a more general version of this inequality that has more in common with the results of Derrick in \cite{Der71} and \cite{Der68}: we allow the discrete distance between $F_k$ and $F_k'$ to be taken with respect to (possibly) different weight functions for different values of $k$.

\begin{theorem} \label{LV}
Let $\mathcal{U}$ be an open cover of $[0,1]^n$, and let $w_k$ be associated weight functions for $1\leq k \leq n$. If $d_k = \dist_{w_k}(F_k,F_k')$ for each $k$, then
$$\sum_{i \in I} \lp \prod_{k=1}^n w_k(i) \rp \geq d_1 \cdots d_n.$$
\end{theorem}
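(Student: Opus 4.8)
The plan is to mimic the proof of Derrick's inequality outlined in the introduction, replacing the Lipschitz functions $f_k$ by their discrete analogues built from the covering $\mathcal{U}$ and the weights $w_k$. For each $k$, I would define $g_k \colon [0,1]^n \to [0,\infty)$ by $g_k(x) = \dist_{w_k}(F_k, \{x\})$, the minimal $w_k$-length of a chain connecting the face $F_k$ to the point $x$. By construction $g_k$ vanishes on $F_k$ and, by the triangle inequality for chains, takes values in $[d_k,\infty)$ on $F_k'$. These functions are not continuous, but they are ``coarsely Lipschitz'' relative to $\mathcal{U}$: if $x,y$ lie in a common $U_i$, then any chain to $x$ can be extended by $U_i$ to reach $y$, so $|g_k(x) - g_k(y)| \le w_k(i)$. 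Setting $g = (g_1,\ldots,g_n) \colon [0,1]^n \to \R^n$, the image of a set $U_i$ under $g$ therefore has diameter at most $w_k(i)$ in the $k$-th coordinate, hence $g(U_i)$ is contained in a box of side lengths $w_1(i),\ldots,w_n(i)$, whose $n$-dimensional Lebesgue measure is $\prod_k w_k(i)$. Since the $U_i$ cover the cube, $g([0,1]^n) \subset \bigcup_i g(U_i)$, and so the Lebesgue measure of $g([0,1]^n)$ is at most $\sum_i \prod_k w_k(i)$ — this is the discrete replacement for the chain of inequalities \eqref{volg}.

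On the other hand, I need to show that the box $[0,d_1] \times \cdots \times [0,d_n]$ is contained in $g([0,1]^n)$, which would give $d_1\cdots d_n \le \Vol_n(g([0,1]^n)) \le \sum_i \prod_k w_k(i)$ and finish the proof. Here the obstacle is that $g$ is discontinuous, so the topological argument (Lemma \ref{poly}) cannot be applied to $g$ directly. The natural fix is to replace $g$ by a genuinely continuous map $h = (h_1,\ldots,h_n)$ with $h_k$ within $\max_i w_k(i)$ (or some controlled error) of $g_k$, still vanishing on $F_k$ and still $\ge d_k$ on $F_k'$ after a harmless rescaling, and such that $h(U_i)$ is still contained in a box of volume close to $\prod_k w_k(i)$. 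One clean way to do this: subordinate a partition of unity $\{\varphi_i\}$ to a finite subcover and set $h_k(x) = \sum_i \varphi_i(x)\, c_{k,i}$, where $c_{k,i}$ is a value of $g_k$ on $U_i$; then $h_k$ is continuous, and on each $U_i$ the value $h_k(x)$ is a convex combination of numbers $c_{k,j}$ with $U_j$ meeting $U_i$, all of which lie within $w_k(i) + w_k(j)$ of $c_{k,i}$. With a little care about how the subcover is chosen (e.g. refining so that the weights involved are small, or absorbing the error into a fixed $\e$), one gets that $h(U_i)$ lies in a box of volume $\prod_k (w_k(i) + \e_i)$, and that $h$ still separates $F_k$ from $F_k'$ at distance $d_k - \e$ in the $k$-th coordinate.

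With the continuous map $h$ in hand, Lemma \ref{poly} (the standard degree-theoretic argument that a map of the cube sending each pair of opposite faces to the correct sides of a slab is surjective onto the corresponding box) yields $[0, d_1 - \e] \times \cdots \times [0, d_n - \e] \subset h([0,1]^n) \subset \bigcup_i h(U_i)$. Taking Lebesgue measure gives $\prod_k (d_k - \e) \le \sum_i \prod_k (w_k(i) + \e_i)$, and letting the error parameters tend to $0$ yields $d_1\cdots d_n \le \sum_i \prod_k w_k(i)$, as desired. The main technical work — and the step I expect to require the most care — is the construction of the continuous approximation $h$ with simultaneous control of (i) the volume of each $h(U_i)$ and (ii) the separation of opposite faces; the compactness of $[0,1]^n$ allows one to pass to a finite subcover, but one must make sure the partition-of-unity averaging does not inflate the box sizes beyond $\prod_k w_k(i)$ by more than a controllable amount, which is where choosing the subcover appropriately (for instance via a Lebesgue-number/refinement argument) becomes essential.
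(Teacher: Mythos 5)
Your high-level outline is the right one and essentially matches the paper's strategy: set $g_k(x)=\dist_{w_k}(F_k,\{x\})$ (so that $g(U_i)$ sits in an axis-aligned box of side lengths $w_k(i)$ and volume $\prod_k w_k(i)$, exactly the rectangle $R_i$ the paper constructs), smooth $g$ to a continuous map, apply the degree-theoretic Lemma~\ref{poly} to cover $\prod_k[0,d_k]$, and compare volumes. The weak point is precisely the step you flag as ``the step I expect to require the most care,'' and the construction you propose for it does not work.

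The naive partition-of-unity average $h_k(x)=\sum_j \varphi_j(x)\,c_{k,j}$ has two defects that cannot be absorbed into a small $\e$. First, the side-length bound fails: for $x\in U_i$ the contributing $c_{k,j}$'s satisfy only $|c_{k,j}-c_{k,i}|\le w_k(i)+w_k(j)$, and the neighboring weights $w_k(j)$ are part of the data — they can be arbitrarily large, cannot be shrunk by passing to a subcover, and cannot be ``absorbed into a fixed $\e$.'' This is not merely an estimate one might tighten: the affine extension on the \emph{un}subdivided nerve can genuinely leave $\bigcup_i R_i$. For instance with $R_1=[0,10]\times[0,1]$, $R_2=[0,1]\times[0,10]$ (which intersect) and choices $c_1=(10,0)\in R_1$, $c_2=(0,10)\in R_2$, the segment $[c_1,c_2]$ passes through $(5,5)\notin R_1\cup R_2$. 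Second, the boundary conditions fail: on $F_k$ one gets $h_k(x)=\sum_j\varphi_j(x)c_{k,j}$ where each $c_{k,j}\in[0,w_k(j)]$ rather than $\{0\}$, so $h_k|_{F_k}$ can be as large as $\max_j w_k(j)$; and in the ``spanning'' case (some $U_i$ meeting both $F_k$ and $F_k'$) the $c_{k,j}$ appearing for $x\in F_k'$ may be small, so $h_k|_{F_k'}$ need not stay above $d_k-\e$.

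The paper's proof supplies exactly the two ideas your sketch is missing. It replaces the affine map on the nerve by an affine map on the \emph{first barycentric subdivision}: a subdivided $m$-simplex has vertices indexed by a flag $\{i_0\}\subset\{i_0,i_1\}\subset\cdots$, each vertex is sent to a point of $R_{i_0}\cap\cdots\cap R_{i_j}\subset R_{i_0}$, so the whole simplex lands inside the single rectangle $R_{i_0}$ — this is what makes $\Vol(f([0,1]^n))\le\sum_i\prod_k w_k(i)$ an honest inequality with no error terms. And it chooses the vertex images by a specific left-endpoint/right-endpoint rule (together with an explicit reduction to the non-spanning case by slightly enlarging the cube and padding the cover with small balls of weight $\e$) so that the image of $F_k$ lies exactly on $\{\pi_k=0\}$ and the image of $F_k'$ lies in $\{\pi_k\ge d_k\}$. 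Without both of these devices — the subdivision and the careful vertex rule plus non-spanning reduction — the volume and boundary estimates you need for Lemma~\ref{poly} do not hold, so the gap is essential rather than a matter of bookkeeping.
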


It is clear that Theorem \ref{singleweight} follows immediately from Theorem \ref{LV} by setting $w_k = w$ for each $k$.

As a corollary to Theorem \ref{singleweight}, we will easily obtain lower \ti{Hausdorff content} bounds for continuous images of $[0,1]^n$ in arbitrary metric spaces. Recall that if $(X,d)$ is a metric space, the $Q$-dimensional Hausdorff content of a compact subset $E \subset X$ is defined to be
$$\mathcal{H}_Q^{\infty}(E) = \inf \left\{\sum_{i\in I} (\diam U_i)^Q : \{U_i\}_{i \in I} \text{ is an open cover of } E \right\}.$$
We will show the following bound.

\begin{corollary} \label{contentcor}
Let $g \colon [0,1]^n \rightarrow X$ be continuous. Then
$$\mathcal{H}_n^{\infty}(g([0,1]^n)) \geq \prod_{k=1}^n \dist(g(F_k),g(F_k')),$$
where $\dist$ denotes the metric distance between sets in $X$.
\end{corollary}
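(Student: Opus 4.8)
The corollary should follow almost immediately from Theorem~\ref{singleweight} once we set things up correctly. Given a continuous $g\colon [0,1]^n \to X$ and an arbitrary open cover $\{V_i\}_{i\in I}$ of the image $g([0,1]^n)$ in $X$, I would pull back to get the open cover $\mathcal{U} = \{U_i\}_{i\in I}$ of $[0,1]^n$ defined by $U_i = g^{-1}(V_i)$. Each $U_i$ is open by continuity, and the $U_i$ cover $[0,1]^n$ since the $V_i$ cover $g([0,1]^n)$. Now define the weight function $w(i) = \diam V_i$ (measured in $X$). The goal is to show that for each $k$, $\dist_w(F_k, F_k') \geq \dist(g(F_k), g(F_k'))$, because then Theorem~\ref{singleweight} gives $\sum_{i} (\diam V_i)^n = \sum_i w(i)^n \geq \prod_k \dist_w(F_k,F_k') \geq \prod_k \dist(g(F_k), g(F_k'))$, and taking the infimum over all open covers $\{V_i\}$ of $g([0,1]^n)$ yields exactly $\mathcal{H}_n^\infty(g([0,1]^n)) \geq \prod_k \dist(g(F_k), g(F_k'))$.

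The one point requiring a small argument is the inequality $\dist_w(F_k,F_k') \geq \dist(g(F_k),g(F_k'))$. Suppose $U_{i_1},\ldots,U_{i_m}$ is a chain in $\mathcal{U}$ connecting $F_k$ and $F_k'$. Then the sets $V_{i_1},\ldots,V_{i_m}$ form a chain in $\{V_i\}$ connecting $g(F_k)$ and $g(F_k')$ in $X$: indeed $U_{i_j} \cap U_{i_{j+1}} \neq \emptyset$ forces $V_{i_j}\cap V_{i_{j+1}} \supseteq g(U_{i_j}\cap U_{i_{j+1}}) \neq \emptyset$, and $U_{i_1}\cap F_k \neq \emptyset$ gives a point whose image lies in both $V_{i_1}$ and $g(F_k)$, similarly at the other end. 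Picking a point $x_j \in V_{i_j} \cap V_{i_{j+1}}$ for each $j$, together with $x_0 \in V_{i_1}\cap g(F_k)$ and $x_m \in V_{i_m}\cap g(F_k')$, the triangle inequality gives
$$\dist(g(F_k),g(F_k')) \leq \sum_{j=1}^m d(x_{j-1},x_j) \leq \sum_{j=1}^m \diam V_{i_j} = \sum_{j=1}^m w(i_j),$$
since $x_{j-1}, x_j$ both lie in $V_{i_j}$. Taking the infimum over all such chains yields $\dist(g(F_k),g(F_k')) \leq \dist_w(F_k,F_k')$, as desired.

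There is essentially no main obstacle here; the content is entirely front-loaded into Theorem~\ref{singleweight}. The only mild subtleties are bookkeeping ones: making sure the pulled-back cover is genuinely an open cover (fine, by continuity), and handling the degenerate case where some $\diam V_i = \infty$ (which can only happen if $X$ is unbounded and we chose a silly cover, and in that case the bound $\sum_i (\diam V_i)^n = \infty$ is trivially true, so we may as well restrict to covers by sets of finite diameter, or even assume the cover is chosen so that each $V_i$ is bounded). I would also remark that one cannot in general replace Hausdorff content by Hausdorff measure in the conclusion without extra hypotheses, and that the case $g = \id$ recovers a lower content bound for $[0,1]^n$ itself that is the natural metric-space shadow of Derrick's theorem.
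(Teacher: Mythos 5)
Your proof is correct and follows essentially the same route as the paper: pull back an arbitrary open cover of $g([0,1]^n)$ under $g$, weight each set by its diameter in $X$, use the triangle inequality along chains to show the discrete distance dominates the metric distance $\dist(g(F_k),g(F_k'))$, then apply Theorem~\ref{singleweight} and take the infimum over covers. The paper merely factors this through an intermediate statement (Proposition~\ref{cor}) with general weight functions on the cover of the image before specializing to diameter weights, but the substance is identical.
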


For example, if $([0,1]^n,d)$ is the unit cube equipped with an arbitrary metric whose topology coincides with the Euclidean topology, then one can apply this inequality to the identity function. We remark here that our definition of Hausdorff content does not include the normalizing multiplicative factor $\Vol(B^n)2^{-n}$, where $B^n$ is the unit ball in $\R^n$ and $\Vol(\cdot)$ is $n$-dimensional Lebesgue measure, as is standard in geometric measure theory and which appears in the results of Derrick. In this sense, Corollary \ref{contentcor} does not recover Derrick's inequality. On the other hand, Corollary \ref{contentcor} is sharp in the metric category, and it can also be used to answer a question posed by Y. Burago and V. Zalgaller in \cite[p.\ 296]{BZ88} concerning pseudometrics on $[0,1]^n$. This is discussed further in Section \ref{metric}.

More generally, Corollary \ref{contentcor} provides a method of verifying lower volume bounds in fairly general classes of metric spaces. Namely, if one can find continuous images of cubes in a metric space $X$ whose sides are well-separated, then $X$ must satisfy some corresponding lower volume bounds. This leads us to study metric spaces that ``admit fat cubes." It turns out that some standard types of connectivity properties (that are common, for example, in the study of boundaries of hyperbolic groups) ensure that the metric space admits fat squares.

The outline of the paper is as follows. In Section \ref{prelim}, we will introduce some techniques that appear frequently: partitions of unity, the nerve of open covers, and the topological non-degeneracy lemma that we used in the above discussion on Derrick's inequality. Section \ref{cube} will be devoted entirely to the proof of Theorem \ref{LV}. In Section \ref{metric}, we take up the topic of lower Hausdorff content bounds in metric spaces, and the final section extends these considerations to the study of metric surfaces.

\subsection*{Acknowledgments}
This paper is part of the author's Ph.D. thesis at UCLA. He thanks his advisor, Mario Bonk, for many helpful discussions and recommendations about this project. He also thanks Andrey Mishchenko for bringing the (crucial) paper \cite{Sch93} to his attention and John Mackay for helpful feedback.

The author gratefully acknowledges partial support from NSF grant DMS-1162471 during his work on this project.

\section{Notation and preliminaries} \label{prelim}

Let $(X,d)$ be a metric space. As is standard, we will use $B(x,r)$ to denote the open ball centered at $x \in X$ with radius $r >0$. For subsets $A,B \subset X$, we let
$$\dist(A,B) = \inf \{d(x,y): x \in A \text{ and } y \in B\}$$
be the distance between $A$ and $B$. In the case that $A = \{x\}$ we abuse notation and simply write $\dist(x,B)$. We also let
$$\diam A = \sup \{d(x,y) : x,y \in A\}$$
be the diameter of the subset $A$. Following common notation, we use $\inter(A)$ and $\cl{A}$ to denote, respectively, the interior and closure of $A$ (the ambient space for the closure operation will be understood from context). Also, we let $\partial A = \cl{A} \backslash \inter(A)$ denote the boundary of $A$. Lastly, for $\e >0$, let
$$N_{\e}(A) = \{ x \in X : \dist(x,A) < \e \}$$
be the open $\e$-neighborhood of $A$.

Suppose that $(X,d)$ is compact, and let $\mathcal{U} = \{U_i\}_{i \in I}$ be a finite open cover. By the Lebesgue lemma, there is a positive constant $\delta >0$ such that each ball $B(x,2\delta)$ lies entirely in some set $U_i$. Let 
$$f_i(x) = \min \left\{ 1, \tfrac{1}{\delta}\dist(x, N_{\delta}(X \backslash U_i) \right\},$$
which is a $1/\delta$-Lipschitz function with values in $[0,1]$ and whose support is contained in $U_i$. Moreover, for each $x \in X$, we have
$$f(x) := \sum_{i \in I} f_i(x) \geq 1,$$
as $B(x,2\delta) \subset U_i$ for some $i$. If $N = \max \{\sum_i \chi_{U_i}(x) : x\in X \}$, then also $f(x) \leq N$ for all $x$. Define $\phi_i(x) = f_i(x)/f(x)$ so that the following properties hold:
\begin{itemize}
\item[(i)] $\phi_i$ is $(2N+1)/\delta$-Lipschitz with support contained in $U_i$;
 \item[(ii)] $0 \leq \phi_i(x) \leq 1$ for all $x \in X$;
\item[(iii)] $\sum_i \phi_i(x) = 1$ for all $x \in X$.
\end{itemize}
The family $\{\phi_i\}$ is therefore a $(2N+1)/\delta$-Lipschitz \ti{partition of unity} subordinate to $\mathcal{U}$ \cite[Chapter~2]{Hein05}. Partitions of unity are useful in general metric settings to produce a type of proxy for linear structure.

There is a canonical way to associate a simplicial complex to the cover $\mathcal{U}$ whose combinatorics mimics the combinatorics of $\mathcal{U}$. Often, this simplicial complex is defined as an abstract complex that encodes the intersections among the sets in $\mathcal{U}$. We prefer to work with a geometric realization of this complex in Euclidean space. For ease, then, let us index the collection $\{U_i\}$ by the integers $1,\ldots,M$, and let $e_i$ be the $i$-th standard basis vector in $\R^M$.

\begin{definition}
The nerve of $\mathcal{U}$, denoted by $\Ner(\mathcal{U})$, is
$$\Ner(\mathcal{U}) = \bigcup \{ \conv(e_{i_0},\ldots,e_{i_m}) : U_{i_0}\cap \cdots \cap U_{i_m} \neq \emptyset \},$$
where the union runs over collections of sets in $\mathcal{U}$ that have non-empty intersection.
\end{definition}

Here, and in general, we use $\conv(A)$ to denote the convex hull of a set $A \subset \R^M$. When $A = \{a_0,\ldots,a_m\}$ is a finite set, we can express
\begin{equation} \label{convex}
\conv(A) = \left\{ \sum_{i=0}^m \lambda_i a_i : \lambda_i \geq 0 \text{ and } \lambda_0+\ldots+\lambda_m = 1\right\},
\end{equation}
and if $m \leq M$, then this is a (possibly degenerate) $m$-dimensional simplex in $\R^M$. Thus, the simplex spanned by $e_{i_0},\ldots,e_{i_m}$ in $\R^M$ is in the nerve of $\mathcal{U}$ if, and only if, the corresponding sets $U_{i_0},\ldots,U_{i_m}$ have a common intersection.

The partition of unity $\{\phi_i \}_{i \in I}$ allows us to map $X$ naturally to $\Ner(\mathcal{U})$. Namely, define $\phi \colon X \rightarrow \Ner(\mathcal{U})$ by
\begin{equation} \label{phi}
\phi(x) = \sum_{i \in I} \phi_i(x) e_i, \hspace{0.5cm} x \in X,
\end{equation}
and note that $\phi$ is continuous. In fact, as each $\phi_i$ is Lipschitz, the map $\phi$ will be Lipschitz as well. The fact that $\phi(x) \in \Ner(\mathcal{U})$ follows immediately from the definition of the nerve, the characterization in \eqref{convex}, and the properties (ii) and (iii) above.

It will be useful for us later to subdivide the simplices in the nerve without changing $\Ner(\mathcal{U})$ as a set in $\R^M$. The \ti{barycentric subdivision} allows us to do this in a canonical way. Once again, we work with a geometric realization of the relevant complexes. 

Let $S \subset \R^M$ be a simplicial complex whose simplices are convex hulls of the standard basis vectors $e_i$. For each collection $\{e_{i_0},\ldots,e_{i_m}\}$ of vertices which generate a simplex in $S$, we define its \textit{barycenter} to be the point
$$\bc(e_{i_0},\ldots,e_{i_m}) = \tfrac{1}{m+1} \lp e_{i_0}+\ldots+e_{i_m} \rp .$$
The subdivision proceeds inductively, by dimension, on the simplices in $S$. Intuitively, we may think about it in the following way. First, subdivide each edge by adding a vertex at $\bc(e_i,e_j)$ whenever $\conv(e_i,e_j)$ is in $S$. Second, subdivide each 2-dimensional simplex by adding a vertex at $\bc(e_i,e_j,e_k)$ whenever $\conv(e_i,e_j,e_k)$ is in $S$, and then add edges from $\bc(e_i,e_j,e_k)$ to each vertex on the boundary of $\conv(e_i,e_j,e_k)$ (these vertices may come from $S$ itself or from the first step in the subdivision). Continue in the same way, until each simplex in $S$ has been subdivided. For further reference, see \cite[pp.\ 119--120]{Hat02}.

The resulting simplicial complex is called the first barycentric subdivision of $S$. Observe that the geometric realizations of the complexes we obtain throughout this process, including in the final step, coincide with $S$ \textit{as sets in $\R^M$}. We will, however, use $S_b$ to denote the geometric realization of this new complex to emphasize the fact that we have a refined simplicial structure. The following fact will be important in later arguments. If $\conv(e_{i_0},\ldots,e_{i_m})$ is an $m$-dimensional simplex in $S$, then after the barycentric subdivision, it is a union of $m$-dimensional simplices in $S_b$ with geometric form
\begin{equation} \label{simplexeq}
\conv(p_0,\ldots,p_m),
\end{equation}
where $p_j = \bc(e_{\sigma(i_0)},\ldots,e_{\sigma(i_j)})$ for each $0\leq j\leq m$, and $\sigma$ is a permutation of the indices $i_0,\ldots,i_m$. In particular, $p_0=e_{\sigma(i_0)}$.

Before concluding this section, let us record a topological lemma that will be useful later on. Let $P$ be a compact, convex set in $\R^n$. We say that a closed half-space $H \subset \R^n$ \ti{supports} $P$ if $P \cap H$ is non-empty and is contained in $\partial H$.

\begin{lemma} \label{poly}
Let $P \subset \R^n$ be compact and convex, with non-empty interior. Suppose that $f \colon P \rightarrow \R^n$ is continuous, and for each $x \in \partial P$ there is a closed half-space $H$ which supports $P$, with $x, f(x) \in H$. Then $P \subset f(P)$.
\end{lemma}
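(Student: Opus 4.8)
The plan is to reduce to interior points and then run a standard ``no retraction'' argument, the whole force of the hypothesis being funneled into one elementary geometric observation. First I would record two soft facts: since $P$ is compact, $f(P)$ is compact and hence closed; and since $P$ is convex with non-empty interior, $P = \cl{\inter(P)}$. It therefore suffices to show that every $p \in \inter(P)$ lies in $f(P)$, because then $P = \cl{\inter(P)} \subset \cl{f(P)} = f(P)$. So I fix $p \in \inter(P)$ and suppose, toward a contradiction, that $p \notin f(P)$.

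The heart of the matter is the following consequence of the boundary hypothesis, which I would establish first. Fix $x \in \partial P$ and let $H$ be a supporting half-space with $x, f(x) \in H$. Writing $H = \{y : \langle y - a, v\rangle \le 0\}$, the condition $P \cap H \subseteq \partial H$ forces $P \subseteq \{y : \langle y - a, v\rangle \ge 0\}$; since a small ball about $p$ lies in this closed half-space, we must have $\langle p - a, v\rangle > 0$, i.e. $p \notin H$. On the other hand $H$ is convex and contains both $x$ and $f(x)$, so the entire segment $\{(1-t)x + t f(x) : t \in [0,1]\}$ lies in $H$, and in particular never equals $p$. In other words, the straight-line homotopy $(x,t) \mapsto (1-t)x + t f(x)$ omits the value $p$ whenever $x \in \partial P$.

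Granting $p \notin f(P)$, radial projection from $p$ is available: for $y \neq p$ let $\pi(y)$ be the unique point of $\partial P$ on the ray $\{p + s(y - p) : s \ge 0\}$ (existence, uniqueness, and $\pi|_{\partial P} = \id_{\partial P}$ come from compactness and convexity of $P$ together with $p \in \inter(P)$; continuity of $\pi$ is continuity of the Minkowski gauge functional of $P$ based at $p$). Set $g = \pi \circ f \colon P \to \partial P$, which is well defined and continuous since $f$ never takes the value $p$. Restricting to $\partial P$, the map $h(x,t) = \pi\big((1-t)x + t f(x)\big)$ is well defined and continuous by the previous paragraph, with $h(\cdot,0) = \id_{\partial P}$ and $h(\cdot,1) = g|_{\partial P}$; hence $g|_{\partial P}$ is homotopic to $\id_{\partial P}$ within $\partial P$. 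But $g|_{\partial P} = g \circ \iota$ factors through $P$, where $\iota \colon \partial P \hookrightarrow P$ is the inclusion, so on reduced homology $(\id_{\partial P})_* = (g|_{\partial P})_* = g_* \circ \iota_*$ factors through $\tilde H_{n-1}(P) = 0$; this contradicts $\tilde H_{n-1}(\partial P) \cong \Z$, valid because $\partial P$ is homeomorphic to $S^{n-1}$. Therefore $p \in f(P)$, proving the lemma. (Alternatively, the same homotopy shows $\deg(f,\inter(P),p) = \deg(\id,\inter(P),p) = 1 \neq 0$, whence $p \in f(\inter(P))$.) I expect the only genuinely substantive step to be the second paragraph: correctly unpacking the slightly non-standard definition of a supporting half-space and verifying that an interior point of $P$ misses it. Everything else is routine point-set topology and the no-retraction theorem.
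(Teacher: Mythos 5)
Your proof is correct, and it takes a genuinely (if modestly) different route from the paper's. Both arguments begin the same way: reduce to showing $\inter(P)\subset f(P)$, assume a point $p$ (the paper translates to $p=0$) of $\inter(P)$ is missed, and bring in the radial projection from $p$ onto $\partial P$. Both also rely on the same key geometric observation, namely that a supporting half-space $H$ for $P$ cannot contain an interior point of $P$. From here the two diverge. The paper forms $g(x)=p(-f(x))$ and shows $g\colon P\to\partial P\subset P$ is \emph{fixed-point free}: a fixed point would lie on $\partial P$, but the supporting half-space through it contains $x$ and $f(x)$, forces $-f(x)$ (hence its radial projection) outside $H$, contradicting $g(x)=x\in H$. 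This violates Brouwer's fixed point theorem. You instead form $g=\pi\circ f$ (no sign flip) and observe that the straight-line homotopy from $\id_{\partial P}$ to $f|_{\partial P}$ stays inside $H$ for each boundary point, hence avoids $p$, so after composing with $\pi$ it gives a homotopy in $\partial P$ from $\id_{\partial P}$ to $g|_{\partial P}$; since $g|_{\partial P}$ factors through the contractible space $P$, this contradicts $\tilde H_{n-1}(\partial P)\cong\Z$. The two are of course equivalent in strength (Brouwer $\Leftrightarrow$ no-retraction), but your version avoids the sign trick and makes the role of the supporting half-space more transparent: it directly guarantees the segment $[x,f(x)]$ misses the interior point. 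Your parenthetical degree-theory phrasing is also fine and is perhaps the cleanest summary. One small stylistic remark: the paper's phrase ``lies on the segment joining $0$ and $-f(x)$'' is not literally accurate (the projection may lie beyond $-f(x)$ on the ray), though the intended conclusion $p(-f(x))\notin H$ still holds since the entire ray lies in the open half-space $\R^n\setminus H$; your formulation sidesteps this issue entirely.
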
  

\begin{proof}
As $f(P)$ is compact and $P$ is the closure of $\inter(P)$, it suffices to show that $\inter(P) \subset f(P)$. Aiming for a contradiction, let us suppose that there is $y \in \inter(P) \backslash f(P)$. By translation, we may assume that $y=0$.

The compactness and convexity of $P$ imply that for each $x \in \R^n \backslash \{0\}$, the ray from $0$ through $x$ intersects $\partial P$ in exactly one point, which we denote by $p(x)$. We claim that the map $p \colon \R^n \backslash \{0\} \rightarrow \partial P$ is continuous. To verify this, fix $\e>0$ small enough that $\cl{B}(0,\e) \subset \inter(P)$, and let $\pi \colon \R^n \backslash \{0\} \rightarrow \partial B(0,\e)$ be the canonical projection onto the sphere of radius $\e$. It is clear that $\pi$ is continuous. Observe also that $p(x) = p(\pi(x))$ for all $x \in \R^n \backslash \{0\}$, so it suffices to show that $p$, restricted to $\partial B(0,\e)$, is continuous. For this, we note that $p|_{\partial B(0,\e)}$ is the inverse map of $\pi|_{\partial P}$, the latter of which is a continuous bijection from the compact set $\partial P$ to $\partial B(0,\e)$. Consequently, it is a homeomorphism, so its inverse $p|_{\partial B(0,\e)}$ is also continuous.

Consider the map $g \colon P \rightarrow \partial P$ defined by $g(x) = p(-f(x))$. This is continuous by the assumption that $0 \notin f(P)$. We claim that $g$ has no fixed point. Indeed, if $g(x) = x$, then necessarily $x \in \partial P$. By hypothesis, there is a half-space $H$ of $\R^n$ which supports $P$ and has $x,f(x) \in H$. As $0$ lies in the complement of $H$, we know that $-f(x)$ is in the complement of $H$ as well. The point $p(-f(x))$ lies on the segment joining $0$ and $-f(x)$, and so it also fails to lie in $H$. This, however, contradicts the fact that $g(x)=x \in H$, so $g$ can have no fixed points. The existence of such a map $g$ contradicts the Brouwer fixed point theorem: any continuous map from a compact, convex set in $\R^n$ to itself has a fixed point. Thus, we obtain $\inter(P) \subset f(P)$, as desired.
\end{proof}

\section{A topological length-volume inequality for cubes} \label{cube}

Our primary goal in this section is to prove Theorem \ref{LV}; recall that Theorem \ref{singleweight} follows immediately from it. Before starting the proof, we should remark that our methods are heavily informed by the ideas in O. Schramm's paper on square tilings of rectangles \cite{Sch93}. In fact, there are many similarities between our proof of Theorem \ref{LV} and the proof of \cite[Theorem 5.1]{Sch93}.

Let $[0,1]^n$ be the standard Euclidean unit cube of dimension $n\geq 1$. We will use $F_k$ and $F'_k$, for $1\leq k \leq n$, to denote the pairs of opposite codimension-1 faces of the unit cube:
$$F_k = [0,1]^n \cap \pi_k^{-1}(\{0\}) \hspace{0.5cm} \text{and} \hspace{0.5cm} F_k' = [0,1]^n \cap \pi_k^{-1}(\{1\}),$$
where $\pi_k \colon \R^n \rightarrow \R$ is the projection to the $k$-th coordinate axis.

Let $\mathcal{U} = \{U_i\}_{i \in I}$ be an open cover of $[0,1]^n$, and let $w_k \colon I \rightarrow [0,\infty)$ be corresponding weight functions for $1 \leq k \leq n$. Here, and in what follows, an open cover of $[0,1]^n$ will always mean that the sets are open in the relative topology on $[0,1]^n$, unless otherwise explicitly stated. Using the notation from Section \ref{introsec}, we let $d_k = \dist_{w_k}(F_k,F_k')$. To prove Theorem \ref{LV}, we must show that
\begin{equation} \label{LVeq}
\sum_{i \in I} \lp \prod_{k=1}^n w_k(i) \rp \geq d_1 \cdots d_n.
\end{equation}

To obtain this inequality, we first work under an additional technical assumption on the open cover $\mathcal{U}$. Namely, if there exists $i \in I$ for which $U_i \cap F_k \neq \emptyset$ and $U_i \cap F_k' \neq \emptyset$ for some $k$, then we say that $\mathcal{U}$ is \ti{spanning}. We will verify the desired inequality first in the case that $\mathcal{U}$ is \ti{non-spanning}. After doing this, we will treat the general case by modifying slightly the open cover under consideration. Let us state the intermediate result as a separate proposition.

\begin{prop} \label{inter}
The inequality in \eqref{LVeq} holds under the hypothesis that the cover $\mathcal{U}$ is non-spanning.
\end{prop}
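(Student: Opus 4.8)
The plan is to mimic the proof of Derrick's inequality sketched in the introduction, replacing the Lipschitz distance functions $f_k$ by the combinatorial distance functions coming from the chain distance $\dist_{w_k}$, and replacing the ``pull-back volume form'' argument by a counting argument on the nerve. Concretely, for each $k$ define $\psi_k \colon [0,1]^n \to \R$ by $\psi_k(x) = \dist_{w_k}(F_k, \{x\})$, interpreting a chain ``connecting $F_k$ and $\{x\}$'' as one whose last set contains $x$. The key discrete analog of the 1-Lipschitz property is this: if $U_i$ is any set of the cover, then $\psi_k$ varies by at most $w_k(i)$ on $U_i$ in the sense that for $x,y \in U_i$ one has $|\psi_k(x) - \psi_k(y)| \le w_k(i)$ (append $U_i$ to a near-optimal chain ending near $x$ to reach $y$). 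Note also $\psi_k|_{F_k} = 0$ and $\psi_k|_{F_k'} \ge d_k$, using the non-spanning hypothesis to ensure that the single-set chain cannot cheat (any chain from $F_k$ reaching a point of $F_k'$ must genuinely be a chain connecting $F_k$ and $F_k'$, so its $w_k$-length is $\ge d_k$; more carefully, $\psi_k$ restricted to $F_k'$ is $\ge d_k$).

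Next I would push this through the nerve. Build a Lipschitz partition of unity $\{\phi_i\}$ subordinate to $\mathcal{U}$ as in Section \ref{prelim} and the associated map $\phi \colon [0,1]^n \to \Ner(\mathcal{U})$. Since $\psi_k$ is ``$w_k(i)$-Lipschitz on $U_i$,'' there is a well-defined piecewise-affine extension $\Psi_k \colon \Ner(\mathcal{U}) \to \R$ determined by assigning to the vertex $e_i$ a value $v_k(i)$ chosen from $\psi_k(U_i)$ (say, the infimum of $\psi_k$ on $U_i$), and extending affinely over each simplex; one checks $|v_k(i) - v_k(j)| \le w_k(i) + w_k(j)$ when $U_i \cap U_j \ne \emptyset$, or after barycentric subdivision one gets cleaner estimates of the form needed. (This is exactly the place where Schramm's square-tiling techniques enter: the barycentric subdivision \eqref{simplexeq} is used so that on each small simplex $\conv(p_0,\ldots,p_n)$ with $p_0 = e_{\sigma(i_0)}$, the oriented volume of the image under $\Psi = (\Psi_1,\ldots,\Psi_n)$ is controlled by a product of increments, each increment attached to a single vertex $e_{\sigma(i_j)}$ and hence bounded by the corresponding $w_k$-weight.) The upshot should be an inequality of Hadamard type: summing the absolute values of the signed volumes of the images $\Psi(\text{simplex})$ over all top-dimensional simplices of $\Ner(\mathcal{U})_b$ is bounded above by $\sum_{i \in I} \prod_{k=1}^n w_k(i)$, since each vertex $e_i$ contributes a term $\prod_k w_k(i)$ and the combinatorics of the barycentric subdivision arranges that each simplex is ``charged'' to its initial vertex $p_0 = e_{\sigma(i_0)}$ exactly once per orientation.

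To finish, I would produce the lower bound by the topological non-degeneracy Lemma \ref{poly}. Consider the composition $F = \Psi \circ \phi \colon [0,1]^n \to \R^n$. On the face $F_k$ we have $\Psi_k(\phi(x))$ small (at most $\max_i\{w_k(i) : U_i \cap F_k \ne \emptyset\}$, which we may make negligible, or handle by a limiting/$\e$-perturbation argument), and on $F_k'$ we have $\Psi_k(\phi(x)) \ge d_k$ minus a similar small error. After translating/scaling to absorb the errors, the hypotheses of Lemma \ref{poly} are satisfied with $P = [0,d_1] \times \cdots \times [0,d_n]$: for $x$ on the face $F_k$ the supporting half-space is $\{y_k \le 0\}$ (approximately), and for $x$ on $F_k'$ it is $\{y_k \ge d_k\}$. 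Hence $[0,d_1]\times\cdots\times[0,d_n] \subset F([0,1]^n)$, so the total signed area of the image, and therefore the sum of simplexwise signed volumes, is at least $d_1 \cdots d_n$ in absolute value. Combining with the upper bound from the previous paragraph yields \eqref{LVeq}.

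The main obstacle I anticipate is the bookkeeping in the middle step: getting a genuinely \emph{clean} Hadamard-type estimate that the sum of $|{\det}|$ of the affine pieces of $\Psi$ over the barycentric subdivision is at most $\sum_i \prod_k w_k(i)$, rather than off by a combinatorial multiplicative constant. This is where one must use the specific structure \eqref{simplexeq} of the barycentrically subdivided simplices — that each top simplex $\conv(p_0,\ldots,p_n)$ comes with an ordering of its vertices $p_0 = e_{\sigma(i_0)}, p_1 = \bc(e_{\sigma(i_0)},e_{\sigma(i_1)}),\ldots$, so that the edge vectors $p_j - p_{j-1}$ ``belong to'' distinct original vertices, letting one expand the determinant as a signed sum of products of per-vertex increments, each bounded by the appropriate $w_k$-weight. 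A secondary, more routine difficulty is handling the small boundary errors (the $\psi_k$ are not exactly $0$ on $F_k$ and not exactly $d_k$ on $F_k'$) and the non-degeneracy argument when some weights $w_k(i)$ vanish; both should be dispatched by a straightforward $\e$-approximation, replacing $w_k$ by $w_k + \e$ and letting $\e \to 0$ at the end.
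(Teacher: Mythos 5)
Your broad skeleton—combinatorial distance functions as a proxy for the Lipschitz $f_k$, pass through the nerve via a partition of unity, then invoke Lemma~\ref{poly} to plant $[0,d_1]\times\cdots\times[0,d_n]$ inside the image—matches the paper. The genuine gap is exactly where you flag it: the ``discrete Hadamard'' step in the middle does not close. You propose to bound $\sum_\Delta |\mathrm{Vol}(\Psi(\Delta))|$ over top simplices of $\Ner(\mathcal{U})_b$ by $\sum_i \prod_k w_k(i)$ by charging each simplex to its initial vertex $p_0=e_{\sigma(i_0)}$. But many top simplices of the barycentric subdivision share the same initial vertex, and their images can overlap; the sum of their unsigned volumes has no reason to be at most $\prod_k w_k(i_0)$, only the volume of their \emph{union} does. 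In the Riemannian model this over-count is masked because one bounds $\int\|f^*\omega\|$ pointwise and then appeals to degree for the lower bound; the discrete version of that pointwise bound simply is not $\sum_i\prod_k w_k(i)$ after summing, and there is no evident cancellation to rescue it. A secondary issue: assigning the vertex $e_i$ the value $v_k(i)=\inf_{U_i}\psi_k$ and extending affinely does not keep the image of the top simplex with initial vertex $e_{i_0}$ inside a single rectangle of side lengths $w_k(i_0)$, since averages of the $v_k(i_j)$ can drop below $v_k(i_0)$.

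The paper resolves this with a different mechanism that avoids Hadamard entirely. To each $U_i$ it attaches an explicit rectangle $R_i=\prod_k[d_k(i),d_k(i)+w_k(i)]$ with $d_k(i)=\dist_{w_k}(F_k,U_i)$ (or $0$ if $U_i$ meets $F_k$); it checks that the combinatorics of the $R_i$ mimic those of the $U_i$ (common intersection of sets forces common intersection of rectangles). The nerve map $\psi\colon S_b\to\bigcup_i R_i$ is then defined by sending the barycentric vertex $\bc(e_{i_0},\ldots,e_{i_m})$ not to $d_k(i_0)$ or an infimum, but to a point of $R_{i_0}\cap\cdots\cap R_{i_m}$ chosen coordinatewise by an endpoint rule: the left endpoint of the coordinate interval when all $U_{i_j}$ meet $F_k$, otherwise the right endpoint. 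This rule simultaneously guarantees that each top simplex with initial vertex $e_{i_0}$ maps into $R_{i_0}$ (by convexity of $R_{i_0}$) and that the boundary conditions $f(F_k)\subset\{x_k=0\}$, $f(F_k')\subset\{x_k\geq d_k\}$ hold—and the non-spanning hypothesis is used precisely at the latter step, to ensure the endpoint rule selects $b_k\geq d_k$ on $F_k'$ rather than $a_k=0$. Once $f([0,1]^n)\subset\bigcup_i R_i$ and $R\subset f([0,1]^n)$ are established, the desired inequality is immediate from subadditivity of Lebesgue measure: $d_1\cdots d_n=\Vol(R)\leq\sum_i\Vol(R_i)=\sum_i\prod_k w_k(i)$. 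So where your plan tries to discretize the pullback-form estimate, the paper replaces it by a set-containment into a union of rectangles; that replacement is the missing idea.
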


\begin{proof}
We may, of course, assume that $d_k > 0$ for each $k$; otherwise, the desired inequality is trivial. We may also assume that the cover $\mathcal{U}$ is finite. Indeed, compactness guarantees that any cover $\mathcal{U}$ of the cube contains a finite sub-cover. Removing the ``redundant" sets from this collection does not increase the left-hand side of the desired inequality and also does not decrease the distances $d_k$. Our proof will now proceed in several steps, which we explicitly indicate.

\textbf{Step 1: Associate a rectangle to each $U_i$.} For $i \in I$, define
$$d_k(i)  = 
\begin{cases}
0 &\text{ if } U_i \cap F_k \neq \emptyset , \\
\dist_{w_k}(F_k,U_i) &\text{ otherwise}
\end{cases}$$
for $1\leq k\leq n$. Of course, we have $d_k(i) \geq 0$ for each $i$ and $k$. Also note that, by finiteness of the cover $\mathcal{U}$, the infimum in the definition of discrete distances can be replaced by a minimum. More importantly, however, is the fact that if $U_i \cap F_k' \neq \emptyset$, then $d_k \leq d_k(i) + w_k(i)$. This follows immediately from the relevant definitions.

Now define
$$R_i = \prod_{k=1}^n [d_k(i), d_k(i) + w_k(i)], $$
which is an $n$-dimensional rectangle with side lengths $w_k(i)$. To simplify notation, we let
$$I_{k}(i) = \pi_k(R_i) = [d_k(i),d_k(i)+w_k(i)],$$
so that $R_i = \prod_k I_k(i)$. 

We will use $R_i$ as a sort of proxy for the set $U_i$. It will therefore be important that the combinatorics of the rectangles $\{R_i\}_{i \in I}$ mimic those of the sets $\{U_i\}_{i \in I}$, in the following sense.

\begin{claim} \label{cubeclaim}
If $U_i \cap U_j \neq \emptyset$, then $R_i \cap R_j \neq \emptyset$.
\end{claim}

\begin{proof}[Proof of claim]
We simply need to show that $I_k(i) \cap I_k(j) \neq \emptyset$ for each $k$. To this end, fix $k$ and without loss of generality, assume that $d_k(i) \leq d_k(j)$. We claim that $d_k(j) \leq d_k(i) + w_k(i)$. Indeed, there is a chain $U_{i_1},\ldots,U_{i_m}$ that connects $F_k$ and $U_i$ of length $d_k(i) = \dist_{w_k}(F_k, U_i)$; in case $U_i \cap F_k \neq \emptyset$, this is simply the empty chain. As $U_i \cap U_j \neq \emptyset$, the augmented chain $U_{i_1},\ldots,U_{i_m}, U_i$ connects $F_k$ and $U_j$. Thus, $d_k(j) \leq d_k(i) + w_k(i)$, which, along with the assumption that $d_k(i) \leq d_k(j)$, immediately gives $I_k(i) \cap I_k(j) \neq \emptyset$.
\end{proof}

We should remark here that the converse need not hold; there are many configurations, in fact, for which two rectangles intersect even though the corresponding open sets are disjoint. Claim \ref{cubeclaim} easily gives the following.

\begin{claim} \label{cubecor}
If $U_{i_0} \cap \cdots \cap U_{i_m} \neq \emptyset$, then $R_{i_0} \cap \cdots \cap R_{i_m} \neq \emptyset$.
\end{claim}

\begin{proof}[Proof of claim]
As before, it suffices to show that $I_k(i_0) \cap \cdots \cap I_k(i_m) \neq \emptyset$ for each $k$. From the previous claim, we know that $R_{i_j} \cap R_{i_{j'}} \neq \emptyset$ for any pair $j,j'$; in particular, $I_k(i_j) \cap I_k(i_{j'}) \neq \emptyset$. Thus, the $m+1$ intervals $I_k(i_0), \ldots, I_k(i_m)$ have pairwise non-empty intersections. This implies that $I_k(i_0), \ldots, I_k(i_m)$ have a point of common intersection: indeed, the maximum among their left endpoints is at most the minimum among their right endpoints. 
\end{proof}

Now that we have established a correspondence between the combinatorics of $\{U_i\}_{i \in I}$ and $\{R_i\}_{i \in I}$, we wish to map the unit cube $[0,1]^n$ continuously into $\bigcup_i R_i$. To ensure that the image lies within the union of the rectangles, it is technically convenient to pass through the nerve of the cover $\mathcal{U}$. Recall from Section \ref{prelim} that by enumerating $\mathcal{U} = \{U_1,\ldots,U_M \}$, we can express
$$\Ner(\mathcal{U}) = \bigcup \{ \conv(e_{i_0},\ldots,e_{i_m}) : U_{i_0}\cap \cdots \cap U_{i_m} \neq \emptyset \}.$$
The associated partition of unity $\{\phi_i \}$ subordinate to $\mathcal{U}$, which we constructed in the previous section, gives the continuous map 
$$\phi \colon [0,1]^n \rightarrow \Ner(\mathcal{U})$$
that was introduced in \eqref{phi}.

\textbf{Step 2: Map $\Ner(\mathcal{U})$ into $\bigcup_i R_i$.} In order to map $\Ner(\mathcal{U})$ into the union of the rectangles $R_i$, we will pass to the first barycentric subdivision of the nerve and then define our map simplicially. For ease, we use $S$ to denote the complex $\Ner(\mathcal{U})$ and, consistent with earlier notation, the complex obtained after the subdivision will be denoted by $S_b$. As sets in $\R^M$, the complexes $S$ and $S_b$ coincide; moreover, each vertex in $S_b$ arises as the barycenter of a simplex in $S$.

To define $\psi \colon S_b \rightarrow \bigcup_i R_i$, let us first determine where it sends the vertices. Fix such a vertex $p$, so that $p = \bc(e_{i_0},\ldots,e_{i_m})$ for some simplex, $\conv(e_{i_0},\ldots,e_{i_m})$, in the nerve $S$. Note that the choice of $e_{i_0},\ldots,e_{i_m}$ is uniquely determined by $p$, up to order. Then, as $U_{i_0} \cap \cdots \cap U_{i_m} \neq \emptyset,$ Claim \ref{cubecor} above guarantees that
$$R_{i_0} \cap \cdots \cap R_{i_m} \neq \emptyset.$$
We want to send $p$ to a point $z_p$ in this intersection, but we must be careful how to choose it. Recall that
\begin{equation} \label{squareint}
\begin{aligned}
R_{i_0} \cap \cdots \cap R_{i_m} &=  \lp \prod_{k=1}^n I_k(i_0) \rp \cap \cdots \cap \lp \prod_{k=1}^n I_k(i_m) \rp \\
&= \prod_{k=1}^n I_k(i_0) \cap \cdots \cap I_k(i_m),
\end{aligned}
\end{equation}
so choosing $z_p$ in $R_{i_0} \cap \cdots \cap R_{i_m}$ amounts to choosing each coordinate $\pi_k(z_p)$ in the interval 
$$[a_k,b_k] := I_k(i_0) \cap \cdots \cap I_k(i_m).$$ 
We do this according to the following rule. If $U_{i_j} \cap F_k \neq \emptyset$ for each $j$, then we choose $\pi_k(z_p) = a_k$; observe that in this case, $a_k=0$. Otherwise, we choose $\pi_k(z_p) = b_k$.

Let $\psi(p) = z_p$ be as above for the vertices $p$ of $S_b$. Extend $\psi$ to be affine on each simplex in $S_b$ so that $\psi \colon S_b \rightarrow \R^n$ is continuous. We claim that the image is contained in $\bigcup_i R_i$. To see this, first observe that we may express $S_b$ as a union of simplices $\Delta$ that are obtained by subdividing a simplex in $S$ of the same dimension. By \eqref{simplexeq} in the previous section, such simplices have geometric form
$$\Delta= \conv(p_0,\ldots,p_m),$$
where $p_j = \bc(e_{i_0},\ldots,e_{i_j})$ and $\conv(e_{i_0},\ldots,e_{i_m})$ is a simplex in $S$. Consequently,
$$\psi(\Delta) = \conv(\psi(p_0),\ldots,\psi(p_m)) = \conv(z_{p_0},\ldots,z_{p_m}).$$
The choice of $z_{p_j}$ guarantees that
$$z_{p_j} \in R_{i_0} \cap \cdots \cap R_{i_j} \subset R_{i_0}$$
for each $j=0,\ldots,m$. As $R_{i_0}$ is convex, we find $\psi(\Delta) \subset R_{i_0}$.

\textbf{Step 3: Map $[0,1]^n$ into $\bigcup_i R_i$.} We now want to compose $\phi \colon [0,1]^n \rightarrow S$ and $\psi \colon S_b \rightarrow \bigcup_i R_i$ to obtain a map from the unit cube into the collection of rectangles. Recall that the complexes $S$ and $S_b$ coincide as sets in $\R^M$, so we can define
$$f = \psi \circ \phi \colon [0,1]^n \rightarrow \bigcup_{i \in I} R_i,$$
which is continuous. Our goal now is to show that the image of $f$ contains the $n$-dimensional rectangle $ R = \prod_k [0,d_k]$. 

The main claim that we must establish toward this end is that
$$f(F_k) \subset \pi_k^{-1}(\{0\}) \hspace{0.3cm} \text{and} \hspace{0.3cm} f(F_k') \subset \pi_k^{-1}([d_k,\infty)).$$
From here, Lemma \ref{poly} almost immediately implies that $R \subset f([0,1]^n)$. To begin, let $x \in F_k \cup F_k'$, and let $U_{i_0},\ldots,U_{i_m}$ be the sets in $\mathcal{U}$ that contain $x$. Then
$$\phi(x) = \sum_{j=0}^m \phi_{i_j}(x)e_{i_j},$$
and $x \in \bigcap_j U_{i_j}$ implies that $\conv(e_{i_0},\ldots,e_{i_m})$ is a simplex in $\Ner(\mathcal{U})$. Also observe that if $x \in F_k$, then $U_{i_j} \cap F_k \neq \emptyset$ for each $j$; similarly, if $x \in F_k'$, then $U_{i_j} \cap F_k' \neq \emptyset$ for each $j$.

As $\phi(x) \in \conv(e_{i_0},\ldots,e_{i_m})$, after the barycentric subdivision, we know that
$$\phi(x) \in \Delta = \conv(p_0,\ldots,p_m),$$
where $p_j = \bc(e_{i_0},\ldots,e_{i_j})$ for each $j$ (without loss of generality, we may re-order the indices so that $\sigma$ is the identity permutation). Consequently, $f(x) = \psi(\phi(x))$ is a convex combination of the points $\psi(p_j) = z_{p_j}$. It therefore suffices to show that $\pi_k(z_{p_j}) = 0$ for each $j$ if $x \in F_k$, and that $\pi_k(z_{p_j}) \geq d_k$ for each $j$ if $x \in F'_k$. 

In the former case, we have $U_{i_j} \cap F_k \neq \emptyset$, so that $d_k(i_j) = 0$ for each $j$. Consequently, we know that $I_k(i_j) = [0,w_k(i_j)]$, so 
$$I_k(i_0) \cap \cdots \cap I_k(i_j) = [0,b_k(j)]$$
for some $b_k(j) \geq 0$. The choice of $z_{p_j}$ then guarantees that $\pi_k(z_{p_j}) = 0$ for each $j$.

In the latter case, we have $U_{i_j} \cap F'_k \neq \emptyset$, so that
$$d_k \leq d_k(i_j) + w_k(i_j)$$
for each $j$. In particular, 
$$I_k(i_0) \cap \cdots \cap I_k(i_j) = [a_k(j),b_k(j)]$$
for some $a_k(j) \geq 0$ and $b_k(j) \geq d_k$. As $\mathcal{U}$ is non-spanning, we know that $U_{i_j} \cap F_k = \emptyset$ for each $j$. By the choice of $z_{p_j}$, we therefore have 
$$\pi_k(z_{p_j}) = b_k(j) \geq d_k$$
for each $j$, as desired.

It is now straightforward to conclude the proof using Lemma \ref{poly}. Namely, let $H_k$ be the half-space $\pi_k^{-1}([0,\infty))$, and let $H_k'$ be the half-space $\pi_k^{-1}((-\infty,d_k])$, so that 
$$R =\prod_{k=1}^n [0,d_k] = \lp \bigcap_{k=1}^n H_k \rp \cap \lp \bigcap_{k=1}^n H_k' \rp.$$
Let $G_k = R \cap \partial H_k$ and $G_k' = R \cap \partial H_k'$ be the faces of $R$ corresponding to $F_k$ and $F_k'$, respectively, and let $g \colon R \rightarrow [0,1]^n$ be the linear map with $g(G_k) = F_k$ and $g(G_k') =F_k'$. We showed above that $f(F_k) \subset \R^n \backslash \inter(H_k)$ and $f(F_k') \subset \R^n \backslash \inter(H_k')$, so the composition $f \circ g \colon R \rightarrow \R^n$ has 
$$f \circ g(G_k) \subset \R^n \backslash \inter(H_k) \hspace{0.3cm} \text{and} \hspace{0.3cm} f \circ g(G_k') \subset \R^n \backslash \inter(H_k').$$
Lemma \ref{poly} then guarantees that $R \subset f \circ g(R) = f([0,1]^n)$. As $f([0,1]^n) \subset \bigcup_i R_i$, volume considerations immediately give
$$d_1\cdots d_n = \Vol(R) \leq \sum_{i\in I} \Vol(R_i) = \sum_{i \in I} \lp \prod_{k=1}^n w_k(i) \rp,$$
as desired.
\end{proof}

It is not difficult now to prove Theorem \ref{LV}; we only need to argue that the non-spanning assumption in Proposition \ref{inter} is not necessary.

\begin{proof}[\textbf{Proof of Theorem \ref{LV}}]
Let $\mathcal{U}$ be an open cover of $[0,1]^n$, let $w_k$ be associated weight functions, and let $d_k = \dist_{w_k}(F_k,F_k')$ be the corresponding distances, as in the statement of the theorem. Just as in the beginning of the proof of Proposition \ref{inter}, it suffices to assume that $\mathcal{U}$ is finite. Our goal is to modify the cover and the weights slightly in order to obtain a new cover to which we can apply Proposition \ref{inter}. We will do this in such a way that the ``volume" and the ``lengths" associated to the new cover are very close to the original quantities. We will perform this modification in multiple steps.

First, we wish to modify $\mathcal{U}$ to obtain an open cover of the cube so that any two sets either intersect or have strictly positive distance from each other. To this end, let $\delta_1 >0$ be small enough that for each $x \in [0,1]^n$, the ball $B(x,\delta_1)$ is entirely contained in some $U_i$. Also, let $\delta_2 >0$ be small enough so that whenever $U_i \cap U_j \neq \emptyset$, there is some point $z$ with $B(z,\delta_2) \subset U_i \cap U_j$. Similarly, let $\delta_3 > 0$ be small enough so that whenever $U_i \cap F_k \neq \emptyset$, there is $z \in F_k$ with $B(z,\delta_3) \subset U_i$, and whenever $U_i \cap F_k' \neq \emptyset$, there is $z \in F_k'$ with $B(z,\delta_3) \subset U_i$. Now define $\delta = \min \{\delta_1, \delta_2, \delta_3 \}$. 

For each $i$, let $A_i = \{ j : U_i \cap U_j = \emptyset \}$, $B_i = \{k : U_i \cap F_k = \emptyset \}$, and $B_i' = \{k : U_i \cap F_k' = \emptyset \}$. We then form the sets
$$\tilde{U}_i = U_i \backslash \lp \bigcup_{j \in A_i} \overline{N}_{\delta/2}(U_j) \cup \bigcup_{k \in B_i}  \overline{N}_{\delta/2}(F_k) \cup \bigcup_{k \in B_i'}  \overline{N}_{\delta/2}(F_k') \rp,$$
where $\overline{N}_{\e}(V)$ denotes the closed $\e$-neighborhood of $V$. Each $\tilde{U}_i$ is open, and as $\delta \leq \delta_1$, it is clear that $\bigcup_i \tilde{U}_i$ contains $[0,1]^n$.

We claim that for each $i$ and $j$, either $\tilde{U}_i \cap \tilde{U}_j \neq \emptyset$ or $\dist(\tilde{U}_i,\tilde{U}_j) \geq \delta/2$. Indeed, if $\dist(\tilde{U}_i,\tilde{U}_j) < \delta/2$, then there are $x \in \tilde{U}_i$ and $y \in \tilde{U}_j$ with $|x-y| < \delta/2$. This implies that $U_i \cap U_j \neq \emptyset$, for if not, then $j \in A_i$ so that $x$ could not be in $\tilde{U}_i$. Choose $z \in U_i \cap U_j$ with $B(z,\delta) \subset U_i \cap U_j$, which is possible because $\delta \leq \delta_2$. Then it must be that $z \in \tilde{U}_i \cap \tilde{U}_j$. Indeed, suppose that $z \notin \tilde{U}_i$. Then either there is $l \in A_i$ with $z \in \cl{N}_{\delta/2}(U_l)$ or there is $k \in B_i \cup B_i'$ with $z \in \cl{N}_{\delta/2}(F_k) \cup \cl{N}_{\delta/2}(F_k')$. However, as $B(z,\delta) \subset U_i$, the distance from $z$ to any of these $U_l$, $F_k$, or $F_k'$ is strictly larger than $\delta/2$. This immediately rules out $z \in \cl{N}_{\delta/2}(U_l)$ or $z \in \cl{N}_{\delta/2}(F_k) \cup \cl{N}_{\delta/2}(F_k')$. The argument for $z \in \tilde{U}_j$ is the same.

Similarly, we can also show that for each $i$ and $k$, either $\tilde{U}_i \cap F_k \neq \emptyset$ or $\dist(\tilde{U}_i,F_k) \geq \delta/2$. Indeed, if $\dist(\tilde{U}_i, F_k) < \delta/2$, then $U_i \cap F_k \neq \emptyset$. This implies that there is $z \in F_k$ for which $B(z,\delta) \subset U_i$. As $z \notin \cl{N}_{\delta/2}(U_j)$ for each $j \in A_i$, we necessarily have $z \in \tilde{U}_i$. Hence, $\tilde{U}_i \cap F_k \neq \emptyset$. The same arguments also show that for each $i$ and $k$, either $\tilde{U}_i \cap F_k' \neq \emptyset$ or $\dist(\tilde{U}_i,F_k') \geq \delta/2$. Thus, the collection $\{\tilde{U}_i\}_{i \in I}$ has the convenient property that for every incidence relevant to the calculation of a combinatorial distance, the associated sets either intersect or are of distance $\geq \delta/2$ from each other.

Let us again modify the collection $\{ \tilde{U}_i \}_{i \in I}$ slightly, in a way dependent on a parameter $\e >0$ that we will eventually send to 0. Namely, let $0<\e<\delta/(8\sqrt{n})$ be very small. For each $i$, let $V_i = N_{\e/2}(\tilde{U}_i)$, where the neighborhood is now taken in $\R^n$. Thus, each $V_i$ is open in $\R^n$, and the union $\bigcup_i V_i$ contains $[0,1]^n$ but does not intersect any of the half-spaces $\pi_k^{-1}([1+\e/2,\infty))$.

To the collection $\{V_i\}_{i \in I}$ we add small Euclidean balls to produce a cover of the cube $[0,1+\e]^n$. Namely, we can find a collection of points $\{x_j\}_{j \in J}$ with the following properties: $\#J \leq C_n(1/\e)^{n-1}$, where $C_n$ is a dimensional constant; each point $x_j$ lies in one of the codimension-1 spaces $\{ x \in \R^n : \pi_k(x)=1+\e \}$; and the balls $B_j = B(x_j,\sqrt{n}\e)$ have
$$(1,1+\e]^n \subset \bigcup_{j \in J} B_j.$$
Let $\mathcal{V}$ denote the collection of open sets $\{V_i\}_{i \in I} \cup \{B_j\}_{j \in J}$ so that $\mathcal{V}$ is an open cover of the cube $[0,1+\e]^n$. Let
$$G_k = \pi_k^{-1}(\{1+\e\}) \cap [0,1+\e]^n$$
be the codimension-1 face of $[0,1+\e]^n$ opposite to $F_k$, and observe that no set in $\mathcal{V}$ intersects both $F_k$ and $G_k$. In other words, $\mathcal{V}$ is non-spanning (the fact that we are covering a slightly larger cube is not a problem; indeed, Proposition \ref{inter} applies equally well to topological cubes). 

To obtain weight functions for $\mathcal{V}$, we of course want to use the original weights $w_k$ associated to the cover $\mathcal{U}$. Namely, let $v_k$ be weight functions for $\mathcal{V}$ defined by
$$v_k(V_i) = w_k(U_i) \hspace{0.3cm} \text{and} \hspace{0.3cm} v_k(B_j) = \e,$$ 
and let $\tilde{d}_k = \dist_{v_k}(F_k,G_k)$ be the associated distance between opposite faces of the cube. We claim that $d_k \leq \tilde{d}_k$ for each $k$. To see this, let us first establish that any chain in $\mathcal{V}$ of minimal $v_k$-length that connects $F_k$ to $G_k$ has the form
$$V_{i_1}, \ldots, V_{i_m}, B_j$$
for some collection $i_1,\ldots,i_m$ and some $j$. It is clear that the chain must end with some $B_j$, as none of the $V_i$ intersect $G_k$. Also, each ball $B_j$ intersects $G_k$, so the penultimate set in the chain cannot be some other ball $B_l$. Lastly, note that if $V_i, B_j, V_{i'}$ appears in the chain, then $\dist(V_i,V_{i'}) \leq \diam B_j \leq 2\sqrt{n}\e$, so that
$$\dist(\tilde{U}_i,\tilde{U}_{i'}) \leq 4 \sqrt{n}\e < \delta/2.$$
Consequently, $\tilde{U}_i \cap \tilde{U}_{i'} \neq \emptyset$, which also means that $V_i \cap V_{i'} \neq \emptyset$. Thus, in a minimal chain, a ball $B_j$ never appears between two of the $V_i$'s.

Let $V_{i_1}, \ldots, V_{i_m}, B_j$ be a chain of minimal $v_k$-length from $F_k$ to $G_k$. As $V_{i_l} \cap V_{i_{l+1}} \neq \emptyset$ for each $l$, we know that $\dist(\tilde{U}_{i_l},\tilde{U}_{i_{l+1}}) \leq \e < \delta/2$. Thus, $\tilde{U}_{i_l} \cap \tilde{U}_{i_{l+1}} \neq \emptyset$, so also $U_{i_l} \cap U_{i_{l+1}} \neq \emptyset$. Hence, $U_{i_1}, \ldots, U_{i_m}$ is a chain in the collection $\mathcal{U}$. Moreover, $\dist(\tilde{U}_{i_1},F_k) \leq \e/2 < \delta/2$, so $\tilde{U}_{i_1} \cap F_k \neq \emptyset$, and also $U_{i_1} \cap F_k \neq \emptyset$. Similarly, 
$$\dist(\tilde{U}_{i_m},F_k') \leq \e/2 + \diam B_j \leq \e/2 + 2\sqrt{n}\e < \delta/2,$$ so that $\tilde{U}_{i_m} \cap F_k' \neq \emptyset$, and also $U_{i_m} \cap F_k' \neq \emptyset$. Therefore, the chain $U_{i_1}, \ldots, U_{i_m}$ connects $F_k$ and $F_k'$, which implies that
$$d_k \leq \sum_{l=1}^m w_k(U_{i_l}) = \sum_{l=1}^m v_k(V_{i_l}) = \tilde{d}_k - \e \leq \tilde{d}_k.$$

Applying Proposition \ref{inter} to the collection $\mathcal{V}$ with weight functions $v_k$ gives
$$\prod_{k=1}^n d_k \leq \prod_{k=1}^n \tilde{d}_k \leq \sum_{i \in I} \lp \prod_{k=1}^n v_k(V_i) \rp + \sum_{j \in J} \e^n \leq \sum_{i \in I} \lp \prod_{k=1}^n w_k(U_i) \rp + C_n \e,$$
where the last inequality follows from the bound $\#J \leq C_n(1/\e)^{n-1}$. As this holds for any $0<\e<\delta/(8\sqrt{n})$, we send $\e$ to zero to obtain 
$$\prod_{k=1}^n d_k \leq \sum_{i\in I} \lp \prod_{k=1}^n w_k(i) \rp.$$
\end{proof}

\begin{remark}
Not surprisingly, the methods we have used to prove Theorem \ref{LV} can be adapted to prove similar inequalities on other convex polyhedra. In the Riemannian setting, Derrick's methods were extended to a much more general framework by M.~Gromov, and this includes a diameter-volume inequality for simplices \cite[Section 7]{Grom83}. We will not attempt to build an analogous framework here, but we should remark that modifying the arguments above to establish discrete diameter-volume inequalities for simplices is fairly straightforward. More specifically, let $\Delta^n$ denote the $n$-dimensional simplex, and let $T_1,\ldots, T_{n+1}$ denote its codimension-1 faces. If $\mathcal{U} = \{U_i\}_{i \in I}$ is an open cover of $\Delta^n$ and $w \colon I \rightarrow [0,\infty)$ is a corresponding weight function, then we define the \ti{diameter} of $\mathcal{U}$, with respect to $w$, to be
$$d_w(\mathcal{U}) = \inf \left\{ \sum_{j=1}^m w(i_j) : 
\begin{array}{l}
\text{the collection } U_{i_1},\ldots, U_{i_m} \text{ contains chains} \\
\text{that connect } T_k \text{ and } T_l \text{ for each pair } k,l
\end{array} \right\}.$$
Adapting the ideas in our proof of Theorem \ref{LV} to this setting, one can prove that 
$$\sum_{i \in I} w(i)^n \geq \frac{d_w(\mathcal{U})^n}{n!}.$$
Moreover if $w\equiv 1$ is constant, then a modification of the arguments used in \cite{Kin14} to prove Theorem \ref{combcube} gives the improved estimate
$$\# \mathcal{U} \geq {n+d_w(\mathcal{U})-1 \choose n}.$$
This is carried out in the author's Ph.D. thesis \cite[Theorem 3.17]{Kinphd}.
\end{remark}

\section{Lower volume bounds in metric spaces} \label{metric}

Using Theorem \ref{LV}, we can prove a similar length-volume inequality for \ti{images} of the Euclidean cube in a metric space. To set this up, let $(X,d)$ be a metric space and let $g \colon [0,1]^n \rightarrow X$ be a continuous map. Let $\mathcal{U} = \{U_i\}_{i \in I}$ be an open cover of $g([0,1]^n)$ with corresponding non-negative weight functions $w_k$ for $1\leq k\leq n$. If $A,B \subset g([0,1]^n)$ are subsets, then we define
$$\dist_{w_k}(A,B) = \inf \left\{ \sum_{j=1}^m w_k(i_j) : 
\begin{array}{l}
U_{i_1},\ldots,U_{i_m} \text{ is a chain} \\
\text{that connects } A \text{ and } B
\end{array} \right\}$$
as before, where \ti{chains} are finite sequences of the $\{U_i\}_{i \in I}$ whose consecutive sets have non-empty intersection. Let
$$d_k(g) = \dist_{w_k}(g(F_k),g(F_k'))$$
be the discrete distance between the images of opposite faces. Of course, this distance depends strongly on $\mathcal{U}$ and $w_k$ as well.

\begin{prop} \label{cor}
For such $g \colon [0,1]^n \rightarrow X$, we have
$$\sum_{i \in I} \lp \prod_{k=1}^n w_k(i) \rp \geq \prod_{k=1}^n d_k(g) .$$
\end{prop}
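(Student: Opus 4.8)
The plan is to deduce this directly from Theorem \ref{LV} by pulling the cover back to the domain cube. First I would set $U_i' = g^{-1}(U_i)$ for $i \in I$; since $g$ is continuous, each $U_i'$ is relatively open in $[0,1]^n$, and since $\mathcal{U}$ covers $g([0,1]^n)$, the family $\mathcal{U}' = \{U_i'\}_{i \in I}$ is an open cover of $[0,1]^n$, indexed by the same set $I$ (any empty members are harmless as an input to Theorem \ref{LV}). I would keep the same weight functions $w_k \colon I \to [0,\infty)$.

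The only point that needs genuine verification is that passing to the pullback does not decrease the relevant discrete distances. Writing $d_k' = \dist_{w_k}(F_k, F_k')$ for the distance computed in $\mathcal{U}'$, I would show $d_k' \geq d_k(g)$ for each $k$. To see this, take any chain $U_{i_1}', \dots, U_{i_m}'$ in $\mathcal{U}'$ connecting $F_k$ and $F_k'$. A point of $U_{i_j}' \cap U_{i_{j+1}}'$ maps under $g$ into $U_{i_j} \cap U_{i_{j+1}}$, so consecutive members of $U_{i_1}, \dots, U_{i_m}$ meet; a point of $U_{i_1}' \cap F_k$ maps into $U_{i_1} \cap g(F_k)$, and a point of $U_{i_m}' \cap F_k'$ maps into $U_{i_m} \cap g(F_k')$. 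Hence $U_{i_1}, \dots, U_{i_m}$ is a chain in $\mathcal{U}$ connecting $g(F_k)$ and $g(F_k')$ of the same $w_k$-length. Every chain-length achievable in $\mathcal{U}'$ between $F_k$ and $F_k'$ is thus achievable in $\mathcal{U}$ between $g(F_k)$ and $g(F_k')$, and taking infima yields $d_k' \geq d_k(g)$.

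Granting this, Theorem \ref{LV} applied to $\mathcal{U}'$ with the weights $w_k$ gives
$$\sum_{i \in I} \lp \prod_{k=1}^n w_k(i) \rp \geq \prod_{k=1}^n d_k' \geq \prod_{k=1}^n d_k(g),$$
where the last step uses that each $d_k' \geq d_k(g) \geq 0$. This is exactly the claimed inequality.

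I do not anticipate a real obstacle: the proposition is essentially an immediate corollary of Theorem \ref{LV}, and the only care required is the elementary point-chasing of the previous paragraph, together with the (routine) observation that continuity of $g$ makes the sets $U_i'$ open, so that $\mathcal{U}'$ is a legitimate open cover of the cube.
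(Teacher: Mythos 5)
Your proposal is correct and matches the paper's own proof essentially step for step: pull the cover back via $g^{-1}$, verify the pulled-back chains push forward to chains in $\mathcal{U}$ so that the discrete distances do not decrease, and apply Theorem \ref{LV}. No gaps.
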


\begin{proof}
For each $i \in I$, let $V_i= g^{-1}(U_i)$, so that the collection $\mathcal{V} = \{V_i\}_{i \in I}$ forms an open cover of $[0,1]^n$. Let $d_k = \dist_{w_k}(F_k,F_k')$ be the discrete distance associated to the cover $\mathcal{V}$. Observe that if $V_{i_1},\ldots,V_{i_m}$ is a chain in $\mathcal{V}$ that connects $F_k$ and $F_k'$, then $U_{i_1},\ldots,U_{i_m}$ is a chain in $\mathcal{U}$ that connects $g(F_k)$ and $g(F_k')$. Consequently, we have $d_k(g) \leq d_k$ for each $k$. Applying Theorem \ref{LV} to the cover $\mathcal{V}$ with weights $w_k$ gives
$$\prod_{k=1}^n d_k(g) \leq \prod_{k=1}^n d_k \leq \sum_{i \in I} \lp \prod_{k=1}^n w_k(i) \rp,$$
as desired.
\end{proof}

Using this proposition, we can establish a similar inequality relating more standard metric quantities such as Hausdorff measure and metric distance between sets. Recall from Section \ref{introsec} that if $(X,d)$ is a metric space and $E \subset X$ is compact, the $Q$-dimensional Hausdorff content of $E$ is
$$\mathcal{H}_Q^{\infty}(E) = \inf \left\{\sum_{i \in I} (\diam U_i)^Q : \{U_i\}_{i \in I} \text{ is an open cover of } E \right\}.$$
The associated Hausdorff $Q$-dimensional measure is defined to be
$$\mathcal{H}_{Q}(E) = \lim_{\e \searrow 0} \inf \left\{\sum_{i \in I} (\diam U_i)^Q : \{U_i\}_{i \in I} \text{ covers } E \text{ and } \diam U_i < \e \right\},$$
and it is clear that $\mathcal{H}_{Q}(E) \geq \mathcal{H}_Q^{\infty}(E)$. Thus, lower bounds on Hausdorff content are also lower bounds on Hausdorff measure.

\begin{corollary} \label{imagecube}
If $g \colon [0,1]^n \rightarrow X$ is continuous, then 
$$\mathcal{H}_n^{\infty}(g([0,1]^n)) \geq \prod_{k=1}^n \dist(g(F_k),g(F_k')).$$
\end{corollary}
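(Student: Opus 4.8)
The plan is to deduce Corollary~\ref{imagecube} directly from Proposition~\ref{cor} by a judicious choice of open cover and weight functions. Fix an arbitrary open cover $\mathcal{U} = \{U_i\}_{i \in I}$ of the compact set $g([0,1]^n) \subset X$, and for each $k$ simply set $w_k(i) = \diam U_i$, the \emph{same} weight function for every $k$. With this choice, the left-hand side of the inequality in Proposition~\ref{cor} becomes $\sum_{i \in I} \prod_{k=1}^n w_k(i) = \sum_{i \in I} (\diam U_i)^n$, which is precisely the quantity appearing in the definition of $\mathcal{H}_n^\infty$. So it remains to bound the right-hand side $\prod_{k=1}^n d_k(g)$ from below by $\prod_{k=1}^n \dist(g(F_k), g(F_k'))$, and for this it suffices to show $d_k(g) \geq \dist(g(F_k), g(F_k'))$ for each $k$ individually.

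The key step is thus the pointwise estimate: for any chain $U_{i_1}, \ldots, U_{i_m}$ in $\mathcal{U}$ connecting $g(F_k)$ and $g(F_k')$, one has $\sum_{j=1}^m \diam U_{i_j} \geq \dist(g(F_k), g(F_k'))$. This is an elementary triangle-inequality argument: pick $x \in U_{i_1} \cap g(F_k)$ and $y \in U_{i_m} \cap g(F_k')$, choose a point $z_j \in U_{i_j} \cap U_{i_{j+1}}$ for each $j$ (nonempty by the chain condition), and set $z_0 = x$, $z_m = y$. Then $z_{j-1}, z_j \in U_{i_j}$ for each $j$, so $d(z_{j-1}, z_j) \leq \diam U_{i_j}$, and summing gives $d(x,y) \leq \sum_{j=1}^m \diam U_{i_j}$. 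Since $x \in g(F_k)$ and $y \in g(F_k')$, the left side is at least $\dist(g(F_k), g(F_k'))$. Taking the infimum over all connecting chains yields $d_k(g) \geq \dist(g(F_k), g(F_k'))$ exactly when the chain is nonempty; the case $m=0$ can only occur if $g(F_k)$ and $g(F_k')$ are not disjoint, in which case $\dist(g(F_k),g(F_k')) = 0$ and there is nothing to prove.

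Combining these two observations, Proposition~\ref{cor} gives
$$\sum_{i \in I} (\diam U_i)^n \;=\; \sum_{i \in I} \lp \prod_{k=1}^n w_k(i) \rp \;\geq\; \prod_{k=1}^n d_k(g) \;\geq\; \prod_{k=1}^n \dist(g(F_k), g(F_k')).$$
Since $\mathcal{U}$ was an arbitrary open cover of $g([0,1]^n)$, taking the infimum of the left-hand side over all such covers gives $\mathcal{H}_n^\infty(g([0,1]^n)) \geq \prod_{k=1}^n \dist(g(F_k), g(F_k'))$, as claimed.

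I do not expect any genuine obstacle here: the corollary is essentially a translation of Proposition~\ref{cor} into metric language, and the only content is the routine triangle-inequality lemma bounding a chain's total diameter below by the metric distance between its endpoints' target sets. The one small point to be careful about is the degenerate empty-chain case and, if one wants to be fastidious, the fact that the $U_i$ here are open subsets of $X$ (or of $g([0,1]^n)$ with the subspace topology) rather than of the cube — but this is immaterial since diameters and intersections are intrinsic, and Proposition~\ref{cor} is stated exactly in this generality.
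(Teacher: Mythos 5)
Your proof is correct and is essentially identical to the paper's: both set $w_k(i) = \diam U_i$ for all $k$, invoke Proposition~\ref{cor}, and reduce to the observation that any chain connecting $g(F_k)$ and $g(F_k')$ has total diameter at least $\dist(g(F_k), g(F_k'))$. The only difference is that you spell out the triangle-inequality argument behind that last observation (and flag the empty-chain case), which the paper states without proof.
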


\begin{proof}
Fix an open cover $\{U_i\}_{i \in I}$ of $g([0,1]^n)$, and let $w_k(i) = \diam U_i$ for each $i$ and each $1\leq k \leq n$. Observe that if $U_{i_1},\ldots,U_{i_m}$ is a chain connecting $g(F_k)$ and $g(F_k')$, then
$$\dist(g(F_k),g(F_k')) \leq \sum_{j=1}^m \diam U_{i_j} = \sum_{j=1}^m w_k(i_j).$$
Thus, $\dist(g(F_k),g(F_k')) \leq \dist_{w_k}(g(F_k),g(F_k'))$ for each $k$. By Proposition \ref{cor}, we have
$$\sum_{i \in I} (\diam U_i)^n \geq \prod_{k=1}^n \dist_{w_k}(g(F_k),g(F_k')) \geq \prod_{k=1}^n \dist(g(F_k),g(F_k')).$$
As this holds for any open cover $\{U_i\}_{i \in I}$ of $g([0,1]^n)$, we obtain the desired inequality.
\end{proof}

Let us note that Corollary \ref{imagecube} is sharp in the sense that equality holds when $X=([0,1]^n,\ell_{\infty})$ and $g$ is the identity map. In this case, $\dist_X(F_k,F_k') = 1$ for each $k$, and it is straightforward to show even that $\mathcal{H}_{Q}(X)=1$. Indeed, one can cover $[0,1]^n$ by $2^{jn}$ cubes of side-length $2^{-j}$ for each $j \in \N$, and the $\ell_{\infty}$-diameter of such cubes is $2^{-j}$. Thus, it is important that we \ti{do not} define Hausdorff content (or Hausdorff measure) with the normalizing factor $\Vol(B^n)2^{-n}<1$.

We also observe that in both Proposition \ref{cor} and Corollary \ref{imagecube}, it is not necessary that $X$ be a metric space. The same arguments hold if $X$ is a \ti{pseudometric space}: the distance between distinct points is allowed to be zero. We illustrate this with the following result, which essentially answers a question of Y.~Burago and V.~Zalgaller \cite[p.\ 296]{BZ88}.

\begin{corollary} \label{BurZal}
Let $\rho$ be a pseudometric on $[0,1]^n$, and assume that every open set in the topology determined by $\rho$ is also open in the Euclidean topology. Then
$$\mathcal{H}^{\infty}_{n, \rho}([0,1]^n) \geq \prod_{k=1}^n \dist_{\rho}(F_k,F_k').$$
\end{corollary}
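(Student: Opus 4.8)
The plan is to deduce Corollary \ref{BurZal} from Corollary \ref{imagecube} (in its pseudometric version) by viewing the identity map as a continuous map from the Euclidean cube into the pseudometric space $([0,1]^n,\rho)$. Let $X$ denote the set $[0,1]^n$ equipped with the pseudometric $\rho$, and let $g \colon [0,1]^n \to X$ be the identity on underlying sets, where the domain carries the Euclidean topology. The hypothesis that every $\rho$-open set is Euclidean-open says exactly that $g$ is continuous. Then $g(F_k) = F_k$ and $g(F_k') = F_k'$ as subsets of $X$, so $\dist(g(F_k),g(F_k')) = \dist_\rho(F_k,F_k')$, and likewise $\mathcal{H}_n^\infty(g([0,1]^n)) = \mathcal{H}_{n,\rho}^\infty([0,1]^n)$ by definition, since the $\rho$-diameter of a set is what appears in both content functionals and the image $g([0,1]^n)$ is all of $X$.

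First I would recall, as remarked just before the statement, that Proposition \ref{cor} and hence Corollary \ref{imagecube} go through verbatim when $X$ is only a pseudometric space: the proof of Corollary \ref{imagecube} only uses the triangle inequality for $\rho$ (to bound $\dist_\rho(g(F_k),g(F_k'))$ along a chain by the sum of $\rho$-diameters) and never uses that $\rho$-distinct points have positive distance. The proof of Proposition \ref{cor} in turn only uses Theorem \ref{LV}, which is a purely combinatorial statement about open covers of the Euclidean cube and makes no reference to the target at all. So no new work is needed on that side; I would simply state that Corollary \ref{imagecube} holds with ``metric space'' replaced by ``pseudometric space.''

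Second, I would apply this pseudometric version of Corollary \ref{imagecube} to the map $g$ described above and rewrite the conclusion using the two identifications noted in the first paragraph. This yields
$$\mathcal{H}^{\infty}_{n,\rho}([0,1]^n) = \mathcal{H}_n^\infty(g([0,1]^n)) \geq \prod_{k=1}^n \dist(g(F_k),g(F_k')) = \prod_{k=1}^n \dist_\rho(F_k,F_k'),$$
which is the desired inequality.

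There is essentially no obstacle here beyond bookkeeping; the only point requiring care is the verification that ``every $\rho$-open set is Euclidean-open'' is precisely the right continuity hypothesis, namely that it guarantees $g$ is continuous as a map from the Euclidean cube to $X$, so that the open cover $\mathcal{U} = \{U_i\}$ of $X$ used in the definition of $\mathcal{H}^\infty_{n,\rho}$ pulls back to an open cover $\{g^{-1}(U_i)\}$ of the Euclidean cube — exactly what is needed in the proof of Proposition \ref{cor}. One should also note in passing that the infimum defining $\mathcal{H}^\infty_{n,\rho}$ may be taken over covers by $\rho$-open sets, which by hypothesis are Euclidean-open, so there is no discrepancy between the covers allowed on the two sides of the identification $\mathcal{H}_n^\infty(g([0,1]^n)) = \mathcal{H}^\infty_{n,\rho}([0,1]^n)$.
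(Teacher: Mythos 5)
Your proof is correct, but it takes a slightly different route from the paper's. You invoke the pseudometric extension of Corollary \ref{imagecube} directly, applying it to the identity map $g\colon ([0,1]^n,\text{Euclidean})\to([0,1]^n,\rho)$, whose continuity is precisely the topological-compatibility hypothesis. The paper, by contrast, only ever uses the genuine-metric version of Corollary \ref{imagecube}: it passes to the metric quotient $\tilde X = X/\!\sim$ (identifying points at $\rho$-distance zero), shows the projection $\pi\colon[0,1]^n\to\tilde X$ is continuous by the same topological-compatibility argument, checks that $\dist_\rho$ and $\mathcal{H}^\infty_{n,\rho}$ are unchanged under this quotient, and then applies Corollary \ref{imagecube} to $\pi$. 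Your route is arguably cleaner given that the paper has already remarked (just before the statement) that Proposition \ref{cor} and Corollary \ref{imagecube} hold verbatim in the pseudometric category; the trade-off is that the paper's quotient construction avoids having to re-audit the earlier proofs for pseudometric validity, instead packaging the reduction into the two easy identities \eqref{pseudo}. Either way the underlying mechanism — topological compatibility gives a continuous map, to which the image-cube content bound applies — is the same, and the two identifications you check in your first paragraph are exactly what the paper records as \eqref{pseudo}.
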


Here, $\mathcal{H}^{\infty}_{n, \rho}([0,1]^n)$ and $\dist_{\rho}(F_k,F_k')$ are defined in the same manner as the usual Hausdorff content and distance, but using the pseudometric $\rho$ instead of an actual metric. Burago and Zalgaller asked whether the inequality in Corollary \ref{BurZal} is true with Hausdorff measure in place of Hausdorff content, though it seems that their definition of Hausdorff measure includes the factor $\Vol(B^n)2^{-n}$. Taken on face-value, this question has a negative answer, as is exhibited by the metric space $([0,1]^n,\ell_{\infty})$. Corollary \ref{BurZal} appears to be the strongest statement that holds in the general metric setting. It does not, however, immediately recover the inequalities of Derrick and Almgren that are discussed in \cite[Section 38.1]{BZ88}.

\begin{proof}
The pseudometric $\rho$ on $X=[0,1]^n$ canonically induces a metric $\tilde{\rho}$ on the quotient space $\tilde{X} = X/\sim$, where $\sim$ is the equivalence relation $x \sim x'$ if $\rho(x,x')=0$. Let $\pi \colon X \rightarrow \tilde{X}$ be the associated projection. It is straightforward to see that
\begin{equation} \label{pseudo}
\dist_{\rho}(F_k,F_k') = \dist_{\tilde{\rho}}(\pi(F_k),\pi(F_k')) \hspace{0.3cm} \text{and} \hspace{0.3cm} \mathcal{H}^{\infty}_{n, \rho}(X) = \mathcal{H}^{\infty}_{n, \tilde{\rho}}(\tilde{X}).
\end{equation}
Moreover, $U \subset \tilde{X}$ is open in the metric topology if and only if $\pi^{-1}(U)$ is open in the topology on $[0,1]^n$ determined by $\rho$. The hypothesis of topological compatibility then ensures that $\pi$, when viewed as a map from the Euclidean cube $[0,1]^n$ to the metric space $\tilde{X}$, is continuous. Corollary \ref{imagecube}, along with \eqref{pseudo}, gives the desired conclusion.
\end{proof}

Corollary \ref{imagecube} points us in the following direction: in what generality can one obtain Euclidean-type lower volume bounds in metric spaces? More precisely, for which metric spaces $(X,d)$, does one have 
\begin{equation} \label{volbound}
\mathcal{H}_{n}^{\infty}(B(x,r)) \gtrsim r^n
\end{equation}
for all metric balls $B(x,r)$ with $0< r \leq \diam X$? An immediate consequence of Corollary \ref{imagecube} is that \eqref{volbound} holds whenever $X$ satisfies the following property.

\begin{definition} \label{fatcube}
A metric space $(X,d)$ is said to \ti{admit fat $n$-cubes} if there is $\lambda \geq 1$ such that, for each $x \in X$ and $0< r \leq \diam X$, there is a continuous map $g \colon [0,1]^n \rightarrow B(x, r)$ with $\dist(g(F_k),g(F_k')) \geq r/\lambda$ for each $k$. 
\end{definition}

Euclidean-type lower volume bounds cannot, of course, hold in complete generality: the existence of ``thin necks" or ``outward cusps" in $X$ would hinder large volume in certain regions. Thus, it makes sense to impose appropriate connectivity conditions when addressing such questions. For $\lambda \geq 1$, we say that $X$ is $\lambda$-linearly locally contractible if for each $x \in X$ and $0< r \leq (\diam X)/\lambda$, the metric ball $B(x,r)$ can be contracted within $B(x,\lambda r)$ to a point. We say that $X$ is linearly locally contractible if it is $\lambda$-linearly locally contractible for some value $\lambda$. In this context, it turns out that there is a fairly general result on lower volume bounds, which relies on the following deep fact proved by S.~Semmes.

\begin{theorem}[Semmes {\cite[Theorem 1.29(a)]{Sem96}}] \label{semmes}
Let $(X,d)$ be a closed manifold of dimension $n \geq 2$ that is $N$-doubling and $\lambda$-linearly locally contractible. Then for each $x \in X$ and each $0<r \leq \diam X$, there is a surjective map $f \colon X \rightarrow \Sp^n$ that is $C/r$-Lipschitz and is constant outside of $B(x,r/2)$. Here, $C$ depends only on $n$, $N$, and $\lambda$.
\end{theorem}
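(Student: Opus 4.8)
The plan is to argue by quantitative topology, in the spirit of the nerve constructions of Section~\ref{prelim}: approximate $X$ near $B(x,r/2)$ by the nerve of a fine cover, use the $N$-doubling condition to keep the combinatorics of that nerve bounded, and use $\lambda$-linear local contractibility to transport maps across its skeleta with Lipschitz constants depending only on $n$, $N$, and $\lambda$. Fix $x$ and $r$, let $\delta=\delta(n,\lambda)\in(0,1)$ be small (chosen at the end), and take a maximal $\delta r$-separated set $\{y_j\}_{j\in J}$ in $\overline{B}(x,r/2)$. Put $U_j=B(y_j,2\delta r)$ and adjoin the exterior set $U_\infty=X\setminus\overline{B}(x,r/2-C_0\delta r)$, $C_0$ an absolute constant, so that $\mathcal{U}=\{U_j\}\cup\{U_\infty\}$ is a finite open cover of $X$. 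The $N$-doubling property bounds $\#J$ (in terms of $N,\delta$) and the multiplicity of $\mathcal{U}$ (by some $K=K(N)$), hence bounds $\dimup\Ner(\mathcal{U})$ by $K-1$. The partition of unity of Section~\ref{prelim} for $\mathcal{U}$ may be taken $C(N)/(\delta r)$-Lipschitz, so the map $p\colon X\to\Ner(\mathcal{U})$ of \eqref{phi} is $C(n,N,\lambda)/r$-Lipschitz, and an elementary estimate with the separation radii shows $p$ sends all of $X\setminus B(x,r/2)$ into the closed star of the vertex $e_\infty$.

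Next I would build a partial inverse on the $n$-skeleton of $\Ner(\mathcal{U})$ by induction on skeleta: send each vertex $e_j$ to $y_j$ and $e_\infty$ to a fixed point of $X\setminus\overline{B}(x,r/2)$; if $q$ is defined on the $(k-1)$-skeleton, $1\le k\le n$, so that the boundary of each $k$-simplex $\conv(e_{j_0},\dots,e_{j_k})$ lands in a ball of radius $\lesssim\lambda^{k-1}\delta r$ about $y_{j_0}$ (possible since $U_{j_0}\cap\cdots\cap U_{j_k}\neq\emptyset$ forces the centers into a single $C\delta r$-ball), then $\lambda$-linear local contractibility lets one fill in the simplex with image in a ball of radius $\lesssim\lambda^{k}\delta r$. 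Choosing $\delta$ with $\lambda^n\delta$ small keeps every such radius $\ll r$. Let $p'\simeq p$ be a cellular approximation with image in the $n$-skeleton, carried out so that $p'(z)$ stays in the simplex spanned by the $e_j$ with $z\in U_j$; then $\dist(q(p'(z)),z)\lesssim\lambda^n\delta r\ll r$ for all $z$. Since $X$ is an $n$-dimensional manifold, a skeleton-by-skeleton obstruction argument — using linear local contractibility only at scales $\ll\diam X$ — upgrades this to a homotopy $q\circ p'\simeq\id_X$. Hence $q_*\circ p_*=\id$ on $H_n(\,\cdot\,;\Z/2)$, so $p_*$ is injective there. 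Collapsing the closed star of $e_\infty$ (a cone, hence contractible) to a point $\ast$ yields a finite CW complex $P$ with collapse $c\colon\Ner(\mathcal{U})\to P$, and since $c_*$ is injective on $H_n(\,\cdot\,;\Z/2)$ for $n\ge 1$, the class $c_*p_*[X]\in H_n(P;\Z/2)$ is nonzero, where $[X]$ generates $H_n(X;\Z/2)=\Z/2$.

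It remains to produce $\rho\colon P\to\Sp^n$, constant near $\ast$, with $\rho_*(c_*p_*[X])$ the generator of $H_n(\Sp^n;\Z/2)$; then $f=\rho\circ c\circ p$ works: it is constant $=\rho(\ast)$ on $X\setminus B(x,r/2)$ (since $p$ maps that set into the collapsed star), it is $C/r$-Lipschitz with $C=C(n,N,\lambda)$ (as $c,\rho$ are fixed while $p$ is $C(n,N,\lambda)/r$-Lipschitz), and it is nonzero on $H_n(\,\cdot\,;\Z/2)$, hence surjective. If $\dimup P\le n$ this $\rho$ exists by Hopf's classification of maps into $\Sp^n$; in general $\dimup P$ can exceed $n$ (the multiplicity bound $K(N)$ is typically much larger than $n+1$), and here one cannot simply map $P$ onto $\Sp^n$ realizing the prescribed class. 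Overcoming this — in effect, replacing the nerve picture by an $n$-dimensional model of the ball that still carries the fundamental class, which requires deforming maps into lower skeleta with bounds coming from linear local contractibility and doubling — is precisely the step I expect to be the main obstacle, and it is the technical heart of Semmes's argument. It is also where the manifold hypothesis (and the restriction $n\ge 2$) is essential; accordingly we quote the result here rather than reprove it.
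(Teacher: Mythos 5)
The paper gives no proof of this statement: it is quoted verbatim from Semmes \cite{Sem96} and used as a black box in the discussion that follows. Your proposal, after a substantial sketch, also ends by deferring to Semmes, so in that sense you land in the same place as the paper. The sketch is a reasonable reconstruction of the quantitative-topology framework underlying Semmes's argument: a controlled cover at scale $\delta r$, a Lipschitz map to its nerve via the partition of unity, doubling to bound the nerve's combinatorial complexity, and linear local contractibility to fill simplices and build an approximate inverse. You also correctly flag the genuine obstacle, namely that the nerve has dimension on the order of the multiplicity $K(N)$, typically far larger than $n$, so producing a degree-nonzero map onto $\Sp^n$ requires collapsing to an $n$-dimensional model while preserving the fundamental class and Lipschitz control; that is indeed the technical heart of Semmes's proof. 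Two smaller points would also need more care in a complete argument: the assertion that $p$ carries all of $X \setminus B(x,r/2)$ into the closed star of $e_\infty$ can fail for points near the sphere of radius $r/2$ unless the cover is adjusted so that $\phi_\infty>0$ on that whole set, and the claimed homotopy $q\circ p'\simeq \id_X$ is not a routine obstruction computation since $X$ is only a topological manifold with no given CW structure — one would instead invoke a controlled statement that maps uniformly close to the identity on a linearly locally contractible space are homotopic to it, which is itself part of the quantitative-topology toolbox. None of this creates a gap in the paper, which treats the theorem as a cited fact, and your honest identification of where the real difficulty lies is the appropriate conclusion.
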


Here, $N$-doubling means that every metric ball of radius $r >0$ can be covered by at most $N$ balls of radius $r/2$. It functions as a finite-dimensionality condition for metric spaces. From Semmes's theorem, we immediately obtain an estimate of the form in \eqref{volbound}. Indeed, as $f$ has non-zero degree, $f(B(x,r)) = \Sp^n$, so the Lipschitz bounds imply that
$$\mathcal{H}_n^{\infty}(B(x,r)) \geq \lp \tfrac{r}{C} \rp^n \mathcal{H}_n^{\infty}(\Sp^n) \gtrsim r^n,$$
where the implicit constant depends only on $n$, $N$, and $\lambda$. 

This ``Semmes approach" to lower volume bounds is, in a sense, dual to our approach, which seeks to map a nice space \ti{into $X$}, rather than map $X$ into some other controlled space. The relative ease of building Lipschitz maps from a general metric space into Euclidean spaces (for example, as we did in Section \ref{prelim}) makes the Semmes method viable. Still, it would be desirable to make our dual argument work. In the next section, we carry this out for metric surfaces. The methods we use also allow us to extend \cite[Proposition 4.1]{Kin14}, which gives conditions under which a metric space is bi-Lipschitz equivalent to a ``snowflake."

\section{Fat squares in metric surfaces} \label{n=2sec}

Our work in the previous sections was originally motivated by quasiconformal uniformization problems for metric spaces. Here, one attempts to find global parameterizations of metric spaces which enjoy some sort of ``analytic regularity." These problems are especially important to the geometric study of boundaries of hyperbolic groups. In many situations, though, the metric spaces contain no rectifiable curves, so standard notions of length are useless in this analysis.

The theory that has developed around quasiconformal uniformization problems is quite deep and is especially rich for surfaces. The following theorem is characteristic of the subject. It gives conditions under which a topological sphere must actually be a \ti{quasi-sphere}, i.e., must be a quasisymmetric image of the Euclidean sphere.

\begin{theorem}[Bonk--Kleiner \cite{BK02qs}] \label{BKqs}
Suppose that $(X,d)$ is homeomorphic to $\Sp^2$, is Ahlfors 2-regular, and is linearly locally contractible. Then $X$ is quasisymmetrically equivalent to $\Sp^2$.
\end{theorem}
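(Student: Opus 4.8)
The plan is to follow the discrete uniformization strategy of Bonk and Kleiner: approximate $X$ at scales $\e_m \to 0$ by triangulations of $\Sp^2$, uniformize each triangulation by a circle packing via the Koebe--Andreev--Thurston theorem, and pass to a limit. The two hypotheses enter precisely in supplying control that is \emph{uniform in $m$}: Ahlfors $2$-regularity (which in particular makes $X$ doubling) bounds the combinatorial complexity of the approximations, while linear local contractibility both guarantees that the nerves of good covers are genuine triangulations of the sphere and, together with $2$-regularity, forces the circle packings to be metrically comparable to $X$ at every scale.

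First I would fix a maximal $\e_m$-separated set $\{x_v\}_{v \in V_m}$ in $X$ and form a cover by slightly enlarged balls $B(x_v, c\e_m)$; after a small perturbation (and, if necessary, a barycentric-type refinement as in Section~\ref{prelim}) one arranges that the nerve $\mathcal{T}_m$ has no degenerate simplices. Doubling gives a valence bound $\deg_{\mathcal{T}_m}(v) \le N_0$ with $N_0$ independent of $m$, and $2$-regularity gives $\#V_m \asymp \e_m^{-2}$. The key topological input is that for $m$ large, linear local contractibility lets one choose the cover so that all nonempty intersections of its members are contractible, whence the standard nerve theorem identifies $|\mathcal{T}_m|$ with $X$; being a simplicial $2$-sphere, $\mathcal{T}_m$ is then a triangulation of $\Sp^2$.

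Next, by the Koebe--Andreev--Thurston theorem each $\mathcal{T}_m$ is realized, uniquely up to a M\"{o}bius transformation of $\Sp^2$, by a circle packing $\mathcal{P}_m = \{C_v\}_{v \in V_m}$ on the round sphere with tangency graph $\mathcal{T}_m$; write $z_v, r_v$ for the center and radius of $C_v$. The Rodin--Sullivan ring lemma, applied with the uniform valence bound $N_0$, yields $\Lambda_0 = \Lambda_0(N_0)$ with $r_v/r_w \le \Lambda_0$ for every edge $vw$ of $\mathcal{T}_m$. Let $f_m \colon \Sp^2 \to X$ be the homeomorphism that sends $z_v$ to $x_v$ and is interpolated across the packing in a controlled (e.g.\ barycentric) fashion, so that $f_m(C_v)$ is comparable to $B(x_v, \e_m)$. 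The heart of the matter --- and the step I expect to be by far the hardest --- is to prove that there is $\eta$, \emph{independent of $m$}, such that every $f_m$ is $\eta$-quasisymmetric; equivalently, that the packing radii $r_v$ are comparable, with uniform constants, to the radii forced by the metric $d$. This I would establish by contradiction and a blow-up argument: if the estimate fails there are scales and points at which the packing geometry and the metric geometry diverge by arbitrarily large factors; rescaling $X$ there and passing to a pointed Gromov--Hausdorff limit produces a space that is still Ahlfors $2$-regular and linearly locally contractible (both survive such limits), while the rescaled packings converge to a bounded-valence circle packing of the plane. One then derives a contradiction from the rigidity of bounded-valence plane packings (Rodin--Sullivan, He--Schramm) together with the incompatibility of that rigid packing with the $2$-regular, linearly locally contractible metric it would have to carry. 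Setting up the right rescalings, identifying the limiting objects, and extracting the contradiction is the genuinely technical part of the argument.

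With the uniform bound in hand, the family $\{f_m\}$ is precompact: after normalizing by M\"{o}bius transformations of $\Sp^2$ so as to pin down three points, a uniformly $\eta$-quasisymmetric family of embeddings of the compact space $\Sp^2$ has a subsequence converging uniformly to an $\eta$-quasisymmetric map $f \colon \Sp^2 \to X$. Since the $f_m$ are surjective with images $\e_m$-dense in $X$, the limit $f$ is a homeomorphism onto $X$, and hence a quasisymmetric equivalence $\Sp^2 \to X$, as desired. (None of this is a consequence of the earlier results in the paper: Corollary~\ref{imagecube} and Theorem~\ref{semmes} provide only one-sided volume bounds, whereas the Bonk--Kleiner theorem requires constructing a global homeomorphism with two-sided metric control.)
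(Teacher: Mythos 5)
The paper does not prove this theorem---it is quoted as a known result of Bonk and Kleiner, and only its \emph{statement} is used (to motivate Corollary~\ref{nogroupcor} and to contrast with Theorem~\ref{BK}). So there is no in-paper proof to compare against; I will instead compare your proposal to the argument in \cite{BK02qs}.

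Your high-level architecture (discretize at scales $\e_m$, uniformize each discrete approximation, prove a scale-independent quasisymmetry bound, pass to a limit after normalizing by M\"obius transformations) does match the shape of the Bonk--Kleiner proof, and you are right that the uniform quasisymmetry estimate is the crux. But the way you propose to obtain it --- a blow-up/Gromov--Hausdorff compactness argument resolved by circle-packing rigidity --- is not how Bonk and Kleiner argue, and as stated it has genuine gaps. First, a pointed Gromov--Hausdorff limit of the rescaled spaces $(\tfrac{1}{\e_{m_j}}X, x_j)$ need not be a topological plane or even a surface: neither the homeomorphism type nor linear local contractibility is stable under Gromov--Hausdorff convergence in the absence of additional uniform estimates (indeed, producing such stability is essentially as hard as the theorem itself). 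Second, even granting a planar limit carrying a limiting bounded-valence packing, He--Schramm/Rodin--Sullivan rigidity constrains the packing \emph{in the Euclidean plane}, not its relation to the limiting metric $d_\infty$; you would still need a separate quantitative comparison between packing radii and $d_\infty$-radii, which is exactly what you set out to prove. What Bonk and Kleiner actually do is avoid compactness entirely: Ahlfors $2$-regularity gives, by a covering argument, an \emph{upper} bound on the combinatorial $2$-modulus of separating curve families in the nerve, and the LLC conditions give a matching \emph{lower} bound (a Loewner-type estimate); these two-sided modulus bounds transfer directly between $X$, the nerve, and the round sphere, yielding the uniform quasisymmetry constant without any limiting argument. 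So the proposal identifies the right skeleton but replaces the load-bearing modulus estimate by a compactness scheme that does not close.
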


A metric space $X$ is said to be Ahlfors $Q$-regular if every metric ball $B(x,r)$ with radius $0 < r \leq \diam X$ satisfies the volume estimate $\mathcal{H}_{Q}(B(x,r)) \approx r^Q$, with a uniform implicit constant. Recall from the previous section that linear local contractibility means that every metric ball can be contracted within a slightly larger ball to a point. As quasisymmetric maps distort relative distances by a controlled amount, they preserve linear local contractibility. Of course, the Euclidean sphere $\Sp^2$ is linearly locally contractible, so every quasi-sphere is as well. Thus, the more restrictive hypothesis in the theorem above is that of Ahlfors 2-regularity. Later in this section, we will extend Theorem \ref{BKqs} outside of the $2$-regular setting.

For now, though, let us discuss connectivity conditions that are related to linear local contractibility. A common theme among them is that they appear in the study of boundaries of hyperbolic groups. We first list them and then discuss how they relate to each other. For a metric space $(X,d)$ and $\lambda \geq 1$, we say that
\begin{enumerate}
\item[\textup{(i)}] $X$ is $\lambda$-$\LLC_1$ if, for each $p \in X$ and $0 < r \leq \diam X$, any two points $x,y \in B(p,r)$ can be joined by a continuum in $B(p,\lambda r)$;
\item[\textup{(ii)}] $X$ is $\lambda$-$\LLC_2$ if, for each $p \in X$ and $0 < r \leq \diam X$, any two points $x,y \in X \backslash B(p,r)$ can be joined by a continuum in $X \backslash B(p, r/\lambda)$;
\item[\textup{(iii)}] $X$ is $\lambda$-annularly linearly connected if it is connected and, for each $p \in X$ and $0 < r \leq \diam X$, any two points $x,y \in A(p,r,2r)$ can be joined by a continuum in $A(p,r/\lambda, 2\lambda r)$.
\end{enumerate}
Here, we use $A(p,r,R) = \cl{B}(p,R) \backslash B(p,r)$ to denote the (closed) metric annulus centered at $p$ with inner radius $r > 0$ and outer radius $R > r$. Recall that a continuum is simply a compact connected set. The ``LLC" acronym in $\LLC_1$ and $\LLC_2$ stands for ``linearly locally connected" and should not be confused with linear local contractibility. For convenience, we will also use the acronym ``ALC" in place of ``annularly locally connected." This third condition is the strongest of the three; it was introduced by J.~Mackay in \cite{Mack10}.

\begin{lemma} \label{ALC}
If $X$ is $\lambda$-$\ALC$, then it is $\lambda'$-$\LLC_1$ and $\lambda'$-$\LLC_2$ for some $\lambda'$, depending only on $\lambda$.
\end{lemma}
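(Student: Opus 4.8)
The plan is to deduce both $\LLC_1$ and $\LLC_2$ from the single $\ALC$ condition by covering the relevant ball or complement with a chain of overlapping annuli and applying the annular connectivity on each one. Throughout, fix $\lambda \geq 1$ and suppose $X$ is $\lambda$-$\ALC$; in particular $X$ is connected.

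\textbf{The $\LLC_1$ estimate.} First I would handle condition (i). Fix $p \in X$, a radius $0 < r \leq \diam X$, and two points $x, y \in B(p,r)$. The idea is to join each of $x$ and $y$ to a common ``reference sphere'' near $p$. If $x$ is very close to $p$, we first need to push it outward: choose a point $x'$ with $d(p,x') \approx d(p,x)$ lying in a thin annulus around $p$ (using connectedness of $X$ to guarantee such a point exists at essentially every scale below $r$), and observe that $x$ itself lies in the annulus $A(p, d(p,x)/2, 2\,d(p,x))$, so $\ALC$ connects $x$ to $x'$ inside $A(p, d(p,x)/(2\lambda), 4\lambda\, d(p,x)) \subset B(p, 4\lambda r)$. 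Next, telescoping: pick a geometric sequence of radii $r_j = 2^{-j} r$ between (roughly) $d(p,x)$ and $r$; for consecutive values the annuli $A(p,r_{j+1},2r_{j+1})$ and $A(p,r_j,2r_j)$ overlap, so $\ALC$ lets us hop from a point at level $r_{j+1}$ to a point at level $r_j$ staying inside $A(p, r_j/(2\lambda), 4\lambda r_j) \subset B(p, 4\lambda r)$. Concatenating these finitely many continua (the sequence terminates once $r_j \gtrsim d(p,x)$, and again once $r_j \approx r$) produces a continuum from $x$ to some fixed point $x_0$ with $d(p,x_0) \approx r$ contained in $B(p, 4\lambda r)$. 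Doing the same for $y$ and joining $x_0$ to $y_0$ (both lie in $A(p,r/4, 4r)$, say, so one more application of $\ALC$ suffices) gives the desired continuum; its union is compact and connected, and lies in $B(p,\lambda' r)$ for $\lambda'$ a dimensionless multiple of $\lambda$.

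\textbf{The $\LLC_2$ estimate.} Condition (ii) is the mirror image. Fix $p$, $r$, and $x,y \in X \backslash B(p,r)$. If $\diam X \leq C r$ for an appropriate constant then $X$ itself is a continuum of bounded diameter avoiding $B(p, r/\lambda')$ for suitable $\lambda'$, so assume $x,y$ are at scales comparable to or larger than $r$ but finite (both lie in $\cl B(p,\diam X)$). Now telescope \emph{outward} from the scale of $x$ down to scale $r$: choose radii $r_j$ forming a geometric sequence between $r$ and $d(p,x)$ (and symmetrically for $y$), note that consecutive annuli $A(p,r_j,2r_j)$ overlap, and apply $\ALC$ on each to build a continuum from $x$ to a reference point $x_0$ at scale $\approx r$, everywhere staying in the union of the enlarged annuli $A(p, r_j/\lambda, 2\lambda r_j)$, all of which avoid $B(p, r/\lambda)$ since $r_j \geq r$. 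The same for $y$, then a final application of $\ALC$ on the annulus at scale $r$ joins $x_0$ to $y_0$. The resulting continuum avoids $B(p, r/\lambda')$ with $\lambda'$ depending only on $\lambda$.

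\textbf{Main obstacle.} The routine part is the telescoping and keeping track of constants. The point requiring slight care is the existence of intermediate points at prescribed scales: $\ALC$ only asserts connectivity between points that are \emph{given} to lie in an annulus, so one must first produce such points. This is where connectedness of $X$ enters — a connected space meeting both $B(p,s)$ and $X \backslash B(p,t)$ must meet every annulus in between — together with the (harmless) observation that if $X$ has no points at some scale, then $\diam X$ is small relative to $r$ and the statement is trivial with an inflated $\lambda'$. Handling the degenerate cases (when $x$ or $y$ is essentially at the center, or when $\diam X$ is comparable to $r$) cleanly, so that a single $\lambda'$ works uniformly, is the only genuine bookkeeping obstacle.
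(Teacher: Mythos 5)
Your telescoping plan is the same basic technique the paper uses, but the two proofs are organized differently, and your version of the $\LLC_1$ argument has a real gap that the paper's organization avoids. For $\LLC_2$ the paper telescopes from $x$ outward through the scales between $d(p,x)$ and $d(p,y)$ and then jumps to $y$; you instead telescope $x$ and $y$ separately down to a reference scale $\approx r$ and join them there. Both work, though your preliminary claim that ``if $\diam X \leq Cr$ then $X$ itself is a continuum ... avoiding $B(p,r/\lambda')$'' is false as written ($X$ contains that ball); the point is simply that the number of telescoping steps is automatically bounded by $\log_2(\diam X/r)$, so no separate case is needed.

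The real difference is in $\LLC_1$. The paper does \emph{not} telescope around $p$; it re-centers at a point $q$ chosen (by connectedness) with $d(x,q) = d(x,y)/2$. Then both $x$ and $y$ lie in the fixed annulus $A(q, d(x,y)/4, 3d(x,y)/2)$, the $\LLC_2$-style argument runs around $q$ with the inner radius bounded away from zero, and the resulting continuum is trivially inside $B(p, 5\lambda' r)$. Your approach telescopes around $p$ itself, which forces you to confront the case $d(p,x) = 0$ (or $d(p,x)$ arbitrarily small relative to $r$): the sequence of annuli $A(p, 2^{-j}r, 2^{-j+1}r)$ never reaches $x$, and $\ALC$ gives you no way to start a chain at $p$ since $p$ lies in no annulus around itself. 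You call this ``bookkeeping,'' but making it work requires an extra idea — for instance, showing that the infinite union $\{p\} \cup \bigcup_j E_j$ of continua shrinking toward $p$ is still compact and connected. That is a legitimate argument (sequential compactness follows because any sequence either has a subsequence trapped in one $E_j$ or converges to $p$), but it needs to be said; as written, the proposal only asserts that the telescoping ``terminates once $r_j \gtrsim d(p,x)$,'' which fails precisely in this case. The paper's re-centering eliminates the issue entirely and is the cleaner route.
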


\begin{proof}
We first verify the $\LLC_2$ property. Let $p \in X$, let $0 < r \leq \diam X$, and fix $x,y \in X \backslash B(p,r)$. Without loss of generality, suppose that $R = d(p,x) \leq d(p,y)$, and let $n$ be the largest integer for which $2^nR < d(p,y)$. Let $x_0 = x$, and for each $k \leq n$, choose $x_k \in X$ with $d(p,x_k) = 2^kR$. This is possible because $X$ is connected. Finally, let $x_{n+1}= y$. The ALC condition guarantees that for each $0 \leq k \leq n$, there is a continuum 
$$E_k \subset A \lp p,\tfrac{2^kR}{\lambda'}, 2^{k+1}R\lambda' \rp$$
connecting $x_k$ and $x_{k+1}$, as long as $\lambda' > \lambda$. In particular, the continuum $E=E_0 \cup \cdots \cup E_n$ connects $x$ and $y$ and is contained in $X \backslash B(p,r/\lambda')$. Thus, $X$ is $\lambda'$-$\LLC_2$ for every $\lambda' > \lambda$.

To verify the $\LLC_1$ condition, fix $x,y \in B(p,r)$. By the connectivity of $X$, we may choose $q \in X$ for which $d(x,q) = d(x,y)/2$. Then
$$x,y \in A \lp q, \tfrac{d(x,y)}{4}, \tfrac{3d(x,y)}{2} \rp,$$
and using the same technique as in the previous paragraph, it is not difficult to show that $x$ and $y$ can be connected by a continuum
$$E \subset A \lp q, \tfrac{d(x,y)}{4\lambda'}, \tfrac{3\lambda' d(x,y)}{2} \rp,$$
where $\lambda'$ depends only on $\lambda$. In particular, $E \subset B(p, 5\lambda' r)$. Thus, we see that $X$ is $5\lambda'$-$\LLC_1$.
\end{proof}

When $X$ has some topological regularity, there are close relationships between the $\LLC_1$ and $\LLC_2$ conditions, linear local contractibility, and the ALC condition.

\begin{lemma} \label{LLC}
For $(X,d)$ a closed, connected manifold of dimension $n \geq 2$, the following are true.
\begin{enumerate}
\item[\textup{(i)}] If $X$ is $\lambda$-linearly locally contractible, then it is $\lambda'$-$\LLC_1$ and $\lambda'$-$\LLC_2$ for all $\lambda' > \lambda$.
\item[\textup{(ii)}] If $n=2$ and $X$ is both $\LLC_1$ and $\LLC_2$, then it is linearly locally contractible.
\item[\textup{(iii)}] If $X$ is $\lambda$-linearly locally contractible, then it is $\lambda'$-$\ALC$ for all $\lambda' > \lambda$.
\end{enumerate}
\end{lemma}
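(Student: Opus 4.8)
\textbf{Proof plan for Lemma \ref{LLC}.}

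The plan is to handle the three statements in sequence, using standard manifold topology together with the purely metric arguments already developed. For (i), I would argue exactly as in Lemma \ref{ALC}: linear local contractibility gives, for each ball $B(p,r)$ with $r$ small, a contraction inside $B(p,\lambda r)$; running such a contraction produces, for any $x,y$ in a ball, a path between them staying in a controlled larger ball, which is the $\LLC_1$ conclusion. For $\LLC_2$, the point is that for $x,y \in X \backslash B(p,r)$ one chains together contractions of small balls along a path from $x$ to $y$ in the connected manifold $X$, keeping each contraction away from $p$ by a definite factor; this is the same telescoping trick used in the proof of Lemma \ref{ALC}. (In fact, once (iii) is proved, (i) follows from Lemma \ref{ALC}, so I may simply defer (i) to the end, or note the shortcut.) The mild subtlety is taking care of the case $r$ comparable to $\diam X$ by the usual ``blow up the constant'' adjustment, and the fact that $\LLC_1$/$\LLC_2$ are only required up to $r \le \diam X$.

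The heart of the lemma, and the step I expect to be the main obstacle, is part (ii): the passage from $\LLC_1 + \LLC_2$ back to linear local contractibility in dimension $n = 2$. Here metric hypotheses alone are not enough — one must use that $X$ is a closed surface, so that small metric balls are ``almost'' topological disks. The strategy: given $B(p,r)$ for $r$ small relative to $\diam X$, I want to contract it inside $B(p,\lambda r)$. Using $\LLC_1$, any loop in $B(p,r)$ bounds a ``chain'' of short arcs in a slightly larger ball; using $\LLC_2$, the complement is connected on the appropriate scale, so the loop does not separate far-away points. On a surface, a null-homotopy of the loop can then be produced by filling it in: the region it bounds is controlled because, by $\LLC_2$, it cannot reach out past radius $\lambda r$. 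Concretely I would invoke a Schoenflies/Jordan-curve-type argument for metric surfaces — this is essentially the mechanism behind the Bonk--Kleiner circle-domain arguments and appears in Mackay \cite{Mack10} — to show that the bounded complementary component of a suitable ``near-geodesic'' loop around $\partial B(p,r)$ lies inside $B(p,\lambda r)$, and then contract $B(p,r)$ through that disk. The delicate part is quantitative control of the diameter of the disk bounded by the loop, which is exactly where $\LLC_2$ enters: a point of that disk far from $p$ would, together with a point near $p$, violate the $\LLC_2$ separation estimate.

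Finally, for (iii): I would show directly that a $\lambda$-linearly locally contractible closed manifold is $\lambda'$-$\ALC$ for $\lambda' > \lambda$. Fix $p$, a radius $r$, and $x,y \in A(p,r,2r)$. Connect $x$ to $y$ by a path in $X$; cover the path by finitely many small balls $B(q_i, \epsilon_i)$ with $\epsilon_i$ chosen small enough (relative to $d(p,q_i)$ and to $r$) that contracting each ball inside $B(q_i,\lambda \epsilon_i)$ never reaches radius below $r/\lambda'$ or above $2\lambda' r$ from $p$; concatenating the resulting short arcs yields a continuum joining $x$ and $y$ inside $A(p, r/\lambda', 2\lambda' r)$. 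The only care needed is near the inner and outer radii, where $\epsilon_i$ must be taken proportionally small; since the path is compact this is a finite, uniform choice, and the constant $\lambda'$ can be taken to be any number exceeding $\lambda$. This completes the proof; I would present (iii) before (i) and then obtain (i) as an immediate consequence of (iii) and Lemma \ref{ALC} if I wish to avoid repeating the telescoping argument.
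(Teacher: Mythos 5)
Your proposal diverges from the paper's proof at the critical step, part (iii), and the divergence introduces a genuine gap. The paper does not prove (i) and (ii) at all — it simply cites Bonk--Kleiner \cite[Lemma~2.5]{BK02qs} — and devotes the whole proof to (iii), which it establishes via a rather heavy but necessary piece of algebraic topology: it reduces the annular connectivity statement to showing that the inclusion-induced map $\tilde{H}_0(U\setminus B(p,r)) \to \tilde{H}_0(U\setminus B(p,r/\lambda'))$ vanishes, where $U$ is the path component of $p$ in $B(p,2\lambda' r)$. This is converted, via the long exact sequence of the pair and excision, to a statement about $H_1(X, X\setminus K_i)$, which Poincar\'e duality identifies with $\check{H}^{n-1}(K_i)$; the vanishing then follows because the smaller compact ball is contractible inside the larger one. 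The point of this machinery is precisely to certify that the small ball $\overline{B}(p,r/\lambda')$ does not locally separate $X$ — a fact that genuinely requires the manifold hypothesis and is not a formal consequence of contractibility of balls.

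Your proposed argument for (iii) — connect $x$ to $y$ by a path in $X$, cover it by tiny balls, contract each inside a comparable ball, and concatenate short arcs — does not address this obstruction. Connectedness of $X$ only gives you \emph{some} path from $x$ to $y$, and nothing prevents that path from passing through or arbitrarily near $p$. If a covering ball $B(q_i,\epsilon_i)$ has $q_i$ within $r/\lambda'$ of $p$, no choice of $\epsilon_i$ fixes the problem: the resulting local arc still passes through $q_i$. Pushing the path off the forbidden region $B(p,r/\lambda')$ is exactly the annular connectivity you are trying to prove, so the chaining argument is circular at this step. What is missing is a mechanism to show the complement of the small ball remains connected near $p$; in the paper this is supplied by Poincar\'e duality, and I do not see how to avoid some such topological input.

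Two smaller points. First, your derivation of (i) from (iii) together with Lemma~\ref{ALC} is a clean shortcut and is consistent with the paper; but since the paper takes (i) and (ii) as cited facts, the shortcut is moot there. Second, your sketch for (ii) (Schoenflies-type filling on a surface, with $\LLC_2$ controlling the diameter of the bounded disk) is plausible in spirit, but as written it is too vague to evaluate; in particular the paper does not attempt an independent proof of (ii), and if you wish to include one you would need to make precise both the existence of a suitable near-geodesic loop separating scales and the quantitative diameter bound on the disk it bounds.
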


\begin{proof}
Parts (i) and (ii) are directly from \cite[Lemma 2.5]{BK02qs}. Part (iii) is effectively a modification of the proof of (i), along with excision. Fix $p \in X$ and $0 < r \leq \diam X$, and let $\lambda' > \lambda$. As a first case, suppose that $r > \diam X/\lambda$, so $B(p,2\lambda'r) = X$. Part (i) implies that $X$ is $\lambda'$-$\LLC_2$, so any two points in $A(p,r,2r)$ can be joined by a continuum in $X \backslash B(p,r/\lambda') = A(p,r/\lambda',2\lambda' r)$. Thus, we may assume that $r \leq \diam X/\lambda$.

Let us define 
$$U = \{ x \in X : x \text{ can be joined to } p \text{ by a path in } B(p, 2\lambda'r) \},$$
which is an open and path connected subset of $B(p,2\lambda'r)$. Part (i) implies that $X$ is $\lambda''$-$\LLC_1$ for any $\lambda'' > \lambda$, and this ensures that $\cl{B}(p,2r) \subset U$. It therefore suffices to show that any two points in $U \backslash B(p,r)$ can be joined by a path in $U \backslash B(p,r/\lambda')$. In terms of reduced homology groups, this is equivalent to the homomorphism 
$$i_{\ast} \colon \tilde{H}_0(U \backslash B(p,r)) \rightarrow \tilde{H}_0(U \backslash B(p,r/\lambda')),$$
induced by the inclusion map, being identically zero. Here we use singular homology with coefficients in $\Z_2$ so that there are no issues with orientation.

As $\lambda' > \lambda$, we may choose $0 < r' < r$ such that $\cl{B}(p, r/\lambda') \subset B(p,r'/\lambda)$. Let $K_1= \cl{B}(p,r/\lambda')$ and $K_2 = \cl{B}(p,r')$ so that both $K_1$ and $K_2$ are compact and 
\begin{equation} \label{contain}
B(p,r/\lambda') \subset K_1 \subset K_2 \subset B(p,r) \subset U.
\end{equation}
Note, additionally, that $K_1 \subset B(p,r'/\lambda)$, which implies that $K_1$ can be contracted within $K_2$ to a point. The containments in \eqref{contain} guarantee that $i_{\ast}$ above will be zero if the homomorphism $\tilde{H}_0(U \backslash K_2) \rightarrow \tilde{H}_0(U \backslash K_1)$ induced by inclusion is zero.

The long exact sequence for relative homology \cite[p.\ 117]{Hat02}, together with path connectedness of $U$, gives that
$$\ldots \rightarrow H_1(U,U\backslash K_i) \overset{\partial_i}{\rightarrow} \tilde{H}_0(U\backslash K_i) \rightarrow \tilde{H}_0(U) = 0$$
is exact for $i=1,2$. Thus, $\partial_i$ is surjective. These sequences are natural with respect to inclusions, so triviality of $\tilde{H}_0(U \backslash K_2) \rightarrow \tilde{H}_0(U \backslash K_1)$ would be guaranteed by triviality of the homomorphism $H_1(U,U\backslash K_2) \rightarrow H_1(U,U\backslash K_1)$, also induced by inclusion.

Applying excision \cite[p.\ 119]{Hat02} to the set $X \backslash U$, which is compact and contained in $X\backslash K_i$, we find that the inclusion $(U,U\backslash K_i) \hookrightarrow (X,X\backslash K_i)$ induces an isomorphism $H_1(U,U\backslash K_i) \rightarrow H_1(X,X\backslash K_i)$.
It is not difficult to see that this isomorphism is natural with respect to inclusions (indeed, it is induced by an inclusion!), so the following diagram commutes:
$$\begin{matrix}
H_1(U,U\backslash K_2) & \rightarrow & H_1(X,X\backslash K_2) \\
\downarrow                    &          			&  \downarrow \\
H_1(U,U\backslash K_1) & \rightarrow & H_1(X,X\backslash K_1)
\end{matrix}$$
where the vertical maps come from the inclusions $(U, U\backslash K_2) \hookrightarrow (U,U\backslash K_1)$ and $(X, X\backslash K_2) \hookrightarrow (X,X\backslash K_1)$. Thus, in order to show that the vertical map on the left is the zero homomorphism, it suffices to prove this for the vertical map on the right. Our goal, then, is to show that $H_1(X,X\backslash K_2) \rightarrow H_1(X,X\backslash K_1)$ is trivial.

For this, we use Poincar\'e duality \cite[p.\ 296]{Spa81}. As $K_i$ is compact, there is an isomorphism $H_1(X,X\backslash K_i) \simeq \check{H}^{n-1}(K_i)$, where $\check{H}^{k}$ denotes the $k$-th Cech cohomology group with $\Z_2$ coefficients. Once again, this isomorphism is natural with respect to inclusions, so triviality of $H_1(X,X\backslash K_2) \rightarrow H_1(X,X\backslash K_1)$ is equivalent to triviality of the homomorphism $\check{H}^{n-1}(K_2) \rightarrow \check{H}^{n-1}(K_1)$ induced by the inclusion $K_1 \hookrightarrow K_2$. Recall, though, that $K_1$ is contractible inside $K_2$, so this latter homomorphism is indeed trivial.

We conclude, then, that the original homomorphism $i_{\ast}$ is identically zero, which means that any two points in $U \backslash B(p,r)$ can be joined by a path in $U\backslash B(p,r/\lambda')$. As $\cl{B}(p,2r) \subset U \subset B(p,2\lambda'r)$, this implies that $X$ is $\lambda'$-ALC.
\end{proof}

A major motivation for introducing these types of connectivity conditions is that they appear in the analysis of hyperbolic groups. Namely, if $G$ is a hyperbolic group and $\bdry G$ denotes its boundary at infinity equipped with a visual metric, then under suitable topological hypotheses, $\bdry G$ will satisfy all of these conditions. For example, if $\bdry G$ is non-empty, connected, and has no local cut points (equivalently, $G$ does not split over a finite group or over a virtually cyclic group), then it is ALC \cite[Proof of Corollary 1.2]{Mack10}. Similarly, if $\bdry G$ is a connected manifold, then it is linearly locally contractible (\cite[Theorem 4.4]{KapB01}, along with \cite[Theorem 3.3]{Kle06}).

Recall from the previous section that $(X,d)$ is said to admit fat $n$-cubes if there is $\lambda \geq 1$ such that, for each $x \in X$ and $0< r \leq \diam X$, there is a continuous map $g \colon [0,1]^n \rightarrow B(x, r)$ with $\dist(g(F_k),g(F_k')) \geq r/\lambda$ for each $k$. The following condition is slightly stronger and is related to the connectivity conditions above.

\begin{definition}
We say that $(X,d)$ \ti{admits fat connecting $n$-cubes} if there is $\lambda \geq 1$ such that for any two distinct points $x,y \in X$ with $d(x,y) \leq (\diam X)/\lambda$, there is a continuous map $g \colon [0,1]^n \rightarrow B(x,\lambda d(x,y))$ with $g(F_1) \subset B(x,d(x,y)/4)$ and $g(F_1') \subset B(y,d(x,y)/4)$, and which has $\dist(g(F_k),g(F_k')) \geq d(x,y)/\lambda$ for each $1 \leq k \leq n$.
\end{definition}

When $n=2$, we use the terminology ``fat connecting squares" in place of ``fat connecting $2$-cubes". The following theorem of J. Mackay will be helpful in comparing the ``fat squares" condition to the other connectivity conditions. Recall that $X$ is said to be $N$-doubling if every ball of radius $r>0$ can be covered by at most $N$ balls of radius $r/2$. An arc $\gamma$ in $X$ is called an $\alpha$-quasiarc if for each pair $x,y \in \gamma$, the sub-arc between them has diameter at most $\alpha d(x,y)$. Similarly, a topological circle in $X$ is called an $\alpha$-quasicircle if for each pair of points $x,y$ on it, there is a sub-arc between them with diameter at most $\alpha d(x,y)$.

\begin{theorem}[Mackay {\cite[Theorem 1.4]{Mack13}}] \label{quasicircle}
Suppose that $(X,d)$ is a complete, $N$-doubling, $\lambda$-$\ALC$ metric space. For any $m \in \N$, there is $\alpha = \alpha(N,\lambda,m) \geq 1$ such that any two distinct points $x,y \in X$ can be joined by $m$ different $\alpha$-quasiarcs, such that the concatenation of any two of them is an $\alpha$-quasicircle.
\end{theorem}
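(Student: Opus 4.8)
The plan is to construct the $m$ arcs by a simultaneous dyadic refinement process and to read off the arc, quasiarc, and quasicircle properties from the geometric-decay estimates built into the construction; by Lemma \ref{ALC}, $X$ is $\lambda'$-$\LLC_1$ and $\lambda'$-$\LLC_2$ for some $\lambda' = \lambda'(\lambda)$, and these are the forms of the hypothesis I would use. First I would treat the case $m = 1$, which carries the core of the argument and is essentially the known fact that a complete, $N$-doubling, $\ALC$ space is uniformly quasi-arc connected. Fix $x \ne y$, put $d = d(x,y)$, and start from the trivial chain $(x,y)$. Inductively, given a finite chain whose consecutive points are within a bounded factor of the current scale $s$, insert between each adjacent pair a point lying on an $\LLC_1$-continuum of diameter $\lesssim s$ joining them, chosen so that the two new links each have diameter at most (roughly) $s/2$. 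Iterating produces a nested sequence of chains; completeness makes the associated parametrizations uniformly Cauchy, so they converge to a path $\gamma$ from $x$ to $y$, and the geometric decay of link-diameters together with $N$-doubling (to preclude accumulation and control overlaps) shows that, after reparametrization, $\gamma$ is an arc whose sub-arc between any two of its points has diameter comparable to their distance --- i.e., an $\alpha_0$-quasiarc with $\alpha_0 = \alpha_0(N,\lambda)$.

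To get $m$ arcs I would run $m$ copies of this construction in parallel and symmetrically, subject to a separation invariant: with $\e = \e(m) > 0$ fixed in advance, at every stage the $m$ partial arcs are pairwise $\e s$-separated outside $B(x,\e d) \cup B(y,\e d)$, where $s$ is the current scale. Installing the invariant at the top scale uses the $\ALC$ condition to route the $m$ arcs through $m$ intermediate points placed $\gtrsim d$ apart in an annulus around the ``midpoint region''; preserving it under a refinement step requires choosing each inserted subdivision point within a controlled $\LLC_1$-continuum while staying a definite distance from the other $m-1$ current partial arcs, which is possible because (given that the top-scale routing was spread out) the relevant continuum has diameter comparable to the current scale. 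Below scale $\e d$, a separate local argument near $x$, and symmetrically near $y$, keeps the $m$ tails pairwise disjoint except at the endpoint itself. The outcome is $m$ quasiarcs $\gamma_1,\dots,\gamma_m$ --- constants now also depending on $m$ --- with $\gamma_j \cap \gamma_k = \{x,y\}$ for $j \ne k$, so that each $\gamma_j \cup \gamma_k$ is a Jordan curve. The quasicircle estimate for $\gamma_j \cup \gamma_k$ is then checked by cases: two points on the same $\gamma_i$ are handled by the quasiarc bound, while two points on different arcs are handled by observing that the separation forces them to lie at comparable ``scale'' from a common endpoint, so the two tails past that endpoint have comparable diameter and the sub-arc through it is short relative to the distance between the two points.

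I expect the main obstacle to be precisely the endpoint analysis: showing that the $m$ tails coalesce ``cleanly'' at $x$ (and at $y$), meaning disjoint except at the endpoint, with the quasicircle estimate surviving for pairs of points that lie on two different arcs but are both close to that endpoint. Away from the endpoints the separation invariant controls everything, but near $x$ and $y$ the arcs must come together, and producing a bound there that is uniform once $\e(m)$ has been absorbed into $\alpha$ is the delicate point; getting the top-scale routing and its propagation under refinement exactly right --- so that $\lambda$ and $N$ enter only through $\alpha$ --- is the other place where care is required. The remaining ingredients, namely the refinement estimates, the passage to the limit, and injectivity, are routine.
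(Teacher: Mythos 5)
This statement is Theorem \ref{quasicircle}, which the paper does not prove: it is cited directly from Mackay \cite[Theorem 1.4]{Mack13}. So there is no in-paper proof to compare against; I can only assess your sketch on its own terms, and with reference to the general methodology of Mackay's work on quasi-arcs (\cite{Mack08}) and quasi-circles (\cite{Mack13}).

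Your overall architecture is plausible and broadly in the spirit of Mackay's arguments: a dyadic refinement scheme for the $m=1$ case, followed by a parallel construction with a separation invariant. However, there is a genuine gap at the step where you claim that the separation invariant is preserved under refinement. You write that when refining a link of arc $j$, one can ``choose each inserted subdivision point within a controlled $\LLC_1$-continuum while staying a definite distance from the other $m-1$ current partial arcs, which is possible because \ldots the relevant continuum has diameter comparable to the current scale.'' This inference does not hold. Let $p,q$ be the endpoints of the link being refined, both at distance $\geq \e s$ from arc $k$. The $\LLC_1$-continuum $E$ joining $p$ to $q$ has diameter $\lesssim s$, but nothing in the $\LLC_1$ hypothesis prevents the interior of $E$ from dipping into, or even meeting, a small neighborhood of arc $k$: a continuum can have large diameter and still be mostly contained in a thin tube around another set. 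So the existence of a subdivision point on $E$ that is $\gtrsim \e s$ from arc $k$ does not follow from diameter bounds alone. This is not a technicality you can wave away with doubling either: doubling controls cardinalities of separated sets, not the topology of how continua can snake around each other.

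The deeper issue is that, early on, you replace the $\ALC$ hypothesis by $\LLC_1$ and $\LLC_2$ (via Lemma \ref{ALC}) and then work exclusively with those. But $\ALC$ is strictly stronger than the conjunction of $\LLC_1$ and $\LLC_2$, and the extra strength is exactly the ability to route a continuum \emph{around} an obstacle rather than merely connect two points within a ball or outside a ball. Preserving pairwise separation of $m$ simultaneously refined arcs is fundamentally a routing problem, and it is where the annular connectivity (not just linear local connectivity) must be used in an essential and quantitative way --- in Mackay's paper this is handled by a considerably more intricate construction than the one-line justification you give. Your identification of the endpoint analysis as the ``main obstacle'' is reasonable, but I would say the obstacle you have \emph{not} flagged --- that $\LLC_1$ does not give obstacle-avoiding continua --- is at least as serious, and as written the refinement step of your proof does not go through.
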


The following lemma tells us that linear local contractibility and annular linear connectivity give fat connecting squares.

\begin{lemma}
Let $(X,d)$ be a complete metric space that is $N$-doubling, $\lambda$-linearly locally contractible, and $\lambda'$-ALC. Then $X$ admits fat connecting squares with constant depending only on $N$, $\lambda$, and $\lambda'$.
\end{lemma}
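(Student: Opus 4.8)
Write $d=d(x,y)$ and let $\Lambda=\Lambda(N,\lambda,\lambda')\ge 1$ be a constant to be fixed at the end; it will be a fixed multiple of $\lambda\alpha^2$, where $\alpha$ is the quasiarc constant produced below. The plan is to realize the square as a reparametrized singular disk spanning a quasicircle through $x$ and $y$: the two ``short'' faces $F_1,F_1'$ will correspond to short arcs around $x$ and $y$, the two ``long'' faces $F_2,F_2'$ will be (the middle parts of) the two arcs of the quasicircle, and their separation will be forced purely by the quasicircle condition. The first step is to apply Theorem \ref{quasicircle} with $m=2$; its hypotheses (complete, $N$-doubling, $\lambda'$-$\ALC$) are exactly what we have assumed, so we obtain two $\alpha$-quasiarcs $\gamma_1,\gamma_2$ from $x$ to $y$, with $\alpha=\alpha(N,\lambda')$, whose concatenation $J=\gamma_1\cup\gamma_2$ is an $\alpha$-quasicircle. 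Then $\gamma_1\cap\gamma_2=\{x,y\}$, $J$ is a topological circle, and applying the quasiarc condition to the endpoints of each $\gamma_i$ gives $\diam\gamma_i\le\alpha d$, so $J\subset B(x,2\alpha d)$.

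The geometric heart of the argument is a separation estimate for the two arcs. Put $\rho=d/(5\alpha)$. I claim that any $p_1\in\gamma_1$ and $p_2\in\gamma_2$ that both lie outside $B(x,\rho)\cup B(y,\rho)$ satisfy $d(p_1,p_2)\ge\rho/\alpha$. Indeed, the $\alpha$-quasicircle property furnishes a subarc $A\subset J$ from $p_1$ to $p_2$ with $\diam A\le\alpha\,d(p_1,p_2)$; since $\gamma_1\cap\gamma_2=\{x,y\}$, any subarc of $J$ from a point of $\gamma_1$ to a point of $\gamma_2$ must contain $x$ or $y$, so $\diam A\ge\min\{d(p_1,x),d(p_1,y)\}\ge\rho$, and the claim follows.

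Next I would span $J$ by a controlled singular disk. Choosing $\Lambda$ large enough that $d\le(\diam X)/\Lambda$ forces $2\alpha d\le(\diam X)/\lambda$, linear local contractibility provides a homotopy $H\colon B(x,2\alpha d)\times[0,1]\to B(x,2\lambda\alpha d)$ from the inclusion to a constant map; restricting $H$ to $J\times[0,1]$ and collapsing $J\times\{1\}$ yields a continuous map $G\colon D^2\to B(x,2\lambda\alpha d)$ whose restriction to $\partial D^2$ is a homeomorphism onto $J$. Now reparametrize $D^2$ as $[0,1]^2$: on each $\gamma_i$ choose cut points $q_x^i,q_y^i$ — for instance, take $q_x^i=\gamma_i(t)$ with $t$ the last parameter at which $\gamma_i$ is within distance $\rho$ of $x$, and then $q_y^i$ the analogous point for $y$ on the remaining subarc — so that, by the $\alpha$-quasiarc condition, the subarc from $x$ to $q_x^i$ lies in $\overline{B}(x,d/5)$, the subarc from $q_y^i$ to $y$ lies in $\overline{B}(y,d/5)$, and the middle subarc $\beta_i=\gamma_i[q_x^i,q_y^i]$ avoids $B(x,\rho)\cup B(y,\rho)$. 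These four points split $\partial D^2\cong J$ cyclically into arcs $a_x,\beta_1,a_y,\beta_2$ with $a_x\subset\overline{B}(x,d/5)$ and $a_y\subset\overline{B}(y,d/5)$. Take a homeomorphism $\partial([0,1]^2)\to\partial D^2$ sending $F_1\mapsto a_x$, $F_2'\mapsto\beta_1$, $F_1'\mapsto a_y$, $F_2\mapsto\beta_2$ (corners to the cut points), extend it to a homeomorphism $[0,1]^2\to D^2$, and let $g$ be its composition with $G$. Then $g([0,1]^2)\subset B(x,2\lambda\alpha d)$; $g(F_1)=a_x\subset B(x,d/4)$ and $g(F_1')=a_y\subset B(y,d/4)$; $\dist(g(F_1),g(F_1'))\ge d-2d/5$; and $\dist(g(F_2),g(F_2'))=\dist(\beta_2,\beta_1)\ge\rho/\alpha=d/(5\alpha^2)$ by the separation estimate. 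Choosing $\Lambda$ to dominate $2\lambda\alpha$ and $5\alpha^2$ (for instance $\Lambda=10\lambda\alpha^2$) makes all of these at least $d/\Lambda$ and keeps the image in $B(x,\Lambda d)$, and since $\alpha=\alpha(N,\lambda')$ the constant $\Lambda$ depends only on $N$, $\lambda$, $\lambda'$.

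The step I expect to require the most care is the reparametrization: one must choose the cut points so that \emph{simultaneously} the two terminal pieces land in the prescribed small balls around $x$ and $y$ and the two middle pieces stay uniformly far from both $x$ and $y$ (so the quasicircle separation estimate applies to them), which involves a short argument with the $\alpha$-quasiarc inequality and the intermediate value theorem along each arc; and one must keep track of the fact that $G|_{\partial D^2}$ is a genuine homeomorphism onto $J$, so that $g$ really maps the four faces onto $a_x,a_y,\beta_1,\beta_2$ and the face images are exactly the sets whose locations and separations we have controlled. By contrast, the separation estimate and the disk-filling are each short once the right objects are in place.
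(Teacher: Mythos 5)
Your proposal is correct and follows essentially the same route as the paper: apply Mackay's quasi\-circle theorem to produce an $\alpha$-quasicircle through $x$ and $y$, use linear local contractibility to cone it off to a singular disk in $B(x,\lambda\alpha d(x,y))$, cut the boundary circle into two ``terminal'' arcs near $x$ and $y$ and two ``middle'' arcs, verify the four separation estimates, and reparametrize the disk as $[0,1]^2$. The only differences from the paper's argument are bookkeeping ones (the paper selects the arcs near $x$ and $y$ by a diameter condition and derives the middle-arc separation from the quasicircle condition applied to the terminal arcs, while you select explicit cut points via a ``last exit / first entry'' rule and run the separation estimate directly), so the two arguments are substantively identical.
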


\begin{proof}
Fix $x,y \in X$ distinct, and for notational ease let $r = d(x,y) >0$. By Theorem \ref{quasicircle}, there is $\alpha \geq 1$, depending only on $N$ and $\lambda'$, such that $x$ and $y$ can be joined by two $\alpha$-quasiarcs in $X$ for which the concatenation of the two is an $\alpha$-quasicircle. Let $\mathcal{C}$ denote this quasicircle, and observe that $\mathcal{C} \subset B(x, \alpha r)$.

Let $\gamma_1$ be a sub-arc of $\mathcal{C}$ with $\gamma_1 \subset B(x,r/4)$ and $\diam \gamma_1 \geq r/8$. Similarly, let $\gamma_1'$ be a sub-arc of $\mathcal{C}$ with $\gamma_1' \subset B(y,r/4)$ and $\diam \gamma_1' \geq r/8$. Then, let $\gamma_2$ and $\gamma_2'$ be the sub-arcs of $\mathcal{C}$ that connect $\gamma_1$ to $\gamma_1'$. Observe that by construction, $\dist(\gamma_1,\gamma_1') \geq r/2$, and by the quasicircle condition, 
$$\dist(\gamma_2,\gamma_2') \geq \frac{1}{\alpha} \min \{\diam \gamma_1, \diam \gamma_1' \} \geq \frac{r}{8\alpha}.$$

Suppose that $r = d(x,y) \leq (\diam X)/(\alpha \lambda)$. As $(X,d)$ is $\lambda$-linearly locally contractible, the ball $B(x, \alpha r)$ is contractible inside $B(x, \alpha \lambda r)$. Thus, there is a continuous map
$$H \colon [0,1] \times B(x, \alpha r) \rightarrow B(x,\alpha \lambda r)$$
for which $H_0 = \id$ and $H_1 \equiv \const$; here, we use the standard notation $H_t(z) = H(t,z)$ for $0\leq t \leq 1$ and $z \in B(x, \alpha r)$. We may restrict this homotopy to the quasicircle, and pre-composing with a parameterization of $\mathcal{C}$, we obtain
$$\tilde{H} \colon [0,1] \times \Sp^1 \rightarrow B(x,\alpha \lambda r),$$
where $\tilde{H}_0(\Sp^1) = \mathcal{C}$ and $\tilde{H}_1 \equiv \const$. This defines a continuous map $\tilde{g} \colon \cl{\D} \rightarrow B(x,\alpha \lambda r)$,
where $\D$ denotes the unit disk in $\R^2$, and $\tilde{g}|_{\Sp^1} = \tilde{H}_0 |_{\Sp^1}$ gives a parameterization of $\mathcal{C}$. More precisely, 
$$\tilde{g}(se^{i\theta}) = \tilde{H}(1-s,e^{i\theta})$$
for each $0 \leq s \leq 1$ and $0 \leq \theta \leq 2\pi$. Notice that $\gamma_1$, $\gamma_1'$, $\gamma_2$, and $\gamma_2'$ correspond to sub-arcs of $\Sp^1$ under this parameterization. It is straightforward to see that we may pre-compose $\tilde{g}$ with an appropriate homeomorphism from $[0,1]^2$ to $\cl{\D}$ to obtain a continuous map
$$g \colon [0,1]^2 \rightarrow B(x,\alpha \lambda r),$$
where $g(F_1) = \gamma_1$, $g(F_1') = \gamma_1'$, $g(F_2) = \gamma_2$, and $g(F_2') = \gamma_2'$. Thus, the map $g$ satisfies $g(F_1) \subset B(x,r/4)$ and $g(F_1') \subset B(y,r/4)$, along with the estimate $\dist(g(F_2),g(F_2')) \geq r/(8\alpha)$. This shows that $X$ admits fat connecting squares with constant 
$\max \{8 \alpha, \lambda \alpha \}$.
\end{proof}

Using Lemma \ref{LLC}(iii) and Corollary \ref{imagecube}, we obtain the following result.

\begin{corollary} \label{fatsquares}
Let $(X,d)$ be a closed, connected manifold of dimension $\geq 2$ that is $N$-doubling and $\lambda$-linearly locally contractible. Then $X$ admits fat connecting squares with constant depending only on $N$ and $\lambda$. In particular, $\mathcal{H}_2^{\infty}(B(x,r)) \gtrsim r^2$ for all balls with $0 < r \leq \diam X$, where the implicit constant again depends only on $N$ and $\lambda$.
\end{corollary}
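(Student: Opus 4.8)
The plan is to obtain the corollary by chaining the lemma immediately above (fat connecting squares from linear local contractibility and ALC), Lemma \ref{LLC}(iii), Definition \ref{fatcube}, and Corollary \ref{imagecube}, keeping track of all constants so that they depend only on $N$ and $\lambda$. First I would record the cheap hypotheses: a closed manifold is compact, hence complete, so the completeness assumption needed below is automatic, while $N$-doubling and $\lambda$-linear local contractibility are assumed outright. By Lemma \ref{LLC}(iii), applied with the definite choice $\lambda' = 2\lambda$, the space $X$ is $2\lambda$-ALC. The preceding lemma then applies with $\lambda' = 2\lambda$ and shows that $X$ admits fat connecting squares with some constant $\Lambda$; since that lemma's constant depends only on $N$, $\lambda$, and $\lambda'$, and $\lambda'$ has been pinned to $2\lambda$, the constant $\Lambda$ depends only on $N$ and $\lambda$. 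This is the first assertion of the corollary.

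Next I would pass from fat connecting squares to fat $2$-cubes in the sense of Definition \ref{fatcube}. Fix $x \in X$ and $0 < r \leq \diam X$. Since $X$ is compact and connected, the continuous function $z \mapsto d(x,z)$ has supremum at least $\diam X / 2$ (apply the triangle inequality to a diametral pair) and attains every value below it; as $\Lambda \geq 1$ and $r \leq \diam X$, there is therefore a point $y$ with $d(x,y) = r/(2\Lambda)$, and this $y$ satisfies $y \neq x$ and $d(x,y) \leq (\diam X)/\Lambda$, so the fat-connecting-squares property applies to the pair $x,y$. It produces a continuous map $g \colon [0,1]^2 \rightarrow B(x, \Lambda d(x,y)) = B(x, r/2) \subset B(x,r)$ with $\dist(g(F_k), g(F_k')) \geq d(x,y)/\Lambda = r/(2\Lambda^2)$ for $k = 1, 2$. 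Hence $X$ admits fat $2$-cubes with constant $2\Lambda^2$.

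Finally, the volume estimate follows by feeding this map into Corollary \ref{imagecube} and using monotonicity of Hausdorff content under inclusion: since $g([0,1]^2) \subset B(x,r)$,
$$\mathcal{H}_2^{\infty}(B(x,r)) \geq \mathcal{H}_2^{\infty}(g([0,1]^2)) \geq \prod_{k=1}^2 \dist(g(F_k), g(F_k')) \geq \lp \frac{r}{2\Lambda^2} \rp^{2} \gtrsim r^2,$$
with implicit constant depending only on $N$ and $\lambda$. I do not expect any individual step to be a real obstacle; the point that needs genuine care is the bookkeeping of constants --- one must fix $\lambda'$ as a definite multiple of $\lambda$ before invoking Lemma \ref{LLC}(iii), the preceding lemma, and (inside its proof) Theorem \ref{quasicircle}, so that no constant secretly depends on $x$, on $r$, or on an unbounded auxiliary parameter --- together with the mild analytic point that in the regime $r \approx \diam X$ one uses compactness and connectedness of $X$, not merely local connectivity, to produce the auxiliary point $y$ at exactly the prescribed distance.
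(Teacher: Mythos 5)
Your proposal is correct and follows essentially the same chain as the paper intends (Lemma \ref{LLC}(iii) to get ALC, the preceding lemma to get fat connecting squares, then Corollary \ref{imagecube} for the volume bound), with the extra service of spelling out the passage from fat connecting squares to fat $2$-cubes via the intermediate-value argument for choosing the auxiliary point $y$ — a step the paper leaves implicit but that genuinely does use compactness and connectedness as you note. The constant bookkeeping is done carefully and correctly.
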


The conclusion in this corollary recovers, for surfaces, the statement about lower volume bounds that we made in the previous section, based off of Semmes's theorem. A natural question is whether our method can be extended to higher dimensions. We leave this as an open problem.

\subsection{An application to quasi-spheres}

Let us return to the quasisymmetric uniformization result of Bonk and Kleiner in Theorem \ref{BKqs}. Earlier, those same authors had proved a uniformization theorem that holds in all dimensions, albeit with a strong dynamical hypothesis on the metric space.

\begin{theorem}[Bonk--Kleiner \cite{BK02}] \label{BK}
Let $(X,d)$ be a compact, Ahlfors $n$-regular metric space with topological dimension $n \geq 1$, and suppose that $X$ admits a uniformly quasi-M\"obius group action that is cocompact on triples. Then $X$ is quasisymmetrically equivalent to $\Sp^n$.
\end{theorem}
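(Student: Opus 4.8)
The plan is to convert the dynamical hypotheses into quantitative metric regularity and then invoke a uniformization criterion for $\Sp^n$. The first step is to extract from the action the three properties that drive every argument of this kind: that $X$ is connected, uniformly perfect, doubling, and, most importantly, \emph{quasi-self-similar}. The mechanism is that cocompactness on triples lets one, given any two balls $B(x,r)$ and $B(y,R)$, find a group element $h$ which, after a fixed normalization on three points, is uniformly quasisymmetric and carries $B(x,r)$ close to $B(y,R)$; thus the geometry of $X$ near any point at any scale agrees, up to bounded distortion, with its geometry near any other point at any other scale. Uniform perfectness and the doubling property follow from this homogeneity, and connectedness follows from minimality of the action together with $n \geq 1$.

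The second step is to upgrade this to linear local connectivity: one shows $X$ is $\lambda$-$\LLC_1$ and $\lambda$-$\LLC_2$ for some $\lambda$. Here the self-similarity is essential — a pair of nearby points is transported by the group to a unit-scale configuration, joined there by a continuum of controlled diameter (compactness and connectedness produce some a priori connecting continuum, and the action makes the bound scale-invariant), then transported back; this bootstrapping is in the same spirit as the proofs of Lemma \ref{ALC} and Lemma \ref{LLC}(i). At this point $X$ is a compact, connected, Ahlfors $n$-regular, LLC, quasi-self-similar metric space of topological dimension $n$, which is the setting in which one can hope to recognize $\Sp^n$.

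The third step, and the main obstacle, is this recognition. For $n=1$ it is comparatively soft: a compact connected Ahlfors $1$-regular LLC space of topological dimension $1$ has finite length, minimality of the action rules out endpoints and ramification points, so $X$ is a topological circle that is doubling and of bounded turning, hence quasisymmetric to $\Sp^1$ by Tukia--V\"ais\"al\"a. For $n \geq 2$ one must first show that the combination of topological dimension $n$, Ahlfors $n$-regularity, the LLC property, and quasi-self-similarity forces $X$ to be homeomorphic to $\Sp^n$ — a geometric-topology argument in which cohomological dimension and Alexander duality (used much as in the homological manipulations in the proof of Lemma \ref{LLC}) show $X$ is a cohomology $n$-manifold, and the single local model provided by homogeneity pins down the sphere — and then produce a \emph{quasisymmetric} parameterization. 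The genuine difficulty is this last point: a topological $n$-sphere with an innocuous-looking metric need not be a quasi-sphere, as genuinely fractal examples of Hausdorff dimension exceeding $n$ show, so one must exploit that Ahlfors $n$-regularity forces the conformal dimension of $X$ to be exactly $n$. Concretely I would pass to a weak tangent of $X$ at a generic point and scale — it inherits Ahlfors $n$-regularity and LLC — prove that this tangent is $n$-Loewner (equivalently, that it carries enough nondegenerate curve families; this modulus estimate is the technical heart of the whole proof and the step I expect to fight hardest for), identify it with $\R^n$ via Loewner uniformization, and then spread the resulting quasisymmetry across all of $X$ by means of the group action, obtaining a quasisymmetric equivalence $X \simeq \Sp^n$.
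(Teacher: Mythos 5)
The paper does not prove this theorem; it is stated as Theorem~\ref{BK} purely as a citation to Bonk--Kleiner's paper \cite{BK02}, quoted in order to motivate the subsequent discussion of removing dynamical hypotheses in favor of connectivity hypotheses. So there is no internal proof to compare against, and your sketch should be judged against the original reference.

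Your overall framing (bootstrap self-similarity, doubling, uniform perfectness and connectivity from cocompactness on triples; upgrade to LLC; then exploit the tension between topological dimension $n$ and Ahlfors $n$-regularity) is faithful to the beginning of the Bonk--Kleiner argument, and the first two steps are essentially what they do. But the third step departs from their actual route in a way that matters. You propose to first show that $X$ is a cohomology $n$-manifold homeomorphic to $\Sp^n$, then pass to a weak tangent, prove it is $n$-Loewner, uniformize by Loewner theory, and transport the parameterization back via the group. That is not how \cite{BK02} proceeds, and it would be considerably harder: establishing a Loewner estimate or even just rectifiability from these hypotheses alone is precisely the kind of thing that in general fails (and which makes the $n\geq 3$ landscape of \cite{Sem96g} so subtle). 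The mechanism in \cite{BK02} is instead: the topological dimension hypothesis, together with a Lipschitz partition of unity argument, produces a Lipschitz map $f\colon X\to \R^n$ whose image has positive $n$-measure, and Ahlfors $n$-regularity controls the fibers. Self-similarity is then used to extract a \emph{weak tangent} $W$ of $X$ on which a rescaled version of $f$ becomes a bi-Lipschitz homeomorphism onto $\R^n$; one never verifies a Loewner condition and never needs an independent proof that $X$ is a topological sphere. The quasisymmetric parameterization of $X$ by $\Sp^n$ is then obtained by ``spreading out'' the bi-Lipschitz chart on the tangent using the cocompact quasi-M\"obius action. In short: your proposal would likely get stuck at the modulus estimate you yourself flag as the hardest step, whereas the actual proof sidesteps modulus and Loewner entirely by going directly for a bi-Lipschitz weak tangent. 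Note also that the claim ``connectedness follows from minimality'' needs care: Bonk--Kleiner handle disconnected and degenerate cases by separate reductions, not as a consequence of minimality.
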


Let us not discuss quasi-M\"obius group actions here but instead mention that the hypothesis imposes strong self-similarity properties on $X$. Comparing Theorem \ref{BKqs} to the $n=2$ case of Theorem \ref{BK}, we see that for surfaces, one can essentially drop the assumption of self-similarity, replacing it by the much weaker linear local contractibility condition, and still obtain the same conclusion regarding quasisymmetric uniformization.

One might be tempted to ask whether this phenomenon occurs in all dimensions. Namely, if $(X,d)$ is homeomorphic to $\Sp^n$, is linearly locally contractible, and is Ahlfors $n$-regular, is it necessarily quasisymmetric to the Euclidean sphere? The answer is ``no" for $n \geq 3$, by counterexamples due to S.~Semmes \cite{Sem96g}, even though such spaces have good analytic properties \cite{Sem96}. See the Introduction of \cite{BK02qs} for further discussion.

In \cite{Kin14}, the present author built off of the methods in \cite{BK02} to obtain a statement similar to that in Theorem \ref{BK} for metric spaces $X$ whose topological dimension and Ahlfors-regular dimension do not coincide. More specifically, this result gives conditions under which a metric space is bi-Lipschitz equivalent to the ``snowflake" of a metric space with equivalent topological and Ahlfors-regular dimensions. 

\begin{theorem}[{\cite[Proposition 4.1]{Kin14}}] \label{desnow}
Let $n \geq 2$ and $0<\e < 1$, and suppose that the metric space $(X,d)$ has the following properties:
\begin{enumerate}
\item[\textup{(i)}] $X$ is homeomorphic to $\Sp^n$;
\item[\textup{(ii)}] $X$ admits a conformal elevator;
\item[\textup{(iii)}] every $\delta$-separated set in $X$ has size at most $C\delta^{-n/\e}$;
\item[\textup{(iv)}] every discrete $\delta$-path from $x$ to $y$ has length at least $C^{-1} (d(x,y)/\delta)^{1/\e}$.
\end{enumerate}
Then there is a metric $\rho$ on $X$ satisfying $\rho \approx d^{1/\e}$, where the implicit constant depends only on the constants from the hypotheses.
\end{theorem}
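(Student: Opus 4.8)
The plan is to produce $\rho$ by the standard chain (Frink-type) construction applied to the function $(x,y)\mapsto d(x,y)^{1/\e}$: put
$$\rho(x,y)=\inf\left\{\sum_{l=1}^{m}d(z_{l-1},z_l)^{1/\e}\ :\ x=z_0,z_1,\dots,z_m=y\right\},$$
which can equivalently be phrased by weighting a fine open cover $\{U_i\}$ of $X$ by $w(i)=(\diam U_i)^{1/\e}$ and taking $\dist_w$ in the sense of Section~\ref{introsec}. In either form $\rho$ is automatically a pseudometric with $\rho(x,y)\le d(x,y)^{1/\e}$ (use the one-step chain), so the upper bound $\rho\lesssim d^{1/\e}$ is free, and one checks easily that any metric comparable to $d^{1/\e}$ must be comparable to this $\rho$. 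Hence everything reduces to the matching lower bound $\rho(x,y)\ge C^{-1}d(x,y)^{1/\e}$, which also shows $\rho$ is a genuine metric. Equivalently, I must prove a \emph{no-shortcut} estimate: every chain between $x$ and $y$ has $d^{1/\e}$-length at least a fixed multiple of $d(x,y)^{1/\e}$.

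The base case of this estimate is exactly hypothesis (iv): a chain all of whose steps have length $\le\delta$ is a discrete $\delta$-path, so it has at least $C^{-1}(d(x,y)/\delta)^{1/\e}$ steps, whence its $d^{1/\e}$-length is at least $C^{-1}(d(x,y)/\delta)^{1/\e}\cdot\delta^{1/\e}=C^{-1}d(x,y)^{1/\e}$, once one knows the steps are, on average, not much smaller than $\delta$; this last point is controlled by hypothesis (iii), since an efficient (non-backtracking, roughly $\delta$-separated) chain at scale $\delta$ cannot have more than $C\delta^{-n/\e}$ steps. For a general chain I would decompose its steps dyadically by scale $2^{-j}d(x,y)$ and bound $\rho$ from below by the weighted count $\sum_j\#\{\text{steps at scale }j\}\cdot 2^{-j/\e}$. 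The combinatorial length–volume inequality (Theorem~\ref{combcube}, or the weighted version underlying Corollary~\ref{imagecube}) enters precisely here: since $X$ is homeomorphic to $\Sp^n$, any sub-collection of the chain that genuinely "crosses" from $x$ to $y$ can be thickened and completed to an open cover of a topological cube $Q\subset X$ with $x$ and $y$ on opposite codimension-$1$ faces, and the inequality then forces that cover — hence the original chain — to carry a definite combinatorial mass \emph{at each scale}. Together with (iii) this rules out the only dangerous configuration, namely a chain making essentially all of its progress through a "dust" of arbitrarily fine steps. Finally, the conformal elevator of hypothesis (ii) is what makes the argument scale-invariant: it lets me transport the estimate from a reference pair at unit distance to an arbitrary pair $x,y$, keeping the constant $C$ uniform.

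I expect the main obstacle to be exactly this multi-scale bookkeeping: splitting an arbitrary chain into fine "runs" joined by coarser jumps, applying (iv) on each run, invoking the Derrick-type inequality for the crossing geometry of $Q$, using (iii) to cap the number of fine steps, and then showing that the scale-by-scale contributions add up to at least a fixed multiple of $d(x,y)^{1/\e}$. A secondary technical nuisance is the mismatch between the chain construction, which behaves like an $\ell^1$-sum, and the quasi-Möbius nature of the conformal elevator, which distorts distances while respecting cross-ratios, so some care is needed to keep the reduction to a single scale clean. Once these points are settled, assembling $\rho$ and verifying $\rho\approx d^{1/\e}$ is routine.
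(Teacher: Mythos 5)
Note first that this paper does not give a proof of Theorem~\ref{desnow}: it is quoted from \cite[Proposition~4.1]{Kin14}, and the only thing the paper records about that proof is that hypotheses (i) and (ii) are used \emph{precisely to verify that $X$ admits fat connecting $n$-cubes}, after which the argument is carried by (iii), (iv), and the unweighted combinatorial Derrick inequality (Theorem~\ref{combcube}). Your sketch can therefore only be measured against that hint, and against it you have the right ingredients but leave the central step open.

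The base case does not close. You observe yourself that (iv) yields $m\,\delta^{1/\e}\gtrsim d(x,y)^{1/\e}$ only ``once one knows the steps are, on average, not much smaller than $\delta$,'' and then claim (iii) supplies this. It does not: (iii) is an \emph{upper} bound on the cardinality of $\delta$-separated sets, so it caps how many essentially distinct $\delta$-scale positions a chain can occupy, but it says nothing to prevent a chain from taking one $\delta$-step followed by a flood of steps of size $\delta/1000$ — a configuration that is sparse at scale $\delta$, satisfies (iv) at scale $\delta$, and has negligible $d^{1/\e}$-length. The ``multi-scale bookkeeping'' you flag as a secondary nuisance is in fact the entire content of the lower bound, and what you offer — a dyadic split of the step sizes, a thickened cover of a fat cube, and an appeal to the Derrick inequality at each scale — is a plan rather than an argument: you never actually produce the open cover, never show it covers the image of the cube map, and never verify the scale-by-scale combinatorial separations that Theorem~\ref{combcube} requires as input. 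A smaller misalignment is that you place the conformal elevator in the role of transporting a unit-scale estimate across scales, whereas the paper's description is that (i) and (ii) are consumed once, at the outset, to manufacture the fat connecting $n$-cube map, after which the estimate is driven purely by (iii), (iv), and the combinatorial inequality with no further appeal to the dynamics.
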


Here, a ``$\delta$-separated set" is simply a set of points for which pairwise distances are at least $\delta$. Also, by a ``discrete $\delta$-path from $x$ to $y$" we mean a chain of points $x=z_0, z_1, \ldots, z_m = y$ in $X$ with $d(z_i,z_{i-1}) \leq \delta$ for each $i$. The length of this chain is $m$, which is one less than the number of points in the chain. The hypothesis of ``admitting a conformal elevator" is a dynamical condition, akin to the assumption that $X$ admits a uniformly quasi-M\"obius group action that is cocompact on triples. For general metric spaces, this is a very strong hypothesis. Thus, in the same spirit that motivated Theorem \ref{BKqs}, it makes sense to ask whether this assumption is necessary.

A close inspection of the proof of Theorem \ref{desnow} in \cite{Kin14} reveals that the first two hypotheses are used \ti{precisely to verify} that $X$ admits fat connecting $n$-cubes. Thus, we may replace (i) and (ii) by this connectivity assumption. Once this is done, it is also not difficult to modify that proof to deal with metric spaces that are possibly unbounded. Putting these observations together gives the following result.

\begin{prop} \label{nogroup}
Let $n \geq 2$ and $0<\e<1$, and suppose that $(X,d)$ is a metric space for which
\begin{enumerate}
\item[\textup{(i')}] $X$ admits fat connecting $n$-cubes with constant $\lambda$,
\item[\textup{(ii')}] every ball of radius $0 < R \leq \diam X$ can be covered by at most $C (R/r)^{n/\e}$ balls of radius $0 < r < R$,
\item[\textup{(iii')}] every discrete $\delta$-path from $x$ to $y$ has length at least $C^{-1} (d(x,y)/\delta)^{1/\e}$.
\end{enumerate}
Then there is a metric $\rho$ on $X$ satisfying $\rho \approx d^{1/\e}$, where the implicit constant depends only on $\e$, $\lambda$, and $C$.
\end{prop}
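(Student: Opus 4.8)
The plan is to follow the proof of Theorem \ref{desnow} in \cite{Kin14} (Proposition~4.1 there), with three modifications: replace hypotheses (i)--(ii) by (i'), replace (iii) by its scale-local form (ii'), and replace the standing compactness assumption of \cite{Kin14} by a bookkeeping of scales that tolerates $\diam X = \infty$. Recall the shape of that proof. One builds a candidate metric $\rho$ on $X$ by a discrete chain construction: $\rho(x,y)$ is an infimum, over chains $x = z_0, z_1, \ldots, z_m = y$, of a cost that behaves like $\sum_j d(z_{j-1},z_j)^{1/\e}$ at small scales but is suitably ``thickened'' by the combinatorial data of the space, so that it survives subdivision of steps. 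Symmetry and the triangle inequality for $\rho$ are then formal consequences of the chain definition, and the content lies in the two comparisons $\rho \gtrsim d^{1/\e}$ and $\rho \lesssim d^{1/\e}$: the lower one (which also yields positivity of $\rho$) is precisely the combinatorial length bound (iv)~$=$~(iii'), while the upper one amounts to producing, between any $x$ and $y$, chains that are as efficient as those available in the model snowflake $(\R^n, |\cdot|^{\e})$, where indeed $\rho \approx |\cdot|$.

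The substitutions are justified as follows. First, in \cite{Kin14} the assumptions that $X$ is homeomorphic to $\Sp^n$ and admits a conformal elevator are used \emph{only} to produce fat connecting $n$-cubes with constant controlled by the data; verifying this requires re-examining that proof to confirm that the sphere topology (used, for instance, to force a certain map to have nonzero degree, hence to be onto) and the elevator (used for rescaling) contribute nothing beyond that conclusion. Granting it, (i') is a direct replacement. Second, hypothesis (iii) of \cite{Kin14} enters only through the consequence that a ball of radius $R$ meets at most $\sim (R/r)^{n/\e}$ pairwise-disjoint $r$-balls; for bounded $X$ this is equivalent to (iii), but (ii') is the formulation that remains meaningful, and true of the model snowflake, when $\diam X = \infty$. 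Third, (iii') is (iv) verbatim.

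With these in place the construction proceeds as before. For the upper comparison one fixes $x,y$, writes $R = d(x,y)$, and invokes (i') to obtain a fat connecting $n$-cube $g \colon [0,1]^n \to B(x,\lambda R)$ with $g(F_1)$ near $x$, $g(F_1')$ near $y$, and $\dist(g(F_k),g(F_k')) \ge R/\lambda$ for $k \geq 2$. Covering $g([0,1]^n) \subset B(x,\lambda R)$ by balls of a fixed radius, pulling the cover back through $g$ to a cover of $[0,1]^n$, and applying the combinatorial length--volume inequality (Theorem \ref{combcube}, or its weighted refinement Theorem \ref{LV} through Corollary \ref{imagecube} and Proposition \ref{cor}), one bounds the product of the $n$ combinatorial distances between opposite faces by the number of balls: the ``longitudinal'' distance dominates the combinatorial distance from $x$ to $y$ at the chosen scale, the $n-1$ ``transverse'' distances are bounded below via the fatness $R/\lambda$, and the ball count is bounded above by (ii'). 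Iterating this estimate dyadically over scales from $R$ down to $\delta$ produces a chain from $x$ to $y$ of combinatorial length $\lesssim (R/\delta)^{1/\e}$, hence $\rho(x,y) \lesssim R^{1/\e}$. Since (i'), (ii'), (iii') are stated scale-invariantly, the per-scale constants are uniform; this simultaneously closes the dyadic iteration and removes any need to rescale, so $\diam X = \infty$ causes no difficulty.

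The main obstacle is the per-scale efficient-chain estimate together with its dyadic iteration: one must arrange that the accumulated cost is $\lesssim (R/\delta)^{1/\e}$ and no larger power, which forces the doubling exponent $n/\e$ in (ii'), the fatness lower bounds on the transverse combinatorial distances, and the length--volume inequality to balance precisely. This is the genuinely analytic point, and it is exactly what is carried out in \cite{Kin14}, now supplied with the fat-cube hypothesis. The remaining, more mechanical difficulties -- confirming that (i)--(ii) of \cite{Kin14} are fully encapsulated by ``admits fat connecting $n$-cubes'' and rewriting every estimate with explicit scale-dependence for unbounded $X$ -- I would handle second, after the analytic step, since the scale-invariance of the hypotheses makes the bookkeeping routine.
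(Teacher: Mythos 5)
Your proposal is correct and follows the same route the paper takes: the paper proves Proposition \ref{nogroup} by observing that in the proof of Theorem \ref{desnow} from \cite{Kin14}, hypotheses (i)--(ii) are used only to establish that $X$ admits fat connecting $n$-cubes (so they may be replaced by (i')), and then noting that the remaining argument goes through for unbounded $X$ once the doubling-type hypothesis is phrased in the scale-invariant form (ii'). Your account simply fills in more detail of the underlying \cite{Kin14} construction than the paper itself spells out, and that detail is consistent with the intended argument.
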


The condition in (ii') implies that $X$ has Assouad dimension at most $2/\e$. It is not difficult to show that if $X$ is compact and is Ahlfors $2/\e$-regular, then it automatically satisfies this hypothesis. Specializing the previous proposition to metric surfaces and using Corollary \ref{fatsquares}, we obtain the following corollary, which builds off of Theorem \ref{BKqs}.

\begin{corollary} \label{nogroupcor}
Let $(X,d)$ be a closed metric surface that is linearly locally contractible. Suppose that $X$ is Ahlfors $2/\e$-regular and that every discrete $\delta$-path from $x$ to $y$ in $X$ has length at least $C^{-1} (d(x,y)/\delta)^{1/\e}$. Then there is a metric $\rho$ on $X$ for which $\rho \approx d^{1/\e}$. If, in addition, $X$ is homeomorphic to $\Sp^2$, then $X$ is a quasi-sphere.
\end{corollary}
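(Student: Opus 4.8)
The plan is to verify, for a closed (connected) metric surface $(X,d)$ as in the statement, the three hypotheses of Proposition \ref{nogroup} in the case $n=2$, which produces the metric $\rho$, and then---under the additional assumption that $X$ is homeomorphic to $\Sp^2$---to transport the hypotheses of Theorem \ref{BKqs} to the ``de-snowflaked'' space $(X,\rho)$, apply that theorem, and splice together the resulting quasisymmetries. First I would check (i')--(iii') of Proposition \ref{nogroup}. Hypothesis (iii') is assumed outright. Hypothesis (ii')---that every ball of radius $R\leq\diam X$ is covered by $C(R/r)^{2/\e}$ balls of radius $r$---follows from Ahlfors $(2/\e)$-regularity by the usual packing argument: a maximal $r$-separated subset of $B(x,R)$ has cardinality $\lesssim(R/r)^{2/\e}$, since the disjoint balls of radius $r/2$ about its points lie in $B(x,2R)$, each of measure $\approx r^{2/\e}$ while $B(x,2R)$ has measure $\approx R^{2/\e}$; the same argument shows $X$ is $N$-doubling with $N$ depending only on the regularity constant. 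For (i'), observe that $X$ is a closed connected manifold of dimension $2\geq 2$, is $N$-doubling, and is $\lambda$-linearly locally contractible, so Corollary \ref{fatsquares} applies and shows that $X$ admits fat connecting squares---equivalently, fat connecting $2$-cubes---with constant depending only on $N$ and $\lambda$. Proposition \ref{nogroup} then yields a metric $\rho$ on $X$ with $\rho\approx d^{1/\e}$; this is the first conclusion.

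For the second conclusion, assume now that $X\cong\Sp^2$. The comparison $\rho\approx d^{1/\e}$ means that $(X,d)$ is bi-Lipschitz to $(X,\rho^{\e})$, the $\e$-snowflake of $(X,\rho)$; in particular $\rho$ and $d$ induce the same topology, so $(X,\rho)$ is also homeomorphic to $\Sp^2$. Since a $d$-ball of radius $r$ is sandwiched between two $\rho$-balls of radii comparable to $r^{1/\e}$, and conversely, the $\lambda$-linear local contractibility of $(X,d)$ transfers to linear local contractibility of $(X,\rho)$, with a constant depending only on $\lambda$ and $\e$. Moreover $\diam_d A\approx(\diam_\rho A)^{\e}$ for every $A\subset X$, so the $(2/\e)$-dimensional Hausdorff measure and content of subsets of $(X,d)$ are comparable to the $2$-dimensional Hausdorff measure and content of the same sets in $(X,\rho)$; combined with the comparison of balls, this shows that $(X,\rho)$ is Ahlfors $2$-regular. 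Thus $(X,\rho)$ is homeomorphic to $\Sp^2$, Ahlfors $2$-regular, and linearly locally contractible, so Theorem \ref{BKqs} furnishes a quasisymmetric homeomorphism $h\colon\Sp^2\to(X,\rho)$.

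It remains to return to the original metric. The identity map $(X,\rho)\to(X,\rho^{\e})$ is an $\eta$-quasisymmetry with $\eta(t)=t^{\e}$ (here $0<\e<1$), and the identity map $(X,\rho^{\e})\to(X,d)$ is bi-Lipschitz, hence quasisymmetric; composing these two maps with $h$ exhibits $(X,d)$ as a quasisymmetric image of $\Sp^2$, i.e., $X$ is a quasi-sphere.

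The step I expect to be the main obstacle is the transport of hypotheses across the comparison $\rho\approx d^{1/\e}$: verifying that $(X,\rho)$ is Ahlfors $2$-regular and linearly locally contractible from the corresponding facts for $(X,d)$ with the shifted exponent $2/\e$. None of this is deep, but it requires careful tracking of how balls, diameters, Hausdorff content, and all the comparability constants behave under passage to $\e$-th and $(1/\e)$-th powers, so that the final constants stay controlled. A lesser point is deducing hypothesis (ii') of Proposition \ref{nogroup} from Ahlfors $(2/\e)$-regularity with the correct exponent.
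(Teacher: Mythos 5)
Your proposal is correct and matches the paper's intended argument: the paper obtains $\rho$ by applying Proposition \ref{nogroup} (whose hypothesis (i') is supplied by Corollary \ref{fatsquares} and (ii') by the doubling consequence of Ahlfors $2/\e$-regularity), and then deduces the quasi-sphere conclusion exactly as you do, by observing that $(X,\rho)$ is Ahlfors $2$-regular and linearly locally contractible, applying Theorem \ref{BKqs}, and noting that the identity $(X,\rho)\to(X,d)$ is quasisymmetric via the snowflake comparison $\rho\approx d^{1/\e}$.
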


The final statement in this corollary uses the facts that $(X,\rho)$ is Ahlfors 2-regular and that the identity map between $(X,d)$ and $(X,\rho)$ is quasisymmetric.

\begin{bibdiv}
\begin{biblist}

\bib{BK02qs}{article}{
   author={Bonk, M.},
   author={Kleiner, B.},
   title={Quasisymmetric parametrizations of two-dimensional metric spheres},
   journal={Invent. Math.},
   volume={150},
   date={2002},
   number={1},
   pages={127--183},
   issn={0020-9910},
}

\bib{BK02}{article}{
      author={Bonk, M.},
      author={Kleiner, B.},
       title={Rigidity for quasi-{M}\"obius group actions},
        date={2002},
        ISSN={0022-040X},
     journal={J. Differential Geom.},
      volume={61},
      number={1},
       pages={81\ndash 106},
         url={http://projecteuclid.org/getRecord?id=euclid.jdg/1090351321},
}

\bib{BZ88}{book}{
   author={Burago, Y.~D.},
   author={Zalgaller, V.~A.},
   title={Geometric inequalities},
   publisher={Springer-Verlag, Berlin},
   date={1988},
   pages={xiv+331},
   isbn={3-540-13615-0},
}

\bib{Der71}{article}{
   author={Derrick, W.~R.},
   title={Inequalities for $p$-modules of curve families on Lipschitz
   surfaces},
   journal={Math. Z.},
   volume={119},
   date={1971},
   pages={1--10},
   issn={0025-5874},
}

\bib{Der69}{article}{
      author={Derrick, W.~R.},
       title={A volume-diameter inequality for {$n$}-cubes},
        date={1969},
        ISSN={0021-7670},
     journal={J. Analyse Math.},
      volume={22},
       pages={1\ndash 36},
}

\bib{Der68}{article}{
   author={Derrick, W.~R.},
   title={A weighted volume-diameter inequality for $N$-cubes},
   journal={J. Math. Mech.},
   volume={18},
   date={1968/1969},
   pages={453--472},
}

\bib{Grom83}{article}{
   author={Gromov, M.},
   title={Filling Riemannian manifolds},
   journal={J. Differential Geom.},
   volume={18},
   date={1983},
   number={1},
   pages={1--147},
   issn={0022-040X},
}

\bib{Grom99}{book}{
      author={Gromov, M.},
       title={Metric structures for {R}iemannian and non-{R}iemannian spaces},
      series={Progress in Mathematics},
   publisher={Birkh\"auser Boston},
     address={Boston, MA},
        date={1999},
      volume={152},
        ISBN={0-8176-3898-9},
}

\bib{Hat02}{book}{
   author={Hatcher, A.},
   title={Algebraic topology},
   publisher={Cambridge University Press, Cambridge},
   date={2002},
   pages={xii+544},
   isbn={0-521-79160-X},
   isbn={0-521-79540-0},
}

\bib{Hein05}{book}{
   author={Heinonen, J.},
   title={Lectures on Lipschitz analysis},
   series={Report. University of Jyv\"askyl\"a Department of Mathematics and
   Statistics},
   volume={100},
   publisher={University of Jyv\"askyl\"a, Jyv\"askyl\"a},
   date={2005},
   pages={ii+77},
   isbn={951-39-2318-5},
}

\bib{KapB01}{article}{
      author={Kapovich, I.},
      author={Benakli, N.},
       title={Boundaries of hyperbolic groups},
       conference={
       	title={Combinatorial and geometric group theory},
  }
  book={
  	series={Contemp. Math.}
	volume={296}
	publisher={Amer. Math. Soc., Providence, RI}
  },
  date={2002},
  pages={39\ndash 93},
}

\bib{Kinphd}{book}{
   author={Kinneberg, K.},
   title={A coarse entropy-rigidity theorem and discrete length-volume inequalities},
   note={Thesis (Ph.D.)--UCLA},
   publisher={ProQuest LLC, Ann Arbor, MI},
   date={2014},
}

\bib{Kin14}{article}{
   author={Kinneberg, K.},
   title={Rigidity for quasi-M\"obius actions on fractal metric spaces},
   journal={J. Differential Geom.},
   volume={100},
   date={2015},
   number={2},
   pages={349--388},
}

\bib{Kle06}{article}{
   author={Kleiner, B.},
   title={The asymptotic geometry of negatively curved spaces:
   uniformization, geometrization and rigidity},
   conference={
      title={Proceedings Internat. Congress Math. (Madrid, 2006)},
   },
   book={
      publisher={Eur. Math. Soc., Z\"urich},
   },
   date={2006},
   pages={743--768},
}

\bib{Mack08}{article}{
   author={Mackay, J.~M.},
   title={Existence of quasi-arcs},
   journal={Proc. Amer. Math. Soc.},
   volume={136},
   date={2008},
   number={11},
   pages={3975--3981},
   issn={0002-9939},
}

\bib{Mack13}{article}{
   author={Mackay, J.~M.},
   title={Quasi-circles through prescribed points},
   journal={Indiana Univ. Math. J.},
   volume={63},
   date={2014},
   number={2},
   pages={403--417},
}

\bib{Mack10}{article}{
   author={Mackay, J.~M.},
   title={Spaces and groups with conformal dimension greater than one},
   journal={Duke Math. J.},
   volume={153},
   date={2010},
   number={2},
   pages={211--227},
   issn={0012-7094},
}

\bib{Pu52}{article}{
   author={Pu, P.~M.},
   title={Some inequalities in certain nonorientable Riemannian manifolds},
   journal={Pacific J. Math.},
   volume={2},
   date={1952},
   pages={55--71},
   issn={0030-8730},
}

\bib{Sch93}{article}{
   author={Schramm, O.},
   title={Square tilings with prescribed combinatorics},
   journal={Israel J. Math.},
   volume={84},
   date={1993},
   number={1--2},
   pages={97--118},
   issn={0021-2172},
}

\bib{Sem96}{article}{
   author={Semmes, S.},
   title={Finding curves on general spaces through quantitative topology,
   with applications to Sobolev and Poincar\'e inequalities},
   journal={Selecta Math. (N.S.)},
   volume={2},
   date={1996},
   number={2},
   pages={155--295},
   issn={1022-1824},
}

\bib{Sem96g}{article}{
   author={Semmes, S.},
   title={Good metric spaces without good parameterizations},
   journal={Rev. Mat. Iberoamericana},
   volume={12},
   date={1996},
   number={1},
   pages={187--275},
   issn={0213-2230},
}

\bib{Spa81}{book}{
   author={Spanier, E.},
   title={Algebraic topology},
   note={Corrected reprint},
   publisher={Springer-Verlag, New York-Berlin},
   date={1981},
   pages={xvi+528},
}

\bib{Wild10}{article}{
   author={Wildrick, K.},
   title={Quasisymmetric structures on surfaces},
   journal={Trans. Amer. Math. Soc.},
   volume={362},
   date={2010},
   number={2},
   pages={623--659},
   issn={0002-9947},
}

\end{biblist}
\end{bibdiv}

\end{document}